\newcommand\E{{\mathbb E}}
\newcommand\Ex{{\mathbb E}}
\newcommand\F{{\mathcal F}}
\newcommand\Prob{{\mathbb P}}
\newcommand\chitilde{\tilde{\chi}}
\newcommand\Normal{{\mathcal N}}
\newcommand\n{{\mathbf n}}
\newcommand\cA{{\mathcal A}}
\newcommand\cB{{\mathcal B}}
\newcommand\cD{{\mathcal D}}
\newcommand\cM{{\mathcal M}}
\newcommand\cN{{\mathcal N}}
\newcommand\cP{{\mathcal P}}
\newcommand\Z{{\mathbb Z}}
\newcommand\R{{\mathbb R}}
\newcommand\C{{\mathbb C}}
\newcommand\dto{\overset{d}{\to }}
\newcommand\lf{\langle}
\newcommand\rt{\rangle}
\newcommand\one{{\bf 1}}
\newcommand\bra[1]{\langle #1 \rangle}
\newcommand\as{\text{ as }}
\DeclareMathOperator{\Gam}{Gamma}
\DeclareMathOperator{\Tr}{Tr}
\DeclareMathOperator{\Image}{Im}
\DeclareMathOperator{\Var}{Var}
\DeclareMathOperator{\Cov}{Cov}
\DeclareMathOperator{\interior}{int}
\DeclareMathOperator{\Sine}{Sine}
\newtheorem{theorem}{Theorem}[section]
\newtheorem{corollary}[theorem]{Corollary}
\newtheorem{lemma}[theorem]{Lemma}
\theoremstyle{definition}
\newtheorem{definition}[theorem]{Definition}
\theoremstyle{remark}
\newtheorem{remark}[theorem]{Remark}
\begin{document}

\title{Gaussian beta ensembles at high temperature: eigenvalue fluctuations and bulk statistics \thanks{This work is partially supported by JSPS KAKENHI Grant Numbers JP16K17616(T.K.D) and JP26400145(F.N.)}
}

\author{Trinh Khanh Duy
 \footnote{Institute of Mathematics for Industry, Kyushu University, Japan. Email: trinh@imi.kyushu-u.ac.jp} 
\and Fumihiko Nakano 
\footnote{Department of Mathematics, Gakushuin University, Japan. Email: fumihiko@math.gakushuin.ac.jp}}

\maketitle

\begin{abstract}
We study the limiting behavior of Gaussian beta ensembles in the regime where  $\beta n = const$ as $n \to \infty$. The results are (1) Gaussian fluctuations for linear statistics of the eigenvalues, and (2) Poisson convergence of the bulk statistics. 
(2) is an alternative proof of the result by F.~Benaych-Georges and S.~P\'ech\'e (2015) with the explicit form of the intensity measure. 

\medskip

	\noindent{\bf Keywords:} Gaussian beta ensembles; high temperature; random Jacobi matrices; global fluctuations; bulk statistics; Poisson statistics; 
	
\medskip
	
	\noindent{\bf AMS Subject Classification: } Primary 60B20; Secondary 82B44, 60F05,  60G55
\end{abstract}

\section{Introduction}
\subsection{Background}
Gaussian beta ensembles 
(G$\beta$E) are the ensembles of points on the real line with the joint density function given by 
\begin{equation}\label{GbE}
	(\lambda_1, \dots, \lambda_n) 
	\propto 
	|\Delta(\lambda)|^\beta 
	e^{- \frac{1}{2}(\lambda_1^2 + \cdots + \lambda_n^2)} d\lambda,  
\end{equation}
where 
$\Delta(\lambda) = \prod_{i < j} (\lambda_j - \lambda_i)$ 
denotes the Vandermonde determinant. 
They are 
generalizations of the well-known Gaussian Orthogonal/Unitary/Symplectic Ensembles, and can also be viewed as the equilibrium measure of a one dimensional Coulomb log-gas at the inverse temperature 
$\beta$.

Dumitriu and Edelman \cite{DE02} introduced a matrix model whose eigenvalues obey  G$\beta$E~\eqref{GbE}. 
It is the ensemble of finite symmetric tridiagonal matrices, called Jacobi matrices, with independent entries distributed as 
\[
	T_{n, \beta} =\begin{pmatrix}
		\Normal(0,1) 	&\chitilde_{(n - 1)\beta}	\\
		\chitilde_{(n - 1)\beta}	&\Normal(0,1)		&\chitilde_{(n - 2)\beta}	\\
							&\ddots			&\ddots				&\ddots\\
							&				&\chitilde_\beta			&\Normal(0,1)
	\end{pmatrix}, 
\]
where 
$\Normal(\mu, \sigma^2)$ 
denotes the Gaussian distribution with mean 
$\mu$ 
and variance 
$\sigma^2$, 
and 
$\chitilde_k$ ($k>0$) 
denotes the 
$(1/\sqrt{2})$-chi distribution with 
$k$ 
degrees of freedom or equivalently the square root of the gamma distribution 
$\Gam(k/2,1)$. 

For fixed $\beta$, 
there are many papers on 
G$\beta$E and $T_{n, \beta}$
(e.g., 
the convergence and fluctuations around the semi-circle distribution of the empirical measures 
\cite{DE06,Johansson98}, 
the convergence and fluctuations around the semi-circle distribution of the spectral measures
\cite{Duy2016}, 
edge scaling limit 
\cite{Ramirez-Rider-Virag-2011}, 
bulk scaling limit 
\cite{Valko-Virag-2009}, 
and 
a central limit theorem (CLT for short) for the log-determinant \cite{Duy-RIMS}).

The aim 
of this paper is to study the limiting behavior of the spectra of Gaussian beta ensembles as 
$n \to \infty$ 
and 
$\beta \to 0$ 
such that 
$n \beta = const.$

\subsection{Notations}
In this subsection 
we introduce some basic notions and fix notations. 
A Jacobi matrix 
is a symmetric tridiagonal matrix with positive entries in the subdiagonal. 
In this paper 
we will deal with three types of Jacobi matrices: finite, infinite and doubly infinite matrices. 
The empirical distribution/measure 
is defined, for a finite Jacobi matrix 
$J$ 
of size 
$n$ 
with eigenvalues 
$\{ \lambda_j \}_{j=1}^n$, 
by 
\[
L_n 
:=
\frac 1n \sum_{j=1}^n \delta_{\lambda_j},
\]
where $\delta_\lambda$ denotes the Dirac measure.
Note that a finite Jacobi matrix of size $n$ has exactly $n$ distinct real eigenvalues.
The spectral measure 
may be considered for a Jacobi matrix 
$J$ 
of any type.
First of all, 
there is a measure 
$\mu$ 
satisfying
\[
	\int x^k d\mu  = (J^k e_1, e_1) = J^k(1,1), 
	\quad 
	k = 0,1,\dots.
\] 
A measure $\mu$ is unique only if it is determined by moments and is called the spectral measure of 
$J$ 
or more precisely the spectral measure of $(J, e_1)$. 
In the case of 
infinite Jacobi matrices, a sufficient condition for the uniqueness is 
\begin{equation}\label{sufficient-condition-Jacobi-matrix}
	\sum_{i = 1}^\infty \frac{1}{b_i} = \infty
\end{equation}
\cite[Corollary~3.8.9]{Simon-book-2011},  
where $\{b_i\}_{i = 1}^\infty$ denote the subdiagonal entries.
A finite Jacobi matrix 
$J$ 
always has the spectral measure which is expressed as 
\[
	\nu_n = \sum_{j = 1}^n q_j^2 \delta_{\lambda_j},\quad q_j = |u_j(1)|,
\]  
where 
$\{ u_j \}_{j=1}^n$ 
are normalized eigenvectors corresponding to the eigenvalues 
$\{ \lambda_j \}_{j=1}^n$. 
For the case of 
$T_{n, \beta}$, 
it is known that the weights 
$\{ q_j^2 \}_{j=1}^n$ 
are distributed as Dirichlet distribution with parameter 
$\beta /2$ 
and are independent of the eigenvalues 
$\{ \lambda_j \}_{j=1}^n$
\cite{DE02}.
An easy but important 
consequence of this fact is that the empirical distribution 
$L_{n, \beta}$ 
and the spectral measure 
$\nu_{n, \beta}$ 
of 
$T_{n, \beta}$ 
have the same mean, 
$\bar L_{n, \beta} = \bar \nu_{n, \beta}$, 
where the mean 
$\bar{\mu}$
of a random probability measure 
$\mu$
is defined by
\[
	\bar \mu(A) = \E[\mu(A)]
\]
for all Borel sets $A$. 
\subsection{Results}
For fixed 
$\beta$, 
it is well known that both the empirical measure 
and the spectral measure 
of 
$(
1 + n \beta/2
)^{-1/2} 
T_{n, \beta}$
converge weakly to the semi-circle distribution almost surely (Wigner's semi-circle law). 
In fact, 
the limiting behavior for  spectral measures in general follows directly from those of entries. In addition, the distance between the two measures converges to zero, which gives 
another point of view 
to the classical Wigner's semi-circle law in terms of spectral measures \cite{Duy2016}. Note that the results still hold when $\beta$ varies but $n\beta \to \infty$.

The main subject 
of this paper is to consider the joint limit such that 
$n \to \infty$ 
and 
$\beta \to 0$ with $n \beta$ being bounded.
The following 
results have been known 
\cite{Peche15,DS15}.
When  
$n \beta = 2 \alpha$, 
each entry of 
$T_{n, \beta}$ 
converges in distribution to the corresponding entry of the i.i.d.~(independent identically distributed) Jacobi matrix 
$J_\alpha$, 
where 
\[
	J_\alpha = \begin{pmatrix}
		\Normal(0,1)		&\chitilde_{2 \alpha}	\\
		\chitilde_{2 \alpha}	&\Normal(0,1)		&\chitilde_{2 \alpha}		\\
					
												&\ddots		&\ddots		&\ddots 
	\end{pmatrix}.
\]
Since the subdiagonal of 
$J_\alpha$ 
is an i.i.d.~sequence, the condition~\eqref{sufficient-condition-Jacobi-matrix} holds almost surely, so that the spectral measure 
$\mu_\alpha$ 
of 
$J_\alpha$ 
is well-defined. 
Consequently, 
the spectral measure $\nu_{n, \beta}$ of $T_{n, \beta}$ 
converges weakly to  
$\mu_\alpha$ 
in distribution, and thus, the mean 
$\bar\nu_{n, \beta} = \bar L_{n, \beta}$ 
converges weakly to 
$\bar \mu_\alpha$. 
Being different 
from spectral measures, the empirical distribution 
$L_{n, \beta}$ 
converges weakly to 
$\bar \mu_\alpha$ 
in probability. 
This is stated in  \cite{Peche15} 
and it is also possible to give an alternative proof by using the arguments in \cite{DS15}. 
That  the empirical distribution 
converges to a non random measure corresponds to the existence of the integrated density of states in the context of random Schr\"odinger operators, where its density is called the density of states.

Moreover 
the limiting measure 
$\bar\mu_\alpha$ 
is explicitly computed in 
\cite{Allez12,DS15} 
and is referred to as the probability measure of associated Hermite polynomials \cite{Askey-Wimp-1984} whose density is given by 
\[
		\bar \mu_\alpha (E) = \frac{e^{-E^2/2}}{\sqrt{2\pi}}\frac{1}{|\hat f_\alpha(E)|^2} ,
		\text{where }
		\hat f_\alpha(E) =  \sqrt{\frac{\alpha}{\Gamma(\alpha)}} \int_0^\infty t^{\alpha - 1} 
		e^{-\frac {t^2}{2} + iEt} dt.		
\]
As is remarked in \cite{Peche15}, the scaled measure
$(\alpha+1)^{1/2} 
\bar{\mu}_{\alpha}((\alpha+1)^{1/2}  E)$ 
tends to the semicircle distribution (resp.~$\Normal(0,1)$) as 
$\alpha$ 
tends to 
infinity
(resp.~zero), being consistent with the results stated in the preceding paragraph. 
Hence 
$\bar{\mu}_{\alpha}$ 
may be regarded as an interpolation between these two measures. 
A natural problem 
now is to study the fluctuation around the limit (that is, a CLT type statement). 
\begin{theorem}
\label{CLT}
Assume that a function $f$ has continuous derivative of polynomial growth. Then as $n \to \infty$ with $n \beta = 2 \alpha$,
\[
	\sqrt{n}(\lf L_{n, \beta}, f\rt - \E[\lf L_{n, \beta}, f\rt]) \dto \Normal(0, \sigma_f^2),
\]
for some constant 
$\sigma_f^2 \ge 0$. Here $\lf \mu, f\rt := \int f d\mu$, and `$\dto$' denotes the convergence in distribution.
\end{theorem}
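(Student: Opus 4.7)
The plan is to exploit the tridiagonal structure of $T_{n,\beta}$, which turns each diagonal term $(f(T_{n,\beta}))_{ii}$ into a local functional of the matrix entries and reduces the CLT to one for an $m$-dependent, non-stationary triangular array.

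First, a reduction to polynomial test functions. Because $(n-k)\beta\le 2\alpha$ for every $k$, each entry of $T_{n,\beta}$ has moments bounded uniformly in $n$, and Gershgorin together with standard chi and Gaussian tail bounds gives a deterministic window $[-R_n,R_n]$ of logarithmic order that contains all eigenvalues with overwhelming probability. Using the polynomial growth of $f$ and polynomial (Taylor or Chebyshev) approximation of $f$ on this window, one shows that replacing $f$ by a polynomial $p_N$ produces an error whose second moment after $\sqrt n$-scaling vanishes as $N\to\infty$, provided the CLT holds for polynomials with bilinear, continuous limiting variance. Hence it suffices to prove the CLT for $f(x)=x^k$.

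For $f(x)=x^k$, write
\[
n\langle L_{n,\beta},x^k\rangle=\Tr(T_{n,\beta}^k)=\sum_{i=1}^n Y_i,\qquad Y_i:=(T_{n,\beta}^k)_{ii},
\]
and expand $(T_{n,\beta}^k)_{ii}$ as a sum over closed nearest-neighbour walks of length $k$ starting at $i$. Since such walks stay within distance $\lfloor k/2\rfloor$ of $i$, $Y_i$ is a polynomial in the entries $T_{n,\beta}(j,l)$ with $|j-i|,|l-i|\le\lfloor k/2\rfloor$. Because the entries of $T_{n,\beta}$ are independent, the centred variables $\tilde Y_i:=Y_i-\Ex[Y_i]$ form a $k$-dependent sequence: $\tilde Y_i$ and $\tilde Y_j$ are independent whenever $|i-j|>k$. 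I would then invoke a classical CLT for triangular arrays of $m$-dependent non-stationary random variables (Berk/Orey style), whose hypotheses are a uniform Lyapunov moment bound on $\tilde Y_i$ and convergence of the normalised variance. The moment bound is immediate from the uniform moment bounds on the entries, and
\[
\frac{1}{n}\Var\Bigl(\sum_{i=1}^n\tilde Y_i\Bigr)=\frac{1}{n}\sum_{i=1}^n\sum_{|h|\le k}\Cov(\tilde Y_i,\tilde Y_{i+h})
\]
is a Riemann sum whose summand depends on $i$ only through the smoothly varying parameters $(n-i-s)\beta=2\alpha(1-(i+s)/n)$, $|s|\le k$. Along $i/n\to t\in[0,1]$ these parameters tend to $2\alpha(1-t)$, and the local distribution of $\tilde Y_i$ converges to the one attached to the i.i.d.\ Jacobi matrix $J_{\alpha(1-t)}$; the Riemann sum then converges to a non-negative $\sigma_f^2$ expressible as an integral over $t\in[0,1]$ of the spectral-variance kernel of $J_{\alpha(1-t)}$.

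The step I expect to be the main obstacle is controlling the boundary contribution near $i\approx n$, where the subdiagonal parameter $(n-i)\beta$ tends to $0$ and $\chitilde_{(n-i)\beta}$ degenerates at $0$. Since $\Ex[\chitilde_r^{2p}]=\Gamma(r/2+p)/\Gamma(r/2)$ is bounded on $[0,2\alpha]$ (and small for small $r$), the local variables $\tilde Y_i$ satisfy the required uniform-in-$i$ moment bounds, so the boundary strip of width $O(1)$ contributes only $O(1/n)$ to the normalised variance. A secondary nuisance is the quantitative polynomial approximation invoked in the reduction of the first paragraph, which reduces to standard Gaussian and chi-square concentration for the independent entries of $T_{n,\beta}$.
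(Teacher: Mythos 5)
Your treatment of the polynomial case is a valid alternative to the paper's. Where the paper sets up a martingale $X_{n,k}=\E[n\langle L_{n,\beta},p\rangle\mid\F_{n,k}]$ with $\F_{n,k}=\sigma(a_i,b_i:i\le k)$ and applies Billingsley's martingale-difference CLT, you exploit the same underlying finite-range dependence directly by writing $\Tr(T_{n,\beta}^k)=\sum_i (T_{n,\beta}^k)_{ii}$ as a $k$-dependent triangular array and invoking an $m$-dependent CLT. Both routes need exactly the same two ingredients (uniform moment bounds and convergence of the normalised variance), and your Riemann-sum identification $\sigma_p^2=\int_0^1\sigma_p^2(u\alpha)\,du$ matches the paper's Theorem~\ref{thm:poly}. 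This part is sound.

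The reduction from $f\in C^1_p(\R)$ to polynomials is where your proposal has a genuine gap. Your plan is to confine the spectrum to a logarithmic window $[-R_n,R_n]$ via Gershgorin and chi/Gaussian tails and then Taylor/Chebyshev-approximate $f$ on that window, claiming that $n\Var[\langle L_{n,\beta},f-p_N\rangle]$ can be made small uniformly in $n$ by taking $N$ large. This does not work as stated. For a degree-$N$ polynomial with $N$ fixed, the sup-norm approximation error $\delta_N(n):=\|f-p_N\|_{L^\infty[-R_n,R_n]}$ does not tend to $0$ as $n\to\infty$, because $R_n$ grows. Without a decorrelation estimate the best a priori bound is $\Var[\sum_j(f-p_N)(\lambda_j)]\le n^2\delta_N(n)^2$, so $n\Var[\langle L_{n,\beta},f-p_N\rangle]\le n\,\delta_N(n)^2$, which is useless. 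If you instead let the degree grow with $n$ to drive $\delta$ down, the finite-range dependence of the polynomial functional grows with $n$ and the $m$-dependent CLT machinery no longer applies. The quantity you need, and do not have, is a quantitative Poincar\'e-type variance inequality for linear statistics. The paper's extension step rests precisely on such an inequality (from \cite{Duy2016}, Eq.~\eqref{GE}):
\[
  n\Var[\langle L_{n,\beta},g\rangle]\le\langle\bar L_{n,\beta},(g')^2\rangle,
\]
which follows from the explicit joint density of G$\beta$E and is not a consequence of the tridiagonal structure or of Gershgorin bounds. With it, taking $p_k\to f'$ in $L^2(\bar\mu_\alpha)$ and $P_k$ a primitive, one gets $\limsup_n n\Var[\langle L_{n,\beta},f-P_k\rangle]\le\langle\bar\mu_\alpha,(f'-p_k)^2\rangle\to0$, and the CLT for $f$ follows from the polynomial CLT and Lemma~\ref{lem:triangle}. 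You need to replace your Gershgorin/approximation paragraph with this (or an equivalent) variance bound; otherwise the passage from polynomials to general $C^1_p$ test functions is not justified.
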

Remark that all the results are stated and proved for $n\beta = 2\alpha$. However, all the arguments still work if $n\beta \to 2\alpha \in [0,\infty)$ because in this regime $n\beta$ stays bounded.   

There are already some results on CLT 
\cite{DE06,Johansson98} 
for fixed 
$\beta$ in which the limiting variance is given explicitly. However, these approaches do not directly apply to our problem; or at least we need more work.  
We propose here another approach which is based on the martingale difference central limit theorem to derive a CLT for polynomial test functions.
Then 
we extend the CLT to continuous functions with continuous derivative of polynomial growth by the method which has recently been developed in \cite{Duy2016}.

The next problem is to consider the bulk scaling limit, that is, to study the limiting behavior of following point process
\[
	\xi_n = \sum_{j = 1}^n \delta_{n(\lambda_j - E)},
\]
where 
$\{ \lambda_j \}_{j = 1}^n$ 
are the eigenvalues of 
$T_{n, \beta}$ and $E$ is a fixed real number.
It is proved in 
\cite{Peche15} 
that as $n \to \infty$ 
with 
$\beta n = 2 \alpha$, 
$\xi_n$ 
converges to a homogeneous Poisson point process with intensity (cf.~Eq.~(7) in \cite{Peche15})
\[
	\theta_E = \frac{1}{\sqrt{2 \pi} \Gamma(\alpha + 1)} \exp\left(-\frac{E^2}{2} + 2 \alpha \int \log |E - y| \bar\mu_\alpha(dy)  \right). 
\]
We note that the 
$\Sine_{\beta}$
process, which is the bulk scaling limit of G$\beta$E with fixed 
$\beta$,  
converges to the Poisson point process as 
$\beta \to 0$
\cite{Allez-Dumaz-2014}, 
which is consistent with the statement above. 
The approach in \cite{Peche15}
is based on analyzing  the joint density of G$\beta$E. 
It was 
conjectured that the intensity 
$\theta_E$ 
should agree with the density of states 
$\bar\mu_\alpha(E)$. 
In this paper, 
we derive the same result with desired intensity 
$\bar\mu_\alpha(E)$. 
\begin{theorem}
\label{Poisson}
As $n \to \infty$ with $n \beta = 2\alpha$, the point process $\xi_n$ converges weakly to a homogeneous Poisson point process with intensity $\bar \mu_\alpha(E)$.
\end{theorem}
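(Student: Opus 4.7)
The plan is to apply Kallenberg's theorem for convergence to a Poisson point process: since the eigenvalues of $T_{n,\beta}$ are almost surely distinct, any subsequential weak limit of $\xi_n$ is simple, and it suffices to verify, for every bounded interval $I = (a,b) \subset \R$, the two conditions
\begin{equation*}
\text{(i)}\ \E[\xi_n(I)] \to \bar\mu_\alpha(E)\,|I|, \qquad \text{(ii)}\ \Prob[\xi_n(I) = 0] \to \exp\bigl(-\bar\mu_\alpha(E)\,|I|\bigr).
\end{equation*}

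Condition (i) follows from the density-of-states convergence: one writes $\E[\xi_n([a,b])] = n\,\bar L_{n,\beta}([E+a/n,\, E+b/n])$ and uses that the one-point correlation function of $T_{n,\beta}$ (equivalently, the density of $\bar L_{n,\beta}$) converges to the continuous density $\bar\mu_\alpha$ uniformly on compacts, a standard fact in this high-temperature regime that can be extracted from the weak convergence $L_{n,\beta}\to \bar\mu_\alpha$ combined with tightness of the density.

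For (ii), the plan is to use the Sturm oscillation representation of the eigenvalue counting function. Let $a_k, b_k$ denote the diagonal and subdiagonal entries of $T_{n,\beta}$, and set $r_k(x) = p_k(x)/p_{k-1}(x)$, where $p_k$ is the characteristic polynomial of the leading $k\times k$ block. The ratios satisfy the three-term recursion $r_k(x) = (x - a_k) - b_{k-1}^2/r_{k-1}(x)$, and Sturm's theorem identifies the number of eigenvalues of $T_{n,\beta}$ in $(-\infty,x]$ with $\#\{k\le n : r_k(x)<0\}$. Passing to a Pr\"ufer-type phase $\theta_k(x)$ chosen monotone in $k$, the eigenvalue count in an interval becomes the difference of winding numbers past multiples of $\pi$ at the two endpoints. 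Under the scaling $n\beta = 2\alpha$, the entries $(a_k, b_k)$ are asymptotically i.i.d.\ distributed as those of $J_\alpha$, so $(r_k(x))_{k\ge1}$ is approximately a time-homogeneous Markov chain whose long-run density of $\pi$-windings per unit index is exactly $\bar\mu_\alpha(x)$, re-identifying the density of states.

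The void probability in (ii) then becomes the probability that the Pr\"ufer chains at $x = E + a/n$ and $x = E + b/n$ accumulate the same winding over $k = 1,\ldots,n$. Since these levels differ by $O(1/n)$, the two chains stay pointwise close in $k$, while a derivative-in-$x$ calculation shows that at each step the probability of an extra winding in the window $[E+a/n,\, E+b/n]$ is asymptotically $\bar\mu_\alpha(E)(b-a)/n$. Exponential mixing of the Markov chain $r_k$ decorrelates these rare events at well-separated indices $k$, and a triangular-array Poisson limit theorem then yields $\exp(-\bar\mu_\alpha(E)(b-a))$. The main obstacle is this decorrelation step: one needs quantitative ergodicity for the Pr\"ufer chain sufficient to control joint statistics over $n$ steps, together with error control for the deviation of $(a_k, b_k)$ from its limiting i.i.d.\ law near the end of the matrix, where $b_k = \chitilde_{(n-k)\beta}$ degenerates as $k\to n$.
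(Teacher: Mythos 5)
Your scheme is a genuinely different route from the paper's. The paper verifies the Minami-machinery hypotheses of Theorem~\ref{thm:Poisson-main}: Conditions A and B are elementary for $T_{n,\beta}$, Condition G (exponential decay of Green's functions) is established in Section~\ref{sec:G} via Schenker's operator method using fluctuation regularity of the Gaussian, and Condition L (the local law) is proved in Section~4.2.2 through an intermediate-scale comparison with the genuinely i.i.d.\ matrix $J_\alpha$ (Lemma~\ref{lem:delta}), a delicate continuity estimate on the density $\bar\nu_{n,\beta}(E)$ extracted from the joint $\beta$-ensemble density (Lemmas~\ref{lem:key-estimate}--\ref{lem:local-continuity}), and a general Stieltjes-transform scaling lemma (Lemma~\ref{lem:Stieltjes-transform}). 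Once A, B, G, L are in place, the Poisson limit follows by decomposing $[1,n]$ into boxes of size $n^a$ and invoking Grigelionis' theorem for superpositions of independent negligible point processes, with the Wegner and Minami bounds supplying uniform controls. Your alternative rests on Kallenberg's void-probability criterion and a Sturm/Pr\"ufer winding analysis of the Riccati recursion.

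The difficulty is that in your plan the hard analytic work is asserted rather than circumvented. For condition (i), you claim the density $\bar\nu_{n,\beta}$ converges locally uniformly to $\bar\mu_\alpha$ as a ``standard fact'' following from weak convergence of $L_{n,\beta}$ plus ``tightness of the density''; but weak convergence of probability measures carries no information about pointwise, let alone locally uniform, convergence of densities. What (i) actually requires is precisely $\bar\nu_{n,\beta}(E+t) \to \bar\mu_\alpha(E)$ uniformly for $|t| = O(1/n)$, which \emph{is} the local law, and its proof in the paper is a hands-on computation with the joint eigenvalue density (Lemma~\ref{lem:key-estimate}); it is the main technical content, not a soft compactness consequence. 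For condition (ii), the Riccati chain $r_k(x) = (x-a_k) - b_{k-1}^2/r_{k-1}(x)$ is time-inhomogeneous (since $b_k \sim \chitilde_{(n-k)\beta}$, degenerating as $k \to n$), unbounded, and has singular transitions near $r=0$; you correctly flag exponential mixing of this chain over $n$ steps and the end-of-matrix degeneracy as the main obstacles, but leave them unaddressed, and they carry the entire argument --- establishing that decorrelation is at least as hard as Condition G and would likely reduce to it. Finally, the reduction itself is not airtight: Kallenberg's criterion needs void-probability convergence over a dissecting ring (finite unions of intervals), not just single intervals, and simplicity of each $\xi_n$ does not by itself imply that a subsequential weak limit is simple --- one needs a two-point estimate, which is exactly Minami's bound. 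So the two nontrivial ingredients of the paper, the local law and the localization/decorrelation estimate, both reappear in your plan as unproved assertions.
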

For the proof we note that 
$T_{n, \beta}$ 
exhibits the Anderson localization, that is, the eigenvectors are exponentially localized, so that we can make use of the well-established method in the field of random Schr\"odinger operators \cite{Minami-1996}.
To apply the ideas in 
\cite{Minami-1996}, we need 
(i) Wegner's bound, (ii) Minami's bound, (iii) exponential decay of Green's functions, and (iv) local law. 
Contrary to usual cases, 
the main issue here is to prove 
(iv) 
because 
$T_{n, \beta}$ 
has no translation invariance. It is worth noting that a non-trivial identity $\theta_E = \bar \mu_\alpha(E)$ can be derived in an indirect way in the proof of the local law.

In the following sections 
we prove 
Theorems \ref{CLT} and \ref{Poisson}. 
Since 
our arguments are not specified to these particular matrices 
$T_{n, \beta}$, 
we work under a general setting. 
In Section~2, 
we consider random Jacobi matrices with independent entries.
By refining Minami's method, 
we prove the Poisson statistics for
$\xi_n$ 
under some mild conditions.
Two important 
sufficient conditions among them, which are non-trivial in this setting, are 
(iii) the exponential decay of Green's functions, and 
(iv) the local law. 
In Section 3, 
we show that (iii) holds for a class of Jacobi matrices  including G$\beta$E.
There are 
some approaches for that among which we use the so-called the operator method 
\cite{Schenker-2009}.
Section 4 is 
the main part of this paper where we prove Theorem \ref{CLT} and the local law. 
Since our model 
is closely related to i.i.d.~Jacobi matrices, we will also discuss some known results about i.i.d.~Jacobi matrices.  
In Appendix A (resp.~B)
we recall the martingale difference CLT
(resp.~precise definition of convergence in distribution of random probability measures). 

\section{Poisson statistics}
Consider a sequence of random Jacobi matrices 
\[
	J_n = \begin{pmatrix}
		a_1		&b_1		\\
		b_1		&a_2		&b_2\\
		&\ddots	&\ddots	&\ddots\\
		&&		b_{n - 1}	&a_n
	\end{pmatrix},
\]
where $\{a_i\}_{i = 1}^n$ and $\{b_i\}_{i = 1}^{n - 1}$ are independent random variables with an assumption that $b_i > 0, i = 1, \dots, n - 1$. For different $n$, the sequences $\{a_i\}_{i = 1}^n$ and $\{b_i\}_{i = 1}^{n - 1}$ may be different. Let $\lambda_1, \dots, \lambda_n$ be the eigenvalues of $J_n$. Let $\xi_n$ be the local statistics around $E \in \R$, that is, a point process on $\R$ defined as
\begin{equation}\label{local-statistics}
	\xi_n = \sum_{j = 1}^n \delta_{n (\lambda_j - E)}.
\end{equation}
A real number $E$ is referred to as a reference energy. The purpose of this section is to provide sufficient conditions for the point process $\xi_n$ to converge to a homogeneous Poisson point process.

When $\{a_i\}_{i = 1}^\infty$ and $\{b_i\}_{i = 1}^\infty$ are stationary sequences, it is well known that for all $E \in \R$,
\[
	\frac{1}{n} \# \{1 \le j \le n : \lambda_j \le E\} \to N(E) \text{ almost surely as } n \to \infty,
\]
where $N(E)$ is a non-random function called the integrated density of states \cite{Carmona-Lacroix}. The derivative $\n(E) = dN(E)/dE$ when exists is called the density of states at the energy $E$.

Jacobi matrices with $b_i \equiv 1$ are called discrete Schr\"odinger operators and the diagonal $\{a_i\}$ is referred to as potentials.
In case of i.i.d.~potentials, when the common distribution has bounded density and a bounded moment of positive order, the Green's function of $J_n$ decays exponentially fast. As a result, the local statistics converges to a homogeneous Poisson point process with intensity $\n(E)$, provided that $\n(E)$ exists and is positive~\cite{Minami-1996}.

This section generalizes the above well-known result on discrete Schr\"odinger operators to the case of general random Jacobi matrices. Our result can be roughly stated as follows. Under some mild conditions on $\{a_i\}$ and $\{b_i\}$, the Poisson statistics follows under the assumption that Green's functions decay exponentially fast and an additional condition called the local law. Here the local law requires that the expected number of points of the point process $\xi_n$ lying in a bounded interval is proportional to (in the limit as $n\to \infty$) the length of the interval. In the i.i.d.~case, the local law is a consequence of the exponential decay of Green's functions. See Section~\ref{sec:global-local} for more details.

Now let us explain some terminologies. Let $\cM(\R)$ be the space of all non-negative Radon measures on $\R$ equipped with the vague topology. Here the vague topology is the topology in which a sequence $\{\mu_n\} \subset \cM(\R)$ converges to $\mu \in \cM(\R)$ if 
\[
	\int_\R f(x) \mu_n(dx) =: \mu_n(f)  \to  \mu(f) \text{ as } n \to \infty,
\]
for all $f$ in $C_K^+(\R)$, the space of all non-negative continuous functions with compact support. A subset $\cN(\R)$ of all integer valued Radon measures on $\R$ then becomes a closed set in $\cM(\R)$. A point process is defined to be an $\cN(\R)$-valued random variable. Note that an element $\xi \in \cN(\R)$ can be written as 
\[
	\xi = \sum_j \delta_{x_j} ,
\]
where $\{x_j\}$ is a sequence of real numbers having no finite accumulation point.

An important example of point processes is a Poisson point process. Let $\mu$ be a Radon measure on $\R$. A point process $\xi$ is said to be a Poisson point process with intensity measure $\mu$ if it satisfies the following two conditions:
\begin{itemize}
\item[(a)]	for bounded Borel set $A$, $\xi(A)$ has Poisson distribution with parameter $\mu(A)$, namely
	\[
		\Prob(\xi(A) = k ) = e^{-\mu(A)} \frac{\mu(A)^k}{k!}, k = 0,1,\dots;
	\]
\item[(b)] for disjoint bounded Borel sets $A_1, \dots, A_m$, $\xi(A_1), \dots, \xi(A_m)$ are independent.  
\end{itemize}
A Poisson point process with intensity measure $\theta dx$ is called a homogeneous Poisson point process with intensity $\theta$.

A sequence of point processes $\{\xi_n\}_{n = 1}^\infty$ is said to converge weakly (or in distribution) to a point process $\xi$ if for any bounded continuous function $\Phi$ on $\cN(\R)$,  
\[
	\E[\Phi(\xi_n)] \to \E[ \Phi(\xi)] \text{ as } n \to \infty.
\]
Note that $\{\xi_n\}$ and $\xi$ may be defined on different probability spaces but we use the same symbol $\E$ to denote the expectation.
The weak convergence of point processes is known to be equivalent to the following statement:
for any $\varphi \in C_K^+(\R)$, 
\[
	\lim_{n \to \infty} \E [e^{- \xi_n(\varphi)}] = \E [e^{-\xi(\varphi)}].
\]

For  $\zeta = \sigma + i \tau \in \C_+:=\{z \in \C: \Image z > 0\}$, let  
\[
	f_\zeta(x) := \Image \frac{1}{x - \zeta} = \frac{\tau}{(x - \sigma)^2 + \tau^2}.
\]
Define a class $\cA$ of test functions of the form 
\[
	f(x) = \sum_{j = 1}^m \frac{\alpha_j \tau}{(x - \sigma_j)^2 + \tau^2} = \sum_{j = 1}^m \alpha_j f_{\zeta_j} (x),
\]
with $m \ge 1, \tau > 0$ and $\alpha_j > 0, \sigma_j \in \R, \zeta_j = \sigma_j + i \tau$ for $j = 1, \dots, m$. We will use the following criterion for the weak convergence of point processes.
\begin{lemma}[{\cite[Lemma~1]{Minami-1996}}]\label{lem:wc-criterion}
	Let $\{\xi_n\}_{n = 1}^\infty$ and $\xi$ be point processes such that $\E[\xi_n(dx)] \le C dx$, and $\E[\xi(dx)] \le C dx$. Then $\xi_n$ converges weakly to $\xi$, if and only if  for any $f \in \cA$, 
	\[
		\E[e^{- \xi_n(f)}] \to \E[e^{-\xi(f)}] \text{ as } n \to \infty.
	\]
Here $\E[\xi(dx)]$ denotes the intensity measure or the mean measure of a point process $\xi$, a measure $\mu$ defined as
\[
	\mu(A) = \E[\xi(A)], \text{ for all bounded Borel sets $A$.} 
\]
\end{lemma}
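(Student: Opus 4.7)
The plan is to exploit a Lipschitz bound on the Laplace functional with respect to the $L^1$ norm. For any point process $\eta$ whose intensity measure satisfies $\E[\eta(dx)] \le C\,dx$ and any non-negative measurable $f, g$,
\[
\bigl|\E[e^{-\eta(f)}] - \E[e^{-\eta(g)}]\bigr| \le \E[|\eta(f)-\eta(g)|] \le \E[\eta(|f-g|)] \le C\|f-g\|_{L^1},
\]
using $|e^{-a}-e^{-b}| \le |a-b|$ for $a, b \ge 0$ and positivity of $\eta$. This bound applies uniformly to all $\xi_n$ and to $\xi$. Both implications then reduce to showing that the class on which convergence is assumed is $L^1$-dense in the class on which convergence is desired.

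For the necessity direction, given $f \in \cA$, first note $f \in L^1(\R)$ since each Poisson kernel $f_{\zeta_j}$ has $L^1$-norm $\pi$. I would pick a cutoff $\chi_R \in C_K^+$ with $\chi_R \equiv 1$ on $[-R, R]$ and $0 \le \chi_R \le 1$, so that $\|f(1-\chi_R)\|_{L^1} \to 0$ as $R \to \infty$. Since $f\chi_R \in C_K^+(\R)$, the weak-convergence hypothesis gives $\E[e^{-\xi_n(f\chi_R)}] \to \E[e^{-\xi(f\chi_R)}]$; the Lipschitz bound controls the tail uniformly in $n$, and a triangle inequality followed by $R \to \infty$ gives convergence for $f$.

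For the sufficiency direction, given $\varphi \in C_K^+(\R)$, the plan is a two-step approximation. First, for small $\tau > 0$, replace $\varphi$ by its Poisson extension
\[
\Pi_\tau\varphi(x) := \frac{1}{\pi}\int_\R f_{\sigma+i\tau}(x)\,\varphi(\sigma)\,d\sigma,
\]
which converges to $\varphi$ uniformly, and hence in $L^1$, as $\tau \to 0$ since $\frac{1}{\pi}f_{\sigma+i\tau}$ is the standard Poisson approximate identity. Second, for a fixed small $\tau$, approximate the integral by a Riemann sum $g(x) = \sum_j \frac{\varphi(\sigma_j)}{\pi}\,f_{\sigma_j+i\tau}(x)\,\Delta\sigma$. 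Because $\varphi \ge 0$ the coefficients are non-negative (drop any vanishing terms), and all summands share the common imaginary part $\tau$, so $g \in \cA$. Composing these two approximations produces elements of $\cA$ that are $L^1$-close to $\varphi$; the Lipschitz bound together with the hypothesis on $\cA$ then yields $\E[e^{-\xi_n(\varphi)}] \to \E[e^{-\xi(\varphi)}]$, which is the definition of weak convergence.

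The main obstacle I anticipate is the sufficiency direction, specifically justifying the $L^1$-density of $\cA$ in $C_K^+(\R)$. The class $\cA$ is quite restrictive: it is not a vector space (only positive coefficients are allowed) and every element is built from Poisson kernels with a single common width $\tau$. The resolution is the observation above, namely that the non-negativity of $\varphi$ is exactly what is needed to keep the Riemann-sum approximants of $\Pi_\tau\varphi$ inside $\cA$, and that the approximate-identity property of the Poisson kernel recovers $\varphi$ as $\tau \to 0$.
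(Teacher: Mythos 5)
The paper states this lemma as a direct citation of Minami's Lemma~1 and gives no proof of its own, so there is no in-paper argument to compare against. Your reconstruction is correct and is essentially the standard (Minami) argument: an $L^1$-Lipschitz bound for Laplace functionals under the uniform intensity hypothesis, combined with $L^1$-density of $\cA$ among nonnegative test functions via the Poisson approximate identity and positive Riemann sums. Two small imprecisions are worth noting. First, the step ``converges to $\varphi$ uniformly, and hence in $L^1$'' does not literally follow, since $\R$ has infinite measure and $\Pi_\tau\varphi$ is not compactly supported; however, $\|P_\tau\ast\varphi-\varphi\|_{L^1}\to 0$ holds directly by the approximate-identity property of the Poisson kernel, which you invoke anyway, so the conclusion stands. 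Second, the Riemann-sum approximation must be shown to converge to $\Pi_\tau\varphi$ in $L^1(\R,dx)$, not merely pointwise; this is true, but it deserves a sentence: by Fubini, the $L^1$-error is bounded by an integral over the compact support of $\varphi$ of $\pi|\varphi(\sigma_j)-\varphi(\sigma)| + \varphi(\sigma)\,\|f_{\sigma_j+i\tau}-f_{\sigma+i\tau}\|_{L^1}$, and both terms are controlled uniformly by the modulus of continuity of $\varphi$ and the $L^1$-continuity of translations of the Poisson kernel. With those two clarifications your proof is complete and matches what the cited reference does.
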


We are now in a position to give sufficient conditions for the Poisson statistics. For $\zeta = \sigma + i \tau$, it is straight forward to deduce that
\[
	\xi_n(f_\zeta) = \frac{1}{n} \sum_{j = 1}^n \Image \frac{1}{\lambda_j - (E + \frac{\zeta}{n})} = \frac{1}{n} \Image \Tr G_n(z),  (z = E+ \frac{\zeta}{n}),
\]
where $G_n(z) = (J_n - z)^{-1}$ is the Green's function, or the resolvent of $J_n$. Recall that our aim is to consider the limiting behavior of the local statistics $\xi_n$ associated with the Jacobi matrix $J_n$ as $n$ tends to infinity. The sequence $\{a_i\}_{i = 1}^n$ and $\{b_i\}_{i = 1}^{n - 1}$, in general, depend on $n$ but all constants in this paper will be assumed to be independent of $n$. Sufficient conditions for the Poisson statistics read as follows.
\begin{itemize}
\item[A.] The random variable $a_i$ has probability density function $\rho_i$, and $\rho_i$ is uniformly bounded, that is, 
\[
	\|\rho_i\|_\infty \le M_A, (i = 1, \dots, n).  
\]

\item[B.] 
For some $T > 1$,
\[
	\E[b_i^T] \le M_B, (i = 1, \dots, n - 1).
\]

\item[G.] (Exponential decay of Green's functions)
For some $0 < s < 1$, there are positive constants $M_s, \gamma_s$ and $\delta_s$ such that
\begin{equation}
	\E[|G_{[u,v]}(z; y,x)|^s]  \le M_s e^{- \gamma_s |y - x|}, \text{ for }y \in\{u, v\},  x \in [u, v],
\end{equation}
and for all $z \in \{z \in \C_+ : |z - E| < \delta_s\}$, and all $1\le u<v \le n$.
Here $G_{[u, v]}(z) = (J_n^{[u, v]} - z)^{-1}$ is the Green's function of $J_n^{[u, v]}=\{J_n(i,j)\}_{i, j\in [u,v]}$, the restriction of $J_n$ on $[u, v] = \{u, u+1, \dots, v\}$.

\item[L.] (Local law) There exists a positive constant $\theta$ such that
\begin{equation}\label{local-law-1}
	\E[\xi_n (I)] \to \theta |I| \as n \to \infty,  
\end{equation}
for all bounded intervals $I$. Note that $\xi_n(I) = \#\{\lambda_j \in E + \frac{I}{n}\}$.

\item[L$'$.] (Local law) There exists a positive constant $\theta$ such that for all $\zeta \in \C_+$,
\begin{equation}\label{local-law-2}
	\E[\xi_n(f_\zeta)] \to \pi \theta \as n \to \infty.
\end{equation}
\end{itemize}

When the intensity measures of the point processes $\{\xi_n\}$ are uniformly bounded, that is, $\E[\xi_n(dx)] \le C dx$, then Condition~L$'$ implies Condition~L. Indeed, assume that Condition~L$'$ holds. Let $I$ be a bounded interval. Then there are functions $\{f_k\}_{k \ge 1}$ in $\cA$,
\[
	f_k = \sum_{j:finite} \alpha_{k,j} f_{\zeta_{k,j}}, (\alpha_{k, j} > 0),
\]
that converge to $\one_{I}$ in $L^1(\R)$ \cite{Minami-1996}. For each $k$, it follows from Condition L$'$ that 
\[
	\E[\xi_n(f_k)] \to \pi \theta \sum_{j} \alpha_{k,j} = \theta \|f_k\|_{L^1(\R)} \as n \to \infty. 
\]
In addition, for all $n$, by the uniformly bounded assumption,
\[
	|\E[\xi_n(f_k)] -\E[ \xi_n (\one_I)]  |\le C\|f_k - \one_I\|_{L^1(\R)}.
\]
Therefore, 
\[
	\E[\xi_n(\one_I)]  \to \theta \|\one_I\|_{L^1(\R)} = \theta |I| \text{ as } n \to \infty. 
\]

Now we can state the main result in this section.
\begin{theorem}\label{thm:Poisson-main}
Assume that Conditions A, B, G and L hold. Then the local statistics  
\[
	\xi_n = \sum_{j = 1}^n \delta_{n (\lambda_j - E)}
\] 
converges weakly to a homogeneous Poisson point process with intensity $\theta$.
\end{theorem}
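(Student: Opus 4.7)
The overall strategy follows Minami's template~\cite{Minami-1996}. By Lemma~\ref{lem:wc-criterion}, it suffices to show that for every $f = \sum_{j=1}^m \alpha_j f_{\zeta_j} \in \cA$ the Laplace functional $\E[e^{-\xi_n(f)}]$ converges to the Poisson Laplace functional $\exp\bigl(\theta \int_\R (e^{-f(x)}-1)\,dx\bigr)$. The integrability hypothesis of Lemma~\ref{lem:wc-criterion} follows from Condition~A via spectral averaging, which yields the Wegner-type bound $\E[\xi_n(dx)] \le C\,dx$. The proof then proceeds in two stages: (i) \emph{decouple} $J_n$ into independent blocks so that $\xi_n(f) \approx \sum_k \xi_n^{(k)}(f)$, and (ii) derive Poisson convergence of the block sum from Minami's estimate and the local law.

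For stage (i), I partition $\{1,\dots,n\}$ into consecutive intervals $I_k = [u_k,v_k]$ of common length $L = L_n$ with $L_n \to \infty$ and $L_n/n \to 0$; a choice such as $L_n = \lfloor \log^2 n \rfloor$ suffices. Let $J_n^{(k)} := J_n^{[u_k,v_k]}$; these matrices are mutually independent by the independence of $\{a_i\}, \{b_i\}$, and set $\xi_n^{(k)} = \sum_{\lambda \in \mathrm{spec}(J_n^{(k)})} \delta_{n(\lambda-E)}$. Starting from $\xi_n(f_\zeta) = \frac{1}{n}\Image \Tr G_n(E+\zeta/n)$, one expands the difference $G_n - \bigoplus_k G_n^{(k)}$ via iterated resolvent identities, obtaining terms each involving a boundary coupling entry $b_{v_k}$ together with Green's function matrix elements starting from a block-boundary site. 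Condition~G then yields exponential decay $e^{-\gamma_s L}$ of those matrix elements in fractional moment, Condition~B provides integrability of the $b_{v_k}$ (via H\"older), and summing over the $K \sim n/L$ block boundaries produces a fractional-moment error of order $K e^{-\gamma_s L} = o(1)$ once $L_n \gg \log n$. Markov's inequality converts this into $\xi_n(f) - \sum_k \xi_n^{(k)}(f) \to 0$ in probability, and dominated convergence ($|e^{-\xi_n(f)}| \le 1$), combined with block independence, gives $\E[e^{-\xi_n(f)}] = \prod_k \E[e^{-\xi_n^{(k)}(f)}] + o(1)$.

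For stage (ii), the standard Laplace functional manipulation distinguishes the ``good'' event $\{\xi_n^{(k)}(I_f) \le 1\}$ (on which $1 - e^{-\xi_n^{(k)}(f)}$ is $\xi_n^{(k)}(f)$ up to a quadratic error) from the ``bad'' event of multiple eigenvalues in a window, where $I_f$ contains the essential support of $f$. The bad event is controlled by the Minami estimate $\E[\xi_n^{(k)}(I)(\xi_n^{(k)}(I)-1)] \le C(L|I|/n)^2$, which follows from Condition~A via the standard rank-two determinantal identity combined with spectral averaging, and sums over blocks to $O(L/n) = o(1)$. Combining with the local law in the form $\sum_k \E[\xi_n^{(k)}(f)] = \E[\xi_n(f)] + o(1) \to \theta \int f \,dx$—which follows from Condition~L, the stage~(i) approximation, and the Wegner intensity bound (providing the uniform integrability needed to upgrade convergence in probability to convergence in mean)—one arrives at $\log \prod_k \E[e^{-\xi_n^{(k)}(f)}] \to -\theta \int (1-e^{-f(x)})\,dx$, completing the proof.

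The main technical obstacle is the block approximation in stage (i): Condition~G only supplies decay of $G_{[u,v]}(z; y,x)$ when $y$ is a \emph{boundary} site of $[u,v]$, so the resolvent expansion must be organized so that every surviving term pairs a boundary Green's function factor (covered by Condition~G) with controllable remainders, and the unbounded boundary couplings $b_{v_k}$—known only to have a $T$th moment via Condition~B—must be carefully disentangled from the Green's function factors by H\"older's inequality before the fractional-moment estimate can be applied termwise.
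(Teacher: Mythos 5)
Your proposal follows essentially the same route as the paper: divide $[1,n]$ into $\sim n/L$ consecutive blocks (the paper takes $L \sim n^\alpha$ with $0<\alpha<1$, you take $L = \lfloor\log^2 n\rfloor$; both work since all that is needed is $\log n / L \to 0$ and $L/n \to 0$), show via the resolvent identity, Condition~G and H\"older that $\xi_n(f) - \sum_k \xi_n^{(k)}(f) \to 0$ so the two have the same limit by Lemma~\ref{lem:wc-criterion}, control multiple occupancy by Minami's estimate (justified via Condition~A), and feed in Condition~L through the Wegner intensity bound to pin down the Poisson rate $\theta$. The one genuine difference is cosmetic: the paper outsources the final limit to the abstract triangular-array Poisson criterion in Daley and Vere-Jones (Theorem 9.2.V), applied to the conditions $\sum_p\Prob(\eta_{n,p}(I)\ge 1)\to\theta|I|$ and $\sum_p\sum_{j\ge 2}\Prob(\eta_{n,p}(I)\ge j)\to 0$, whereas you unwrap that black box and carry out the Laplace-functional expansion on the good/bad occupancy dichotomy directly. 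If you pursue your route you will have to handle the non-compact support of $f\in\cA$ (e.g.\ truncate to a compact interval $I_f$ and control the tail $\xi_n^{(k)}(f\one_{I_f^c})$ with the Wegner bound and $\|f\one_{I_f^c}\|_{L^1}\to 0$), and your heuristic $Ke^{-\gamma_sL}$ for the block-decoupling error is not the estimate that actually appears: the relevant split is an interior/boundary decomposition of each block, with the Green's-function decay beating the $n^{2-\varepsilon}$ blow-up of $|\Image z|^{-(2-\varepsilon)}$ only for sites at distance $\gtrsim \log n$ from a boundary, and Wegner used on the remaining $O(\log n)$ boundary sites per block; but your condition $L_n\gg\log n$ is precisely what makes this bookkeeping close, so the idea is sound.
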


Let us give a sketch of the proof of Theorem~\ref{thm:Poisson-main}. The main stream is similar to \cite{Minami-1996}. We will omit proofs of trivially  extended results. The idea is as follows. Divide $[1,n]$ into small intervals $C_1, \dots, C_m$ of length $\sim n^\alpha$, $0 < \alpha < 1$. For each $p$, consider the restriction of $J_n$ on $C_p$ and the point process 
\[
	\eta_{n, p} = \sum_j \delta_{n(\lambda_j^{(C_p)} - E)},
\]
where $\{\lambda_j^{(C_p)}\}$ are the eigenvalues of $J_n^{C_p}$. Then $\xi_n$ is well approximated by the sum of independent negligible point processes $\{\eta_{n, p}\}_p$, which implies the convergence to a Poisson point process.

In order to apply the criterion for the weak convergence of point processes stated in Lemma~\ref{lem:wc-criterion}, we need the following result which is well known as Wegner's estimate. See \cite{Minami-1996} and references therein for the proof.
\begin{lemma}[Wegner's estimate]\label{lem:Wegner}
Assume that Condition A holds. Then 	
\[
		\E[\Image G_{[u, v]} (z; x,x)] \le M_A \pi,
\]
for all $z \in \C_+$, and $1\le u \le x \le v \le n$. Consequently, $\E[\xi_n(f_\zeta)] \le M_A \pi$ for all $\zeta \in \C_+$, and hence, $\E[\xi_n(dx)] \le M_A dx$.
\end{lemma}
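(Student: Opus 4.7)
The plan is to use spectral averaging with respect to the single diagonal entry $a_x$, which is the classical Wegner argument. The key point is that fixing all other entries, the $(x,x)$ diagonal entry of the Green's function depends on $a_x$ as a simple Möbius transformation, and its imaginary part becomes a Poisson kernel that is integrated against the bounded density $\rho_x$.

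More precisely, fix $1 \le u \le x \le v \le n$ and condition on all entries of $J_n^{[u,v]}$ except $a_x$. Write $J_n^{[u,v]} = J_0 + a_x P_x$, where $J_0$ coincides with $J_n^{[u,v]}$ except that its $(x,x)$ entry is set to zero, and $P_x = |e_x\rt\lf e_x|$. Applying the Schur complement formula at the site $x$ gives
\[
    G_{[u,v]}(z; x, x) \;=\; \frac{1}{a_x - z - S(z)},
\]
where $S(z)$ is a function of $z$ and of the entries of $J_0$ only, and is a Herglotz function of $z$, i.e.\ $\Image S(z) \ge 0$ for $z \in \C_+$. Setting $w = z + S(z) \in \C_+$, one finds
\[
    \Image G_{[u,v]}(z; x, x) \;=\; \frac{\Image w}{(a_x - \Real w)^2 + (\Image w)^2}.
\]
Now integrate against the density $\rho_x$ of $a_x$ and use that the Poisson kernel $t \mapsto \Image w / ((t - \Real w)^2 + (\Image w)^2)$ has total mass $\pi$ on $\R$:
\[
    \E_{a_x}\bigl[\Image G_{[u,v]}(z; x, x)\bigr] \;\le\; \|\rho_x\|_\infty \int_\R \frac{\Image w \, dt}{(t - \Real w)^2 + (\Image w)^2} \;=\; \pi \|\rho_x\|_\infty \;\le\; M_A \pi.
\]
Taking expectation over all remaining random entries and invoking Condition A yields the first inequality.

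For the consequence on $\xi_n(f_\zeta)$, I would use the identity already recorded in the text, $\xi_n(f_\zeta) = \frac{1}{n} \Image \Tr G_n(z)$ with $z = E + \zeta/n$, so that
\[
    \E[\xi_n(f_\zeta)] \;=\; \frac{1}{n}\sum_{x=1}^n \E\bigl[\Image G_n(z; x, x)\bigr] \;\le\; M_A \pi.
\]
The bound $\E[\xi_n(dx)] \le M_A\, dx$ then follows by the standard approximation of indicators of bounded intervals by positive linear combinations of the Poisson-kernel test functions $f_\zeta$ (as sketched in the paragraph preceding the statement), together with Fatou's lemma.

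The proof is essentially a one-step reduction; the only subtle point is verifying that $S(z)$ really is independent of $a_x$ and Herglotz, which follows directly from the explicit Schur-complement expression in terms of the Green's function of $J_n^{[u,v]}$ with the row and column at $x$ removed. I do not anticipate any genuine obstacle; Condition B plays no role in this lemma (positivity of $b_i$ suffices for the matrix to be well defined), and Condition A is used precisely once, via $\|\rho_x\|_\infty \le M_A$.
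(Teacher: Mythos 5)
Your argument is correct and is exactly the standard spectral averaging proof that the paper implicitly relies on (it does not give a proof itself, but points to \cite{Minami-1996} and references therein, where this Schur-complement/Poisson-kernel argument is the one used). The key identity $G_{[u,v]}(z;x,x)=\bigl(a_x-z-b_{x-1}^2 G_{[u,x-1]}(z;x-1,x-1)-b_x^2G_{[x+1,v]}(z;x+1,x+1)\bigr)^{-1}$ makes $S(z)$ manifestly independent of $a_x$ and Herglotz, so no obstacle there. One small remark on the final step: the paragraph in the paper that you cite for the approximation of $\one_I$ by elements of $\mathcal A$ is stated under the hypothesis $\E[\xi_n(dx)]\le C\,dx$, so invoking it verbatim would be circular; but your own Fatou route is sound once you pass to a subsequence of $\{f_k\}$ that converges to $\one_I$ pointwise a.e.\ as well as in $L^1$, which together with $\E[\xi_n(f_k)]\le M_A\|f_k\|_{L^1}\to M_A|I|$ gives the bound. (Equivalently one can use the Stieltjes inversion formula for the finite intensity measure $\E[\xi_n(\cdot)]$, which avoids any talk of subsequences.)
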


The following result shows that $\xi_n$ is well approximated by the sum of $\{\eta_{n, p}\}_p$.
\begin{lemma}\label{lem:approximate-xi-n}
Assume that Conditions A, B and G hold. Then for all $\zeta \in \C_+$,
\[
	\sum_p \eta_{n, p} (f_\zeta) - \xi_n(f_\zeta) \to 0 \text{ as $n\to \infty$ in $L^1$ and in probability.}
\]
\end{lemma}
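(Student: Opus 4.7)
The plan is to express the difference as a trace and localize it to boundary interactions. From the identity $\xi_n(f_\zeta) = \frac{1}{n}\Image \Tr G_n(z)$ with $z = E + \zeta/n$ noted in the text, one has $\sum_p \eta_{n,p}(f_\zeta) = \frac{1}{n}\Image \Tr G_D(z)$ where $G_D := \bigoplus_p G_{C_p}$ is the resolvent of the block-diagonal matrix $J_D := \bigoplus_p J_n^{C_p}$, so the claim reduces to showing
\[
	\frac{1}{n}\Image \Tr\bigl(G_D(z) - G_n(z)\bigr) \to 0
\]
in $L^1$. Since $B := J_n - J_D$ is supported on only $m - 1 \sim n^{1-\alpha}$ off-diagonal pairs $(v_p, v_p+1)$ with entries $b_{v_p}$, the resolvent identity $G_D - G_n = G_D B G_n$ combined with trace cyclicity produces the boundary-sum representation
\[
	\Tr(G_D - G_n) = \sum_{p=1}^{m-1} b_{v_p}\Bigl(\sum_{y \in C_p} G_{C_p}(v_p, y) G_n(y, v_p+1) + \sum_{y \in C_{p+1}} G_{C_{p+1}}(v_p+1, y) G_n(y, v_p)\Bigr).
\]

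To control each inner sum, I would pair the exponential decay of $G_{C_p}(v_p, \cdot)$ (and $G_{C_{p+1}}(v_p+1, \cdot)$) from Condition G with a second resolvent identity decoupling $J_n$ at the bond $(v_p, v_p+1)$. This yields, for $y \in [1, v_p]$,
\[
	G_n(y, v_p+1) = -G_{[1, v_p]}(y, v_p)\, b_{v_p}\, G_n(v_p+1, v_p+1),
\]
turning the full-matrix Green's function into three pieces: a boundary-to-interior resolvent $G_{[1, v_p]}(y, v_p)$ controlled by Condition G, a diagonal entry $G_n(v_p+1, v_p+1)$ controlled in imaginary part by Wegner's estimate (Lemma~\ref{lem:Wegner}), and $b_{v_p}$ controlled in the $T$-th moment by Condition B. The subadditivity $|\sum a|^s \le \sum|a|^s$ for $0<s<1$ together with the interpolation $|G| \le (n/\tau)^{1-s}|G|^s$ coming from the deterministic bound $|G| \le n/\tau$ upgrades the fractional-moment bound of Condition G to an $L^1$ bound that is geometrically summable in $y \in C_p$, yielding a uniform-in-$p$ bound of order $O(1)$ on the $L^1$ norm of each boundary contribution.

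Summing over the $m - 1 \sim n^{1-\alpha}$ boundaries and dividing by $n$ then produces an $L^1$ bound of order $n^{-\alpha} \to 0$; convergence in probability follows by Markov's inequality. The main obstacle will be that the random variables entering each boundary contribution are not all mutually independent, since $G_n(v_p+1, v_p+1)$ depends on all entries of $J_n$ (including the coupling bond $b_{v_p}$ and the entries inside $C_p$). The cleanest way to disentangle them is via the Schur-complement representation
\[
	G_n(v_p+1, v_p+1) = \frac{G_{[v_p+1, n]}(v_p+1, v_p+1)}{1 - b_{v_p}^2\, G_{[1, v_p]}(v_p, v_p)\, G_{[v_p+1, n]}(v_p+1, v_p+1)},
\]
in which the three pieces $G_{[1, v_p]}$, $G_{[v_p+1, n]}$ and $b_{v_p}$ are mutually independent. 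Keeping the denominator bounded away from zero via the positivity of $\Image z$, and balancing the polynomial factors of $n/\tau$ against the exponential decay provided by Condition G, is the delicate technical point of the argument.
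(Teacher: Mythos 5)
Your reduction to $\frac{1}{n}\Image\Tr\bigl(G_D(z)-G_n(z)\bigr)\to 0$ and the resolvent identity $G_D-G_n=G_D B G_n$ match the paper's starting point exactly. The paper then re-assembles this trace identity as $\frac{1}{n}\sum_p\sum_{x\in C_p}\bigl(\Image G_{C_p}(z;x,x)-\Image G_n(z;x,x)\bigr)$ and, crucially, splits the $x$-sum inside each cell into an interior part (distance $>2\log n/\tilde\gamma$ from the cell edges) and a boundary part of size $O(\log n)$. For interior $x$, the one-step resolvent expansion plus H\"older plus Condition~G gives $O(n^{2-\varepsilon}e^{-\tilde\gamma(x-u)})$ which the cutoff forces below $n^{-\varepsilon}$. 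For the $O(n^{1-\alpha}\log n)$ boundary sites, the paper does \emph{not} expand the resolvent at all; instead it uses that both $\Image G_{C_p}(x,x)$ and $\Image G_n(x,x)$ are nonnegative and individually bounded in expectation by $M_A\pi$ (Wegner), so each boundary term is $O(1)$ in $L^1$ with no $n/\tau$ factors.

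Your argument has a genuine gap at precisely this point. You claim a uniform $O(1)$ $L^1$ bound per boundary bond, but this cannot be obtained from the expanded formula. For $y$ within distance $O(\log n)$ of the cut $v_p$, the exponential decay of $G_{C_p}(v_p,y)$ and $G_{[1,v_p]}(y,v_p)$ is of order $1$, while the deterministic bound on these Green's functions is $n/\tau$; after the interpolation $|G|\le(n/\tau)^{1-s}|G|^s$, the residual polynomial factor $(n/\tau)^{1-s}$ survives, and the outer factor $b_{v_p}^2\,|G_n(v_p+1,v_p+1)|$ contributes at least one further power of $n/\tau$ in any direct estimate (a Ward identity $\sum_y|G(v,y)|^2 = \Image G(v,v)/\Image z$ gives the same conclusion). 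Summing over $n^{1-\alpha}$ bonds and dividing by $n$ then diverges. Relatedly, you invoke Wegner's estimate to control $G_n(v_p+1,v_p+1)$, but Lemma~\ref{lem:Wegner} bounds $\E[\Image G(j,j)]$, not $\E[|G(j,j)|]$; in a product of Green's functions the full complex entry appears, and its modulus is only controlled by the trivial $1/\Image z = n/\tau$ or by fractional moments $s<1$. The real cancellation that makes the boundary manageable — that the \emph{difference of imaginary parts} of two diagonal resolvent entries is bounded by the sum of two Wegner-controlled nonnegative quantities — is available only \emph{before} expanding $G_D-G_n$ into off-diagonal products, which is why the paper restricts the resolvent expansion to the interior $x$. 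To repair your write-up, you should adopt this interior/boundary dichotomy on the $x$-sum before applying the resolvent identity, and your Schur-complement decoupling, while correct, becomes unnecessary.
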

\begin{proof}
	This is a generalization of Step~3 in \cite{Minami-1996} to the case of Jacobi matrices.	We begin with the following expression 
	\begin{equation}\label{explicit-formula}
		\sum_p \eta_{n, p} (f_\zeta) - \xi_n(f_\zeta)  = \frac{1}{n} \sum_p\sum_{x\in C_p} \left( \Image G_{C_p}(z; x, x) - \Image G_n(z; x,x) \right), 
	\end{equation}
where $z = E + \frac{\zeta}{n}$. For simplicity of notations, let $C_p = [u,v]$. It follows from the resolvent equation that
\begin{align*}
	&G_{C_p}(z; x, x) - G_n(z; x, x) \nonumber\\
	&= G_n(z; x, u - 1) b_{u - 1} G_{[u,v]}(z; u, x) +  G_n(z; x, v + 1) b_{v} G_{[u,v]}(z; v, x). \label{resolvent-eq-Cp}
\end{align*}
Here by setting $b_0 := 0$ and $b_n := 0$, the first term or the second term vanishes when $u = 1$ or $v = n$. We bound the first term as follows 
\begin{align*}
	&\E[|G_n(z; x, u - 1) b_{u - 1} G_{[u,v]}(z; u, x)|] \\
	&\le \frac{1}{|\Image z|^{2 - \varepsilon}} \E[b_{u - 1} |G_{[u, v]}(z; u, x)|^\varepsilon] \\
	&\le \frac{1}{|\Image z|^{2 - \varepsilon}}  \E[b_{u - 1}^T]^{1/T} \E[|G_{[u,v]}(z; u, x) |^{\varepsilon q}]^{1/q} \\
	&= \frac{1}{|\Image z|^{2 - \varepsilon}}  \E[b_{u - 1}^T]^{1/T} \E[|G_{[u,v]}(z; u, x) |^{s}]^{1/q}\\ 
	&\le \frac{1}{|\Image z|^{2 - \varepsilon}}  M_B^{1/T} (M_s e^{-\gamma_s(x - u)})^{1/q}\\ 
	&= \frac{1}{|\Image z|^{2 - \varepsilon}} \tilde M e^{-\tilde \gamma (x - u)}.
\end{align*}
Here $q$ is the H\"older conjugate number of $T$, that is, $T^{-1} + q^{-1} = 1$, $\varepsilon = s/q$, and $\tilde M$ and $\tilde \gamma$ are positive constants. We have used H\"older's inequality and trivial estimates that $|G_n(z; x, u - 1| \le 1/\Image z$, and $|G_{[u, v]}(z; u, x)| \le 1/\Image z$. Thus, for $z = E + \zeta / n$, 
\[
	\E[|G_n(z; x, u - 1) b_{u - 1} G_{[u,v]}(z; u, x)|] \le  \hat M n^{2 - \varepsilon} e^{-\tilde \gamma (x - u)} \le \hat M n^{-\varepsilon},
\]
if $x - u > 2 \log n / \tilde \gamma$. Now let 
\begin{align*}
	\interior(C_p) &= \interior([u,v]) := (u + 2 \log n / \tilde \gamma, v - 2 \log n/\tilde \gamma),\\
	\partial(C_p) &= C_p \setminus \interior (C_p).
\end{align*}
With these notations, we see that
\[
	\frac{1}{n}\sum_p \sum_{x \in \interior (C_p)} \E[|G_n(z; x, u - 1) b_{u - 1} G_{[u,v]}(z; u, x)|]  \to 0 \text{ as } n \to \infty.
\]
The second term can be estimated in the same way. Consequently,  
\[
	\frac{1}{n}\sum_p \sum_{x \in \interior(C_p)} |\E[\Image G_{C_p}(z; x, x)] - \E[\Image G_{n}(z; x,x)]| \to 0 \text{ as } n \to \infty.  
\]

For $x \in \partial(C_p)$, note that the expectation of each summand in \eqref{explicit-formula} is bounded by $2 M_A \pi$ by Lemma~\ref{lem:Wegner}. Thus 
\begin{align*}
	&\frac{1}{n}\sum_p \sum_{x \in \partial(C_p)} |\E[\Image G_{C_p}(z; x, x)] - \E[\Image G_{n}(z; x,x)]| \\
	& \le \frac{2 M_A \pi}{n} \sum_p {\# \partial(C_p)} \to 0 \text{ as } n \to \infty.
\end{align*}	
The proof of Lemma~\ref{lem:approximate-xi-n} is complete. 
\end{proof}

Let $\eta_n = \sum_p \eta_{n, p}$. Then $\E[\eta_n (dx)] \le \pi M_A dx$ by Lemma~\ref{lem:Wegner}. Moreover, Lemma~\ref{lem:approximate-xi-n} implies that 
\[
	\xi_n(f) - \eta_n(f) \to 0 \text{ in $L^1$ and in probability,}
\]
for all $f \in \cA$. Thus, $\xi_n$ and $\eta_n$ have the same limit by taking into account Lemma~\ref{lem:wc-criterion}.

\begin{corollary}
Assume that Conditions A, B, G and L hold. Then for any bounded interval $I$,
\[
	\xi_n(I) - \eta_n (I) \to 0 \text{ in $L^1$ as $n \to \infty$.}
\]
Consequently, 
\begin{equation}\label{Poisson-0}
	\E[\eta_n (I)] = \sum_p \E[\eta_{n, p}(I)] \to \theta |I| \as n \to \infty.
\end{equation}

\end{corollary}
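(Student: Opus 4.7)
The plan is to deduce the indicator-function statement from the already-established version for the test class $\cA$ by a standard density argument, exploiting the uniform bound on intensities.

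First I would invoke the $L^1$-approximation of $\one_I$ by functions in $\cA$ mentioned earlier in the section (following Condition L$'$): there exist $f_k = \sum_j \alpha_{k,j} f_{\zeta_{k,j}} \in \cA$ with $\|f_k - \one_I\|_{L^1(\R)} \to 0$. Next, since $\xi_n$ and $\eta_n$ are point processes, for any measurable $g$ one has $|\xi_n(g)| \le \xi_n(|g|)$, so applying this to $g = f_k - \one_I$ and taking expectations gives
\[
	\E\,|\xi_n(f_k) - \xi_n(\one_I)| \;\le\; \int_\R |f_k - \one_I|\,\E[\xi_n(dx)] \;\le\; M_A \|f_k - \one_I\|_{L^1(\R)},
\]
by Wegner's estimate (Lemma~\ref{lem:Wegner}). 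The same bound holds with $\eta_n$ in place of $\xi_n$, since each $\eta_{n,p}$ is built from a submatrix of $J_n$ and Lemma~\ref{lem:Wegner} applies uniformly over sub-intervals, giving $\E[\eta_n(dx)] \le M_A dx$.

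Now I would combine these by the triangle inequality:
\[
	\E\,|\xi_n(I) - \eta_n(I)| \;\le\; \E\,|\xi_n(\one_I) - \xi_n(f_k)| + \E\,|\xi_n(f_k) - \eta_n(f_k)| + \E\,|\eta_n(f_k) - \eta_n(\one_I)|.
\]
The first and third terms are bounded by $M_A\|f_k - \one_I\|_{L^1}$ uniformly in $n$; the middle term tends to $0$ as $n \to \infty$ by Lemma~\ref{lem:approximate-xi-n} applied linearly to the finite sum $f_k = \sum_j \alpha_{k,j} f_{\zeta_{k,j}}$. Therefore
\[
	\limsup_{n \to \infty} \E\,|\xi_n(I) - \eta_n(I)| \;\le\; 2 M_A \|f_k - \one_I\|_{L^1(\R)},
\]
and letting $k \to \infty$ yields $\E\,|\xi_n(I) - \eta_n(I)| \to 0$, i.e., $L^1$-convergence.

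The consequence is then immediate: $|\E[\eta_n(I)] - \E[\xi_n(I)]| \le \E\,|\xi_n(I) - \eta_n(I)| \to 0$, while Condition L gives $\E[\xi_n(I)] \to \theta |I|$; hence $\E[\eta_n(I)] = \sum_p \E[\eta_{n,p}(I)] \to \theta|I|$. There is no genuine obstacle here; the only subtlety worth flagging is verifying that Wegner's bound indeed applies uniformly to every sub-block $J_n^{C_p}$ (so that $\E[\eta_n(dx)] \le M_A dx$), but this follows directly from Lemma~\ref{lem:Wegner} since the hypothesis on the densities $\rho_i$ is uniform over $i$.
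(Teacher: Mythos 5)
Your proposal is correct and follows essentially the same route as the paper: the paper's own proof is omitted with the remark that it is "similar to the previous proof of deriving Condition L from Condition L$'$," which is exactly the $L^1$-approximation of $\one_I$ by $f_k \in \cA$ combined with the uniform intensity bound from Wegner's estimate (applied to $\xi_n$ and, block-by-block, to $\eta_n$) and Lemma~\ref{lem:approximate-xi-n} for the middle term; your triangle-inequality decomposition and the final passage to $\E[\eta_n(I)] \to \theta|I|$ via Condition~L match what the authors intend.
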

\begin{proof}
The proof is omitted because it is similar to the previous proof of deriving Condition~L from Condition L$'$.

\end{proof}

Finally, the negligibility of the point processes $\{\eta_{n, p}\}_p$ is governed by Minami's estimate. The following statement is a trivial extension of the equation (2.53) in \cite{Minami-1996} to the case of general Jacobi matrices.
\begin{lemma}[Minami's estimate]
Assume that Condition A holds. Then for any bounded interval $I$,
	\begin{equation}\label{Poisson-2}
		\sum_{p}\sum_{j \ge 2} \Prob(\eta_{n,p}(I) \ge j) \to 0 \as n \to \infty.
	\end{equation}
\end{lemma}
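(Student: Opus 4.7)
The plan is to reduce the lemma to Minami's trace inequality applied to each restricted block $J_n^{C_p}$, and then sum. Recall that the blocks $C_1,\dots,C_m$ have length $L_p = |C_p| \sim n^\alpha$ for some fixed $0<\alpha<1$, so $m \lesssim n^{1-\alpha}$.

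The first step is to establish, for each $p$ and any Borel set $B \subset \R$, the Minami-type bound
\[
	\E\bigl[\Tr \one_B(J_n^{C_p})\,(\Tr \one_B(J_n^{C_p}) - 1)\bigr] \le (\pi M_A L_p |B|)^2.
\]
This is exactly Eq.~(2.53) of \cite{Minami-1996} transferred to the Jacobi setting. Minami's original proof is a spectral-averaging argument carried out over the diagonal entries $\{a_i\}_{i \in C_p}$, whose independent densities are uniformly bounded by $M_A$ (Condition~A). The subdiagonal entries $\{b_i\}$ play only a passive role and can be handled by conditioning; no assumption on their distribution is used. This is why the lemma requires Condition~A only.

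The second step is the elementary identity
\[
	\sum_{j \ge 2} \Prob(\eta_{n,p}(I) \ge j) = \E[(\eta_{n,p}(I) - 1)_+] \le \E[\eta_{n,p}(I)(\eta_{n,p}(I) - 1)],
\]
where the inequality uses $(k-1)_+ \le k(k-1)$ for every non-negative integer $k$. Taking $B = E + I/n$, an interval of length $|I|/n$, in the Minami bound of the first step gives
\[
	\E[\eta_{n,p}(I)(\eta_{n,p}(I) - 1)] \le \bigl(\pi M_A L_p |I|/n\bigr)^2 \le C |I|^2 n^{2\alpha - 2}.
\]

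Finally, summing over the $m \lesssim n^{1-\alpha}$ blocks,
\[
	\sum_p \sum_{j \ge 2} \Prob(\eta_{n,p}(I) \ge j) \le m \cdot C |I|^2 n^{2\alpha - 2} \lesssim |I|^2\, n^{\alpha - 1} \xrightarrow[n \to \infty]{} 0,
\]
since $\alpha < 1$. The whole argument hinges on Step~1; but because the underlying Minami inequality is a purely local statement about the diagonal disorder, extending it from discrete Schr\"odinger operators to Jacobi matrices is essentially formal (by conditioning on the $\{b_i\}$), which matches the authors' remark that this is a trivial extension of \cite{Minami-1996}. The choice $0 < \alpha < 1$ is exactly what balances the factor $(L_p/n)^2 \sim n^{2(\alpha-1)}$ against the number of blocks $\sim n^{1-\alpha}$, so the total is $n^{\alpha - 1} \to 0$.
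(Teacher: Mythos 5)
Your proof is correct and supplies exactly the argument the paper leaves implicit by citing Eq.~(2.53) of \cite{Minami-1996}: the blockwise Minami trace inequality (which uses only the uniform bound $M_A$ on the diagonal densities, so Condition~A suffices and the off-diagonal entries $b_i$ are handled by conditioning), combined with the identity $\sum_{j\ge 2}\Prob(\eta\ge j)=\E[(\eta-1)_+]\le\E[\eta(\eta-1)]$ and the count $m\cdot(\pi M_A L_p|I|/n)^2\lesssim n^{\alpha-1}\to 0$ since $0<\alpha<1$. The paper calls the lemma a ``trivial extension'' and omits the proof; your write-up is precisely that extension and matches the intended route.
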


Minami's estimate, together with the equation~\eqref{Poisson-0}, yields 
\begin{equation}\label{Poisson-1}
	\sum_p \Prob(\eta_{n,p} (I) \ge 1) \to \theta |I| \as n \to \infty.
\end{equation}
Therefore, $\eta_n$, as the sum of independent negligible point processes $\{\eta_{n,p}\}_p$, converges weakly to a Poisson point process with intensity $\theta$ by \cite[Theorem~9.2.V]{Daley-Vere-Jones-1988}. Consequently, the point process $\xi_n$ also converges weakly to that Poisson point process because $\{\xi_n\}$ and $\{\eta_n\}$ have the same limit. The proof of Theorem~\ref{thm:Poisson-main} is complete.

\section{Exponential decay of Green's functions}\label{sec:G}
In this section, we consider the case when $\{a_i\}_{i = 1}^\infty$ is an i.i.d.~sequence of random variables. The sequences $\{\{b_i\}_{i = 1}^{n-1} \}_n$ may depend on $n$. We show that under Conditions A, B and an additional condition on the  regularity of the common probability density functions of $\{a_i\}$, the exponential decay of  Green's functions holds, that is, Condition G automatically holds.

A random variable with probability density function $\rho$ is said to be fluctuation regular if there are positive constants $\varepsilon, \delta$, and a measurable set $R \subset \R$ with $\int_R \rho(a) da > 0$ such that for any $a \in R$, and all $x_1, x_2 \in (a - \varepsilon, a + \varepsilon)$, 
\[
	\frac{\rho(x_1)}{\rho(x_2)} \ge \delta.
\]

\begin{theorem}[cf.~{\cite[Theorem~4]{Schenker-2009}}] \label{thm:Schenker-4}
Assume that Conditions A and B hold. Moreover, assume that the common probability density function of  the i.i.d.~sequence $\{a_i\}_{i = 1}^\infty$ is fluctuation regular. Then for $0 < s < 1$ and $\Lambda > 0$, there are positive constants $M$ and $\gamma$ such that for all $\lambda \in [- \Lambda, \Lambda]$, 
\[
	\E[|G_n(\lambda; x, y)|^s] \le M e^{-\gamma |x - y|}.
\]
\end{theorem}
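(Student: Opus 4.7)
The statement is the Jacobi-matrix analog of Theorem~4 of \cite{Schenker-2009}, so the plan is to verify that Schenker's hypotheses hold in our setting and then to import the fractional-moment operator-method machinery developed there. The argument has three parts: (i) a uniform a priori bound on $\E[|G_n(\lambda;x,y)|^s]$, (ii) a per-block contraction estimate using fluctuation regularity, and (iii) iteration to produce exponential decay.

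First I would establish the a priori fractional-moment Wegner bound: there is a constant $M_0 = M_0(s,\Lambda)$ such that $\E[|G_n(\lambda;x,y)|^s] \le M_0$ uniformly in $n$, $x, y$, and $\lambda \in [-\Lambda,\Lambda]$. For $x=y$, the Schur-complement representation $G_n(x,x) = (a_x - \lambda - \Sigma_x)^{-1}$, with $\Sigma_x$ independent of $a_x$, combined with Condition~A, yields $\E[|G_n(x,x)|^s] \le M_A \int |a - c|^{-s}\, da$ on a suitable truncated domain, which is finite for $s<1$. The off-diagonal case reduces to the diagonal one by a rank-two Krein/resolvent argument in the variables $a_x$ and $a_y$.

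Second, I would derive the one-block depletion estimate, which is the heart of Schenker's method. The resolvent identity between $J_n$ and the decoupled matrix $J_{[1,k]}\oplus J_{[k+1,n]}$ gives, for $x \le k < y$, the tridiagonal factorization
\[
G_n(\lambda;x,y) \;=\; -\,G_{[1,k]}(\lambda;x,k)\, b_k\, G_n(\lambda;k+1,y).
\]
Raising to the $s$-th power and applying H\"older's inequality with conjugate exponents $T$ and $q$ to absorb $b_k^s$ via Condition~B (in the spirit of the estimate inside the proof of Lemma~\ref{lem:approximate-xi-n}), then averaging over the potentials $\{a_j\}$ in a fixed-length window via the fluctuation-regularity decoupling lemma of \cite{Schenker-2009}, produces a bound of the shape $\E[|G_n(\lambda;x,y)|^s] \le \eta\, \E[|G_{n'}(\lambda;x',y')|^s]$ with a contraction factor $\eta<1$ and $|x'-y'|\le |x-y|-L$ for some fixed block length $L$. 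Iterating this estimate $\lfloor |x-y|/L\rfloor$ times and absorbing the residual fractional moment via the a priori bound of step one yields the claim with $\gamma = -L^{-1}\log\eta$.

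The main obstacle is securing the contraction $\eta<1$ in step two. For discrete Schr\"odinger operators ($b_i\equiv 1$) this follows almost directly from Aizenman--Molchanov decoupling; here, however, the random off-diagonal entries $b_i$ couple the potentials at neighboring sites and, moreover, are allowed to depend on $n$ (subject only to Condition~B). Schenker's operator method is precisely tailored to handle this coupling by recasting the recursion as the spectral radius of an integral operator whose kernel combines the $b$-moments with the fractional resolvent moments. Fluctuation regularity is what provides the gain needed to force this spectral radius strictly below one on a sufficiently large but fixed window, uniformly in $\lambda\in[-\Lambda,\Lambda]$ and in the $n$-dependent joint law of the $\{b_i\}$.
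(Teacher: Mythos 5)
Your proposal is correct and follows essentially the same route as the paper: both defer to Schenker's Theorem~4 and the fractional-moment operator method, and both identify Condition~B as supplying the control on the random off-diagonals $b_i$ that Schenker's argument needs. The paper is terser, noting only that the single hypothesis to re-verify is Schenker's uniform estimate (2.46), which follows from Markov's inequality $\Prob(b_i>b)\le M_B/b^T$; your sketch reconstructs the a priori bound, the resolvent factorization, and the depletion/iteration scheme in more detail, but this is exposition of the same imported argument rather than a new one.
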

\begin{proof}
The proof follows the same lines as that of Theorem~4 in \cite{Schenker-2009}. The only thing we need to verify is the uniform estimate in (2.46). But it is an easy consequence of Condition~B because for any $b > 0$, 
\[
	\Prob(b_i > b) \le \frac{1}{b^T} \E[b_i ^T] \le \frac{M_B}{b^T}. \qedhere
\]
\end{proof}

\begin{theorem} \label{thm:exponential-decay}
Under the same assumptions as in Theorem~{\rm\ref{thm:Schenker-4}}, for $0 < s < \frac{1}{2}$, 
\[
	\E[|G_n(z; y, x)|^{s/2}] \le  M_s e^{-\frac \gamma 2 |y - x|}, \text{ for } y \in \{1, n\}, x \in [1, n], 
\]
and for all $z = \lambda + i \tau, \lambda \in [-\Lambda, \Lambda]$.
Here $\gamma$ is the constant in Theorem~{\rm\ref{thm:Schenker-4}}.	
\end{theorem}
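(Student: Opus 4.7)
The plan is to reduce the complex-energy bound to Theorem~\ref{thm:Schenker-4} via the subharmonicity of $|G_n(\,\cdot\,;y,x)|^{s/2}$ on $\C_+$. For any realization of $J_n$, the map $z\mapsto G_n(z;y,x)$ is a rational function, holomorphic on $\C_+$ with simple poles on $\R$ at the eigenvalues of $J_n$, so $u(z):=|G_n(z;y,x)|^{s/2}$ is subharmonic on $\C_+$. Since $s/2<1/4<1$, the boundary values $u(\lambda)=|G_n(\lambda;y,x)|^{s/2}$ are locally integrable on $\R$ (the singularity at each eigenvalue behaves like $|\lambda-\lambda_k|^{-s/2}$) and decay like $|\lambda|^{-s/2}$ at infinity. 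An approximation argument --- for instance, first restricting $u$ to the shifted half-plane $\{\Image z>\varepsilon\}$, where $|G_n|\le1/\varepsilon$ makes $u$ bounded, invoking the classical Poisson representation there, and then letting $\varepsilon\to0$ by dominated convergence using the integrable boundary behaviour --- yields the inequality
\[
|G_n(\lambda_0+i\tau;y,x)|^{s/2} \le \frac{1}{\pi}\int_\R \frac{\tau}{(\lambda-\lambda_0)^2+\tau^2}\,|G_n(\lambda;y,x)|^{s/2}\,d\lambda,
\qquad \lambda_0+i\tau\in\C_+.
\]

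Taking expectations, Fubini and Jensen's inequality (concavity of $t\mapsto t^{1/2}$) give
\[
\E\bigl[|G_n(\lambda_0+i\tau;y,x)|^{s/2}\bigr] \le \frac{1}{\pi}\int_\R \frac{\tau}{(\lambda-\lambda_0)^2+\tau^2}\,\bigl(\E[|G_n(\lambda;y,x)|^{s}]\bigr)^{1/2}\,d\lambda.
\]
For $\lambda\in[-\Lambda,\Lambda]$, Theorem~\ref{thm:Schenker-4} applied with exponent $s\in(0,1/2)\subset(0,1)$ bounds $\E[|G_n(\lambda;y,x)|^{s}]\le Me^{-\gamma|y-x|}$, so the integrand is at most $M^{1/2}e^{-\gamma|y-x|/2}$ times the Poisson kernel; since the kernel has total mass $1$ over $\R$, this contribution is bounded by $M^{1/2}e^{-\gamma|y-x|/2}$. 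For $|\lambda|>\Lambda$, the tail is handled by the deterministic estimate $|G_n(\lambda;y,x)|\le 2/|\lambda|$ valid on the event $\{\|J_n\|\le|\lambda|/2\}$, together with a Chebyshev control on $\|J_n\|$ coming from Condition~B applied to the off-diagonal entries; this tail is absorbable into the final constant $M_s$.

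The main obstacle is the rigorous justification of the Poisson integral inequality above. Because $G_n(\,\cdot\,;y,x)$ has genuine poles on $\R$, the function $|G_n|^{s/2}$ does not belong to any standard Hardy space on $\C_+$, and the classical $H^p$ Poisson representation is not directly available. The hypothesis $s<1/2$, equivalently $s/2<1$, is used precisely to ensure that the boundary singularities at the eigenvalues remain integrable, so that the $\varepsilon$-shift (or equivalent truncation) approximation produces a finite limit and passes correctly from the bounded subharmonic case to the singular one.
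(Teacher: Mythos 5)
Your plan differs fundamentally from the paper's, and unfortunately it has a genuine gap that I do not see how to repair within the proposed framework. The paper does not use subharmonicity or any Poisson representation. Instead, it exploits the explicit factorization of the \emph{corner} Green's function,
\[
G_n(z;1,n)=-\frac{b_1\cdots b_{n-1}}{\det(z-J_n)},
\]
whose numerator has no zeros in $z$. Since all eigenvalues $\lambda_j$ of $J_n$ are real, this gives the \emph{deterministic, pointwise} monotonicity
\[
|G_n(\lambda+i\tau;1,n)|=\frac{b_1\cdots b_{n-1}}{\prod_j\bigl((\lambda_j-\lambda)^2+\tau^2\bigr)^{1/2}}\le\frac{b_1\cdots b_{n-1}}{\prod_j|\lambda_j-\lambda|}=|G_n(\lambda;1,n)|,
\]
so $\E[|G_n(\lambda+i\tau;1,n)|^{s}]\le\E[|G_n(\lambda;1,n)|^{s}]$ immediately reduces the complex-energy estimate to the real-energy estimate of Theorem~\ref{thm:Schenker-4}, with no boundary integral at all. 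For $x<n$ the resolvent identity $G_n(z;1,x)=G_x(z;1,x)(1+b_xG_n(z;x+1,1))$ and a Hölder step on the second factor (requiring $\E[|G_n(z;x+1,1)|^{2s}]<\infty$ via Lemma~\ref{lem:bounded}, which is exactly where $s<1/2$ is used) finish the proof. Note that the restriction $y\in\{1,n\}$ in the theorem is precisely what makes this factorization available; for interior rows the numerator has zeros and the monotonicity fails.

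The gap in your argument is in the tail of the Poisson integral. After Jensen you need
\[
\int_\R P_\tau(\lambda-\lambda_0)\,\bigl(\E[|G_n(\lambda;y,x)|^s]\bigr)^{1/2}\,d\lambda \le M_s\,e^{-\gamma|y-x|/2},
\]
but Theorem~\ref{thm:Schenker-4} only delivers the decay $\E[|G_n(\lambda;y,x)|^s]\le Me^{-\gamma|y-x|}$ for $\lambda\in[-\Lambda,\Lambda]$. On $\{|\lambda|>\Lambda\}$ your proposed control on the event $\{\|J_n\|>|\lambda|/2\}$ yields (after Hölder with Lemma~\ref{lem:bounded}) a term bounded by a quantity that is \emph{independent of} $|y-x|$, so the tail contributes an additive constant that cannot be absorbed into $M_s e^{-\gamma|y-x|/2}$ when $|y-x|$ is large. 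You cannot fix this by enlarging $\Lambda$ with $|y-x|$, since the constants $M,\gamma$ in Theorem~\ref{thm:Schenker-4} themselves depend on $\Lambda$, nor by shrinking the Poisson tail, since for large $\tau$ (which the theorem must cover) the Poisson kernel places mass $O(1)$ on $\{|\lambda|>\Lambda\}$. Moreover, $\Prob(\|J_n\|>R)$ only has a union-bound estimate carrying a factor of $n$, violating the paper's requirement of $n$-independent constants, and Condition~A gives no moment control on the $a_i$ at all. Finally, your explanation of the role of $s<1/2$ (integrability of the boundary singularities) is off the mark---any $s<2$ would suffice for that; the true constraint comes from needing $2s<1$ in the Hölder step of the paper's resolvent-identity argument.
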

\begin{remark}
	Since all constants here do not depend on $n$, the result holds for any restriction of $J_n$ on an interval $[u,v]$, that is,
	\[
	\E[|G_{[u, v]}(z; y, x)|^{s/2}] \le M_s e^{-\frac \gamma 2 |y - x|}, \text{ for } y \in \{u, v\}, x \in [u, v], 
\]
and for all $z = \lambda + i \tau, \lambda \in [-\Lambda, \Lambda]$, which implies Condition G.
\end{remark}

\begin{lemma}[cf.~{\cite[Lemma~5]{Graf-1994}}] \label{lem:bounded} Assume that Condition A holds. Then 
for $0<s<1$,  
\[
	\E[|G_n(z; x, y)|^s] \le C_s, \text{ for } x, y \in [1, n], \text{and for all }z \in \C,
\]
where $C_s$ is a constant which depends only on $s$ and $M_A$.
\end{lemma}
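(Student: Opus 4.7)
The plan is to reduce the off-diagonal case $x \ne y$ to the diagonal case $x = y$ through an algebraic identity and iteration. I start with the diagonal case. Since $a_x$ appears only at position $(x, x)$ of $J_n$, the determinant $\det(J_n - z)$ depends on $a_x$ affinely. Cramer's rule gives
\[
G_n(z; x, x) = \frac{\det((J_n - z)[\hat x, \hat x])}{\det(J_n - z)} = \frac{1}{a_x - w_x},
\]
where $w_x \in \C$ is a function of $z$ and of all entries of $J_n$ other than $a_x$, but not of $a_x$ itself. Conditioning on everything except $a_x$ and using that the density $\rho_x$ of $a_x$ is bounded by $M_A$, I split the integral about $w_x$ at radius one:
\[
\E\bigl[|a_x - w_x|^{-s} \bigm| \text{rest}\bigr] \le M_A \int_{|u| \le 1} |u|^{-s}\, du + \int \rho_x(a)\, da \le \frac{2 M_A}{1 - s} + 1 =: C_s.
\]
The unconditional expectation then gives the diagonal bound.

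For $x \ne y$ (WLOG $x < y$), Cramer's rule still applies: the minor $M_{yx}$ of $J_n - z$ (obtained by deleting row $y$ and column $x$) does not contain $a_x$, since $a_x$ sits in the deleted column. Hence one obtains $G_n(z; x, y) = c_{xy}/(a_x - w_x)$ with $c_{xy}$ and $w_x$ independent of $a_x$, and integrating over $a_x$ exactly as above yields
\[
\E\bigl[|G_n(z; x, y)|^s\bigm|\text{rest}\bigr] \le C_s \cdot |c_{xy}|^s.
\]
To bound $\E[|c_{xy}|^s]$, the Schur complement on the two-dimensional subspace $\mathrm{span}(e_x, e_y)$ expresses $c_{xy}$ in terms of $b_x$, $b_{y-1}$, and the off-diagonal Green's function of the strictly smaller sub-block $J_n[x+1, y-1]$, times a Möbius factor in $a_y$. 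Integrating out $a_y$ in the same manner reduces the problem to an off-diagonal Green's-function expectation on a smaller block, and an induction on $|x - y|$ closes the recursion. The base case $|x - y| = 0$ is precisely the diagonal bound already proved.

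The main obstacle is keeping the induction uniform. The Schur-complement step introduces factors of $b_x$ and $b_{y-1}$ which are not controlled by Condition A, and a naive estimate would leave uncontrolled moments of the $b_i$'s. The resolution, which goes back to Graf (1994), is the algebraic observation that these $b$-factors enter symmetrically into both the numerator of the bound and the intrinsic normalization of the inner Green's function $G^{[x+1, y-1]}(z;x+1,y-1)$, so that they cancel identically in the inductive step. Once this cancellation is carefully tracked, the induction produces a constant depending only on $s$ and $M_A$, as claimed.
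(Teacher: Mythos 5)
The paper gives no proof of this lemma, citing Graf (1994). Your blind proof correctly handles the diagonal case and correctly identifies the Schur complement over $\{x,y\}$ as the relevant algebraic structure, but the \emph{sequential} integration scheme (first integrate out $a_x$, then $a_y$, then induct) does not actually work under Condition~A alone, and the claimed ``cancellation of $b$-factors'' is not substantiated.

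Here is where the gap is. After integrating out $a_x$ you obtain $\E_{a_x}[|G_n(z;x,y)|^s] \le C_s |c_{xy}|^s$. From the $2\times 2$ Schur complement one finds
\[
	G_n(z;x,y) \;=\; \frac{-\gamma}{(a_x - \alpha)(a_y - \beta) - \gamma^2},\qquad
	\gamma = b_x\, G_{[x+1,y-1]}(z;x+1,y-1)\, b_{y-1},
\]
with $\alpha, \beta, \gamma$ independent of $a_x, a_y$, so $c_{xy} = -\gamma/(a_y - \beta)$. Integrating out $a_y$ via the one-dimensional fractional-moment estimate then produces $\E_{a_y}[|c_{xy}|^s] \le C_s' |\gamma|^s$. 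The factor $|\gamma|^s$ carries the moments $\E[(b_x b_{y-1})^s]$, which Condition~A does not control (Condition~B is not hypothesized here, and in any case controls only a single moment $\E[b^T]$). These $b$-factors do \emph{not} cancel against the inner Green's function: $G_{[x+1,y-1]}(z;x+1,y-1)$ is itself the object whose $s$-moment you are estimating in the inductive step, and it appears \emph{multiplied} by $b_x b_{y-1}$, not divided by them. So the recursion does not close, and the constant would depend on the laws of the $b_i$'s.

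The resolution is to integrate jointly over the pair $(a_x, a_y)$, conditionally on all other entries. One is then faced with the two-variable estimate
\[
	\iint_{\R^2} \frac{|\gamma|^s\, \rho_x(a)\rho_y(b)\, da\, db}{\bigl|(a-\alpha)(b-\beta)-\gamma^2\bigr|^s} \;\le\; C(s, M_A),
	\qquad \alpha,\beta,\gamma \in \C,
\]
uniformly in $\alpha,\beta,\gamma$ (and in $z$), which is the real content of the lemma. The point is that the self-energy $\gamma$ appears both in the numerator and (squared) inside the determinant in the denominator: when $|\gamma|$ is large, for typical $(a,b)$ one has $|D| \gtrsim |\gamma|^2$ so the integrand is $\lesssim |\gamma|^{-s}$, while for small $|\gamma|$ the numerator is small. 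Concretely, one splits $\R^2$ into the regions $|a-\alpha|\ge |\gamma|/2$ (integrate the 1D bound in $b$ with slope $a-\alpha$), $|a-\alpha|< |\gamma|/2$ and $|b-\beta|\ge |\gamma|/2$ (symmetric), and $|a-\alpha|,|b-\beta|<|\gamma|/2$ (where $|D|\ge \tfrac34|\gamma|^2$ and the measure of the region is $\le \min(M_A|\gamma|,1)^2$). Each piece is bounded by a constant depending only on $s$ and $M_A$, with no reference to the $b_i$'s. Your step-by-step scheme discards exactly the information that $\gamma^2$ also sits in the denominator, because the intermediate one-dimensional estimate only uses the $L^\infty$ bound on the density, not the location of the pole. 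Replacing the two sequential one-variable averages by the single two-variable average above is not a cosmetic change; it is the crux of the proof, and it removes the need for induction and for any $b$-cancellation argument.
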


\begin{proof}[Proof of Theorem~{\rm\ref{thm:exponential-decay}}]
Without loss of generality, assume that $y = 1$. Let us first consider the case $x = n$. For Jacobi matrices, we can easily check the following relation  
\[
	G_n(z; 1, n) =  - \frac{b_1 \cdots b_{n - 1}}{\det(z - J_n)}.
\]
Note that all eigenvalues $\lambda_1, \dots, \lambda_n$ of $J_n$ are real. Thus for any $\tau \in \R$,
	\begin{align*}
		|G_n(\lambda + i \tau; 1, n)| &= \frac{b_1 \cdots b_{n - 1}}{\Big| \prod_{j = 1}^n (\lambda_j - \lambda - i\tau) \Big|} = \frac{b_1 \cdots b_{n - 1}}{\Big| \prod_{j = 1}^n \big((\lambda_j - \lambda)^2 + \tau^2 \big) \Big|^{1/2}} \\
		&\le \frac{b_1 \cdots b_{n - 1}}{\Big| \prod_{j = 1}^n (\lambda_j - \lambda)^2  \Big|^{1/2}} = |G_n(\lambda; 1, n)|.
	\end{align*}
Consequently, 
\begin{equation*}\label{image-real}
\E[|G_n(\lambda + i \tau; 1, n)|^s] \le \E[|G_n(\lambda; 1, n)|^s],
\end{equation*}
and hence,
\begin{equation}\label{Gx-estimate}
	\E[|G_n(\lambda + i \tau; 1, n)|^s]  \le \E[|G_n(\lambda; 1, n)|^s] \le M e^{-\gamma (n - 1)},
\end{equation}
for $\lambda \in [-\Lambda, \Lambda]$, where $M$ and $\gamma$ are the constants in Theorem~\ref{thm:Schenker-4}. 

Next we consider the case $x < n$. Note that the above estimate still holds, if $n$ is replaced by $x$, namely, 
\[
	\E[|G_x(\lambda + i \tau; 1, x)|^s]  \le \E[|G_x(\lambda; 1, x)|^s] \le M e^{-\gamma (x - 1)},
\]
where $G_x(z)$ denotes the Green's function of $J_x$, the restriction of $J_n$ on $[1,x]$. Then the desired bound for $\E[|G_n(\lambda + i \tau; 1, x)|^s]$ follows by using the resolvent equation and H\"older's inequality. Indeed, the resolvent equation yields  
\[
	G_{n}(z; 1, x) = G_{x}(z; 1, x) (1 + b_x G_{n}(z; x+1, 1)). 
\]
Then by H\"older's inequality,
\[
	\E[|G_{n}(z; 1, x) |^{s/2}] \le \E[|G_{x}(z; 1, x)|^{s}]^{1/2} \E[|1 + b_x G_{n}(z; x+1, 1)|^{s}]^{1/2} .
\]
In addition, the second factor  is uniformly bounded, because 
\[
	\E[|1 + b_x G_{n}(z; x+1, 1)|^{s}]  \le (1 + \E[b_x^{2s}]^{1/2} \E[|G_{n}(z; x+1, 1)|^{2s}] ^{1/2} ).
\]
Note that we need the assumption that $s < 1/2$ here. Therefore, for all $\lambda \in [-\Lambda, \Lambda]$,  
\[
	\E[|G_{n}(z; 1, x) |^{s/2}] \le M_s e^{-\frac{\gamma}{2} (x - 1)},
\]
for some constant $M_s > 0$.
Theorem~\ref{thm:exponential-decay} is proved. 
\end{proof}

\section{Global law and local law}\label{sec:global-local}

\subsection{i.i.d. Jacobi matrices}
Let $\{a_i\}_{i \in \Z}$ be an i.i.d.~sequence of random variables and $\{b_i\}_{i \in \Z}$ be another i.i.d.~sequence positive random variables which is independent of $\{a_i\}$. Let $J$ be a doubly infinite Jacobi matrix formed from $\{a_i\}$ and $\{b_i\}$,
\begin{equation}\label{doubly-infinite-matrix}
	J = \begin{pmatrix}
		\ddots	&\ddots	&\ddots\\
		&b_0		&a_1		&b_1			\\
		&&b_1		&a_2		&b_2\\
		&&&\ddots	&\ddots	&\ddots\\
	\end{pmatrix}.
\end{equation}
The Jacobi matrix $J$ is regarded as an operator on $\ell^2(\Z)$ with a domain
\[
	\cD_0 = \{\psi = (\psi_i)_{i \in \Z} \in \ell^2(\Z) : \psi_i = 0 \text{ for all but finitely many $i\in \Z$}\}.
\]
Then $J$ is essentially self-adjoint almost surely because 
\[
	\sum_{i = 1}^\infty \frac{1}{b_i^2} = \sum_{i = -\infty}^{-1}\frac{1}{b_i^2} = \infty  \text{ (almost surely)},
\]
see \cite[p.~122]{Carmona-Lacroix}. Let $G(z)$ be the resolvent of $J$, $G(z) = (J - z)^{-1}, z \in \C_+$. Then there is a unique probability measure $\mu$ on $\R$, called the spectral measure of $(J, e_1)$, satisfying  
\[
	\int_\R \frac{\mu(dx)}{x - z} = (G(z) e_1, e_1) = G(z; 1,1), z \in \C_+.
\]
The left hand side of the above formula is the Stieltjes transform of $\mu$ which is denoted by $S_\mu(z)$ from now on.
Let $\bar \mu$ be the mean of $\mu$. Then 
\[
	S_{\bar \mu} (z) = \E[S_\mu(z)] = \E[G(z; 1,1)].
\]

Consider a sequence of finite Jacobi matrices
\[
	J_{n} = \begin{pmatrix}
		a_1		& b_1	\\
		b_1		&a_2			&b_2		\\
				&\ddots		&\ddots		&\ddots \\
				&			&b_{n - 1}			& a_n			
	\end{pmatrix}.
\] 
Let 
\[
	L_n = \frac{1}{n} \sum_{j = 1}^n \delta_{\lambda_j}
\]
be the empirical distribution of $J_n$, where $\lambda_1, \dots, \lambda_n$ are the eigenvalues of $J_n$. Then 
\[
	N_n(E) = \frac{1}{n} \# \{1 \le j \le n: \lambda_j \le E\}
\]
is nothing but the distribution function of $L_n$. The following result is well known as the existence of the integrated density of states (ids for short). 
\begin{theorem}\label{thm:ids}
	The empirical distribution $L_n$ converges weakly to $\bar \mu$ almost surely as $n$ tends to infinity. This means that for any bounded continuous function $f$, 
	\[
		\lf L_n, f\rt \to \lf \bar \mu, f\rt \text{ almost surely as } n \to \infty. 
	\]
Here recall that $\bra{\mu, f} := \int f d\mu$ for a probability measure $\mu$ and a measurable function $f$.
\end{theorem}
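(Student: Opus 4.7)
The plan is to establish almost-sure convergence of the Stieltjes transform $S_{L_n}(z) = \int(x-z)^{-1} L_n(dx) = n^{-1}\Tr(J_n-z)^{-1}$ to $S_{\bar\mu}(z) = \E[G(z;1,1)]$ at every $z\in\C_+$. Since both $L_n$ and $\bar\mu$ are probability measures, this implies a.s.\ weak convergence $L_n\Rightarrow \bar\mu$, which is exactly the stated convergence $\lf L_n,f\rt\to\lf\bar\mu,f\rt$ a.s.\ for every bounded continuous $f$.

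Fix $z\in\C_+$ and decompose
$$S_{L_n}(z)-S_{\bar\mu}(z) = \underbrace{\frac{1}{n}\sum_{i=1}^n\bigl(G_n(z;i,i)-G(z;i,i)\bigr)}_{=:\,I_n}\;+\;\underbrace{\Bigl(\frac{1}{n}\sum_{i=1}^n G(z;i,i)-\E[G(z;1,1)]\Bigr)}_{=:\,II_n}.$$
For $II_n$: the sequence $\{G(z;i,i)\}_{i\in\Z}$ is obtained by applying the measurable functional $\omega\mapsto G(\omega;z;1,1)$ to the $i$-fold index-shift of the i.i.d.\ input $\{(a_j,b_j)\}_{j\in\Z}$, so it is stationary and ergodic; each term is bounded in modulus by $(\Image z)^{-1}$, and Birkhoff's ergodic theorem applied to real and imaginary parts gives $II_n\to 0$ a.s.

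For $I_n$, introduce the operator $\hat J_n$ on $\ell^2(\Z)$ obtained from $J$ by setting $b_0=b_n=0$, so that $\hat J_n$ is block-diagonal and its middle $[1,n]$-block equals $J_n$; in particular $(\hat J_n-z)^{-1}(i,i)=G_n(z;i,i)$ for $1\le i\le n$. The perturbation $V:=J-\hat J_n$ has rank $4$ and operator norm $\max(b_0,b_n)$, so the resolvent identity $(\hat J_n-z)^{-1}-(J-z)^{-1}=(\hat J_n-z)^{-1}V(J-z)^{-1}$, together with $\|(J-z)^{-1}\|,\,\|(\hat J_n-z)^{-1}\|\le(\Image z)^{-1}$, yields
$$|nI_n| = \Bigl|\sum_{i=1}^n\bigl((\hat J_n-z)^{-1}(i,i)-(J-z)^{-1}(i,i)\bigr)\Bigr| \le \frac{4\max(b_0,b_n)}{(\Image z)^2}.$$
Because $\{b_i\}$ is i.i.d.\ with finite first moment in every setting considered in this paper, Borel--Cantelli yields $b_n/n\to 0$ a.s., so $I_n\to 0$ a.s.

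Combining the two steps, $S_{L_n}(z)\to S_{\bar\mu}(z)$ a.s.\ for each fixed $z\in\C_+$. On the full-measure event where this convergence holds simultaneously for $z$ in a countable dense subset of $\C_+$, Montel's theorem together with the uniform bound $|S_{L_n}(z)|\le(\Image z)^{-1}$ upgrades it to locally uniform convergence on all of $\C_+$, which in turn gives a.s.\ weak convergence $L_n\Rightarrow\bar\mu$ by the standard argument based on Stieltjes inversion. The main obstacle is the rank-$4$ estimate for $I_n$: one must pick the decoupling so that $\hat J_n$ restricts to $J_n$ on $[1,n]$ yet differs from $J$ by only four entries, and then absorb the random boundary norm $\max(b_0,b_n)$ through the $1/n$ prefactor via the mild moment assumption on $b_i$.
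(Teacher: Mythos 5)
The paper does not prove Theorem~\ref{thm:ids}; it states it as well known (existence of the integrated density of states), citing \cite{Carmona-Lacroix}. So your argument is a standalone proof, and its spirit (ergodic theorem for a stationary functional of the shift, plus a boundary comparison between the finite operator and the full-line operator) closely mirrors how the paper later handles the polynomial law of large numbers inside the proof of Theorem~\ref{thm:CLT-iid}; working with resolvents rather than polynomials is what lets you reach all bounded continuous $f$ without any moment hypothesis on $a_i$.

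The one place to fix is the bound on $I_n$. You pass through $\|V\|$ and arrive at $|nI_n|\le 4\max(b_0,b_n)/(\Image z)^2$, then invoke $\E[b_1]<\infty$ and Borel--Cantelli to get $b_n/n\to 0$. That moment condition is not among the theorem's hypotheses, and your bound genuinely needs it: if $\E[b_1]=\infty$ then $\limsup_n b_n/n=\infty$ almost surely, so this estimate cannot close the argument. But you do not need to factor through $\|V\|$ at all. The operator $T:=(\hat J_n-z)^{-1}-(J-z)^{-1}=(\hat J_n-z)^{-1}V(J-z)^{-1}$ has rank at most $\rank V\le 4$ and operator norm at most $\|(\hat J_n-z)^{-1}\|+\|(J-z)^{-1}\|\le 2/\Image z$, so its sum of singular values is at most $8/\Image z$, and therefore $\big|\sum_{i\in S}T(i,i)\big|\le 8/\Image z$ for any index set $S$ (write $T=\sum_k s_k\, u_k\langle\cdot,v_k\rangle$ and use Cauchy--Schwarz). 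Applied with $S=[1,n]$ this gives the deterministic bound $|nI_n|\le 8/\Image z$, hence $I_n\to 0$ with no moment condition on $\{b_i\}$. With that change the rest is sound: Birkhoff for the stationary ergodic $\{G(z;i,i)\}_{i\in\Z}$, essential self-adjointness of $\hat J_n$ from $\sum 1/b_i^2=\infty$ a.s.\ on each half-line, and passage from a countable dense $z$-set to weak convergence --- and in that last step you can also replace Montel by the Lipschitz estimate $|S_\mu(z)-S_\mu(z')|\le |z-z'|/(\Image z\,\Image z')$ recalled in Appendix~\ref{app:rpm}.
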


\begin{remark}
The existence of ids can be rewritten in the following form
\[
	N_n(E) \to N(E) \text{ almost surely as } n \to \infty,
\]
at any continuous point $E$ of $N(E)$,
where $N(E)$ is the distribution function of $\bar \mu$, $N(E) = \bar \mu((-\infty, E])$.
These results may be regarded as the strong law of large numbers. Then the next natural question is about the central limit theorem (CLT) which is related to the second order of the above convergence. The following results were known.
\begin{itemize}
	\item[(i)] 
	Reznikova \cite{Reznikova-1980} considered discrete Schr\"odinger operators with i.i.d.~potentials whose common distribution has continuous probability density function with compact support. It was proved that the random process 
	\[
		N_n^*(E) := \sqrt{n}(N_n(E) - N(E))
	\]
converges to a Gaussian process in the sense of convergence of finite distributions. 
	\item[(ii)]
	Girko and Vasil$'$ev \cite{Girko-1983} considered general i.i.d.~Jacobi matrices and derived a CLT for a suitable scaling of $(\tilde N_n(E_1) - \tilde N_n(E_2))$, where
\[
	\tilde N_n(E) = \frac{1}{a} \int \frac{N_n(E + ay)}{1 + y^2} dy, (a>0),
\]
is a smooth version of $N_n(E)$.

	\item[(iii)]
Recently,	Krisch and Pastur \cite{Krish-Pastur-2015} considered discrete Schr\"odinger operators with bounded i.i.d.~potentials and established a CLT for $\Tr G_n(x)$, where $x \in \R$ does not lie in the spectrum of $J$.
	\item[(iv)] 	
	In random matrix theory, we want to extend such CLT  for as large as possible class of test functions. In the next subsection, for Gaussian beta ensembles, we are going to establish a CLT for continuous test function with continuous derivative of polynomial growth. As a preparation for the next subsection, we study a CLT for polynomial test functions in case where $\{a_i\}$ and $\{b_i\}$ have all finite moments.
\end{itemize}
\end{remark}

\begin{theorem}\label{thm:CLT-iid}
	Assume that $\{a_i\}$ and $\{b_i\}$ have all finite moments. Then for any non trivial polynomial $p$, 
	\begin{equation}\label{iid-polynomial-clt}
		\sqrt{n} (\lf L_n, p \rt - \E[\lf L_n, p \rt]) \dto \Normal(0, \sigma_p^2),
	\end{equation}
for some constant $\sigma_p^2 \ge 0$. 
\end{theorem}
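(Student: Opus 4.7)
The plan is to prove the CLT via the martingale difference decomposition, exploiting the fact that $\Tr p(J_n)$ is a sum of strictly local functionals of the i.i.d.~entries. Let $d=\deg p$ and let $\F_k=\sigma(a_i,b_i:i\le k)$ with $\F_0$ trivial. Because $J_n$ is tridiagonal, $p(J_n)(i,i)$ is a polynomial in the entries $(a_j,b_j)$ with $|j-i|\le d$, so
\[
  n\langle L_n,p\rangle \;=\; \Tr\,p(J_n) \;=\; \sum_{i=1}^n p(J_n)(i,i)
\]
is a sum of range-$d$ functionals of the i.i.d.~sequences. Writing
\[
  S_n \;:=\; \Tr p(J_n) - \E[\Tr p(J_n)] \;=\; \sum_{k=1}^n D_k,\qquad D_k \;:=\; \E[S_n\mid \F_k]-\E[S_n\mid \F_{k-1}],
\]
the sequence $(D_k)$ is a martingale difference sequence with respect to $(\F_k)$. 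Only the summands $p(J_n)(i,i)$ with $|i-k|\le d$ contribute to $D_k$, and a quick unwinding shows that $D_k$ is actually a measurable function of $(a_j,b_j)_{j\in[k-2d,k]}$; in particular it is bounded in $L^r$ for every $r\ge 1$ by the finite-moment hypothesis, uniformly in $k$ and $n$.

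Next I would apply the martingale difference CLT (Appendix~A). The Lindeberg condition is immediate from the moment bounds: since $\E[D_k^4]\le C$ uniformly, Markov's inequality yields $\Prob(|D_k|>\varepsilon\sqrt n)\le C/(\varepsilon^4 n^2)$, and Cauchy--Schwarz then gives $\E[D_k^2\one_{|D_k|>\varepsilon\sqrt n}]\le C/n$, so $\frac1n\sum_{k=1}^n \E[D_k^2\one_{|D_k|>\varepsilon\sqrt n}]\to 0$. Convergence of the conditional variance requires showing
\[
  \frac{1}{n}\sum_{k=1}^n \E[D_k^2\mid \F_{k-1}] \;\Pto\; \sigma_p^2
\]
for some deterministic $\sigma_p^2\ge 0$. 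The idea is that, for bulk indices $2d\le k\le n-2d$, the random variable $\E[D_k^2\mid\F_{k-1}]$ is a fixed measurable function of $(a_j,b_j)_{j\in[k-2d,k-1]}$ (the same function for every bulk $k$, by the translation invariance built into the local construction of $D_k$). By i.i.d.~stationarity this gives a stationary, $2d$-dependent sequence, so the strong law of large numbers applies and the Ces\`aro average converges a.s.~to its common expectation $\sigma_p^2$. The $O(d)$ boundary indices contribute $O(1/n)=o(1)$ uniformly.

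The step I expect to be the most delicate, though still routine, is the second one: making it precise that for bulk $k$ the conditional variances $\E[D_k^2\mid\F_{k-1}]$ are obtained by shifting one fixed functional along the i.i.d.~input, so that stationarity and the SLLN genuinely apply. The rest is bookkeeping: the finite-moment hypothesis on $(a_i,b_i)$ makes the $L^r$-bounds on $D_k$ cheap, the Lindeberg step reduces immediately to Markov, and the martingale CLT delivers the conclusion with $\sigma_p^2 = \lim_n \frac{1}{n}\Var(\Tr p(J_n))\ge 0$. Nondegeneracy of $\sigma_p^2$ is not claimed in the statement, so no further analysis of its structure is needed at this stage.
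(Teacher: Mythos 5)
Your proposal uses the same approach as the paper: decompose $\Tr p(J_n)-\E[\Tr p(J_n)]$ into martingale differences $Y_{n,k}=\E[\cdot\mid\F_k]-\E[\cdot\mid\F_{k-1}]$ with the natural filtration $\F_k=\sigma(a_i,b_i:i\le k)$, observe that for bulk $k$ each $Y_{n,k}$ (hence $\E[Y_{n,k}^2\mid\F_{k-1}]$) is a fixed local functional of the i.i.d.~input so the conditional variances form a stationary finitely-dependent sequence whose Ces\`aro averages converge by the ergodic theorem, and verify the Lindeberg-type condition via a uniform fourth-moment bound. This matches the paper's proof, which invokes Theorem~\ref{thm:MTG} with exactly these two checks.
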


\begin{proof}
Let $p$ be a polynomial of degree $m > 0$. Let us first prove the law of large numbers for $\langle L_n, p \rangle$. We begin with the following expression
\begin{align*}
	\langle L_n, p \rangle = \frac{1}{n} \Tr p(J_{n}) = \frac{1}{n} \sum_{j = 1}^n p(J_{n})(j,j).
\end{align*}
Observe that the sequence $\{p(J_n)(j,j)\}_{j = 1}^n$ is a part of a stationary process except some first and some last terms. More precisely, let $\theta_j = \theta_j (p) = p(J)(j,j)$. Recall that $J$ is the doubly infinite Jacobi matrix defined in \eqref{doubly-infinite-matrix}. Then $\{\theta_j\}_{j \in \Z}$ is a stationary process. Moreover, $p(J_n)(j,j) = \theta_j$, if $m/2 < j < n - m/2$. Now the expectation of $\theta_1$ is finite because all moments of $a_i$ and $b_i$ are finite. Thus by the ergodic theorem,
\[
	\frac{1}{n} \sum_{j = 1}^n \theta_j\to \E[\theta_1] \text{ almost surely as } n\to \infty.	
\]
Consequently, 
\[
	\langle L_n, p \rangle = \frac{1}{n} \sum_{j = 1}^n p(J_{n})(j,j) \to \E[\theta_1(p)] \text{ almost surely as } n\to \infty.
\]

Next, we consider the central limit theorem for $\bra{L_n, p}$. The idea here is to apply the martingale difference central limit theorem quoted in Appendix~\ref{app:CLT}. Let $\F_{n,k} = \sigma(a_i, b_i : 1 \le i \le k),$ for $1\le k \le n$,  $\F_{n, 0} = \{\emptyset, \Omega\}$, and let
\begin{align*}
	X_{n,k} &= \E[n\langle L_n, p \rangle | \F_{n,k}], (0 \le k \le n),\\
	Y_{n, k} &= X_{n, k} - X_{n, k - 1}, (1 \le k \le n),\\
	\sigma_{n, k}^2 &= \E[Y_{n, k}^2 | \F_{k - 1}], (1 \le k \le n).
\end{align*}
Then the CLT \eqref{iid-polynomial-clt} follows from Theorem~\ref{thm:MTG}, provided that the following two conditions are satisfied
\begin{align}
	&\frac{1}{n} \sum_{k = 1}^n \sigma_{n, k}^2 \to \sigma_p^2 \text{ in probability as } n \to \infty, 	\label{MTG1}\\
	&\frac{1}{n^2} \sum_{k = 1}^n \E[Y_{n,k}^4] \to 0 \text{ as } n \to \infty,  \label{MTG2}
\end{align}
where $\sigma_p^2 \ge 0$ is a constant.

Let us show the condition~\eqref{MTG1}. Note that $p(J_n)(j,j)$ depends on $\{a_{j \pm k}, b_{j \pm k}\}_{k = 0,\dots, \lfloor \frac m2 \rfloor}$. Therefore $Y_{n,k}$, and hence $\sigma_{n,k}^2$, depends only on $\{a_{k \pm j}, b_{k \pm j}\}_{j = 0, \dots, m}$. Consequently, $\sigma_{n,k}^2 =: \hat \sigma_k^2$ does not depend on $n$, if $m < k < n- m$. The sequence $\{\hat \sigma_k^2\}_{k > m}$ becomes a stationary process, and thus by the ergodic theorem, 
\[
	\frac{1}{n} \sum_{k = m + 1}^ {n - m - 1} \hat \sigma_k^2 \to \E[\hat \sigma_{m + 1}^2] \text{ almost surely as } n \to \infty,
\]
from which the condition \eqref{MTG1} follows.

For the condition~\eqref{MTG2}, note that $\E[|Y_{n,k}|^4] \le M$ for some constant $M$ which does not depend on $n$ and $k$. Therefore,
\[
	\frac{1}{n^2} \sum_{k = 1}^n \E[|Y_{n,k}|^4]  \le \frac Mn \to 0 \text{ as } n \to \infty.
\]
The theorem is proved. 
\end{proof}

Let us move on to the main topic of this subsection. Recall that the density $\bar \mu(E)$ (with respect to the Lebesgue measure) when exists is called the density of states. The local law in this case is a consequence of the exponential decay of Green's functions.  
\begin{lemma}\label{lem:local-law-iid}
	Assume that Conditions A, B and G holds. Assume further that $\bar \mu(E)$ exists at $E$ and is positive. Then 
\[
	\E[\xi_n(f_\zeta)] \to \pi \bar \mu(E) \as n \to \infty. 
\]
\end{lemma}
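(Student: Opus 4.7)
The plan is to rewrite $\E[\xi_n(f_\zeta)]$ as the imaginary part of a Stieltjes transform evaluated at $z_n := E + \zeta/n$ and then compare, on the one hand with the Stieltjes transform of the doubly infinite density of states $\bar{\mu}$ at the same point, and on the other hand with the non-tangential boundary value of $S_{\bar{\mu}}$ at $E$. Using $\xi_n(f_\zeta) = \tfrac{1}{n}\Image\Tr G_n(z_n)$, we have $\E[\xi_n(f_\zeta)] = \Image S_{\bar{L}_n}(z_n)$, while translation invariance of the i.i.d.~doubly infinite matrix $J$ gives $\E[G(z;j,j)] = S_{\bar{\mu}}(z)$ for every $j \in \Z$. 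So it suffices to prove the two limits
\[
	\Image S_{\bar{L}_n}(z_n) - \Image S_{\bar{\mu}}(z_n) \to 0
	\quad\text{and}\quad
	\Image S_{\bar{\mu}}(z_n) \to \pi\,\bar{\mu}(E).
\]
The second limit is the Stieltjes--Perron (Fatou) theorem: $z_n \to E$ non-tangentially because $|\Real z_n - E|/\Image z_n = |\sigma|/\tau$ is constant, and $\bar{\mu}$ has a density at $E$ equal to $\bar{\mu}(E)$.

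For the first limit I would expand the difference as $\frac{1}{n}\sum_{j = 1}^n \E[\Image G_n(z_n;j,j) - \Image G(z_n;j,j)]$ and apply the resolvent identity. Regarding $J_n$ as $J$ restricted to $[1,n]$ with the two couplings $b_0$ and $b_n$ to the complement removed, one obtains
\[
	G_n(z;j,j) - G(z;j,j) = G_n(z;j,1)\,b_0\,G(z;0,j) + G_n(z;j,n)\,b_n\,G(z;n+1,j), \quad j \in [1,n].
\]
Split the sum into a boundary part $j \in [1,A\log n] \cup [n - A\log n,n]$ and an interior part. For the boundary terms, Wegner's estimate (Lemma~\ref{lem:Wegner}) bounds $\E[\Image G_n(z_n;j,j)]$ and $\E[\Image G(z_n;j,j)]$ each by $M_A\pi$, so these $O(\log n)$ terms contribute $O((\log n)/n) \to 0$. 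For the interior terms I would combine the trivial bound $|G| \le 1/\Image z_n = n/\tau$ with fractional powers $|G|^s$, and then apply H\"older against Condition~B for $b_0$ and the fractional-moment exponential decay for $|G_n|^s$ (Theorem~\ref{thm:exponential-decay}) and for $|G|^s$ (its doubly-infinite analogue, obtained as the $n \to \infty$ limit of the uniform-in-$n$ Theorem~\ref{thm:Schenker-4}), to get, for suitable $s \in (0,1)$ and $\gamma > 0$,
\[
	\E\bigl[|G_n(z_n;j,1)|\,b_0\,|G(z_n;0,j)|\bigr] \le C\,(n/\tau)^{2 - s(1 - 1/T)}\,e^{-\gamma j(1 - 1/T)}.
\]
Choosing $A$ large enough that $\gamma A (1 - 1/T) > 1 - s(1 - 1/T)$ makes the interior contribution $o(1)$, and the $(j,n)$ term is handled symmetrically.

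The main obstacle is this last balancing step. The trivial bound $\|G\| \le 1/\Image z_n$ introduces a polynomial blow-up of order $n^{O(1)}$, which can only be defeated by the exponential decay once the summation window starts at a boundary distance of order $\log n$ with a sufficiently large constant; this in turn rests on having a fractional-moment decay rate $\gamma$ uniform in $n$ and depending only on $s$, $T$, $\Lambda$, and the regularity hypotheses of Section~\ref{sec:G}. Establishing the same bound for the doubly infinite $G$, which is not literally covered by Condition~G, is the other delicate point and is done by passing to the $n \to \infty$ limit in Theorem~\ref{thm:Schenker-4}, whose estimate is uniform in the matrix size.
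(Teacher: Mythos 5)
Your argument matches the paper's proof: the paper likewise writes $\E[\xi_n(f_\zeta)] - \E[\Image G(z;1,1)] = \frac{1}{n}\sum_x\bigl(\E[\Image G_n(z;x,x)] - \E[\Image G(z;x,x)]\bigr)$ using translation invariance of $J$, shows this difference vanishes via the resolvent identity together with the exponential decay of fractional moments (referring back to the proof of Lemma~\ref{lem:approximate-xi-n} for the boundary/interior split), and then invokes the Stieltjes--Perron (Fatou) theorem for $\Image S_{\bar\mu}(E+\zeta/n) \to \pi\bar\mu(E)$. Your write-up is more explicit than the paper's terse reference back to Lemma~\ref{lem:approximate-xi-n} --- in particular you correctly flag that Condition~G only controls Green's functions of finite restrictions and that the doubly-infinite bound must be obtained as an $n\to\infty$ limit of the $n$-uniform estimate in Theorem~\ref{thm:Schenker-4} --- but the route is the same.
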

\begin{proof}
Fix $\zeta = \sigma + i \tau$. Recall that 
\[
	\xi_n(f_\zeta) = \frac{1}{n} \sum_{x = 1}^n \Image G_n(z; x, x),
\]
where $z = E + \zeta/n$. Similar to the proof of Lemma~\ref{lem:approximate-xi-n}, we can deduce that
\begin{align*}
	&\E[\xi_n(f_\zeta)] - \E[\Image G(z; 1, 1)] \\
	&= \frac{1}{n} \sum_{x = 1}^n (\E[\Image G_n(z; x, x)] - \E[\Image G(z; x, x)]) \to 0 \text{ as } n \to \infty. 
\end{align*}

In addition, note that 
\[
	\E[\Image G(z; 1, 1)] = \Image S_{\bar \mu}(E + \frac{\zeta}{n}) \to \pi \bar\mu(E),
\]
provided that the density $\bar \mu(E)$ exists at $E$, which completes the proof of Lemma~\ref{lem:local-law-iid}. 
\end{proof}

\begin{corollary}
Assume that Conditions A and B hold and that the probability density function of the common distribution of $\{a_i\}$ is fluctuation regular. Then the point processes $\{\xi_n\}$ converge weakly to a homogeneous Poisson point process with intensity $\bar \mu(E)$, provided that $\bar \mu(E)$ exists at $E$ and is positive.
\end{corollary}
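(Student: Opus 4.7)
The plan is to verify each of the four hypotheses A, B, G, L of Theorem~\ref{thm:Poisson-main} with target intensity $\theta = \bar\mu(E)$ and then read the conclusion off directly. Conditions A and B are precisely what is assumed in the corollary, so only G and L require any work.

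For Condition G, I would invoke Theorem~\ref{thm:exponential-decay} together with the remark immediately following it: under Conditions A, B, and fluctuation regularity of the common density of $\{a_i\}$, the bound
\[
	\E[|G_{[u,v]}(z; y, x)|^{s/2}] \le M_s e^{-\frac{\gamma}{2}|y - x|}
\]
holds for all $1 \le u \le v \le n$, $y \in \{u, v\}$, $x \in [u,v]$ and all $z = \lambda + i\tau$ with $\lambda \in [-\Lambda, \Lambda]$. Choosing $\Lambda$ so that $[-\Lambda, \Lambda]$ contains a real neighborhood of $E$, and reading $s/2 \in (0, 1/2)$ as the exponent in Condition~G, gives exactly the desired exponential-decay estimate uniformly in the restriction interval and in a complex neighborhood of $E$.

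For Condition L, my plan is to establish the stronger Condition L$'$ first and then promote it. Lemma~\ref{lem:local-law-iid} applies because A, B, G have just been verified and $\bar\mu(E)$ is assumed to exist and be positive; its conclusion is Condition L$'$ with $\theta = \bar\mu(E)$. To pass from L$'$ to L I would quote Wegner's estimate (Lemma~\ref{lem:Wegner}), which under Condition A supplies the uniform intensity bound $\E[\xi_n(dx)] \le M_A\, dx$, and then apply the short $L^1$-approximation argument written out right after the statement of L$'$ in Section~2. This yields Condition L with the same constant $\theta = \bar\mu(E)$.

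With all four hypotheses of Theorem~\ref{thm:Poisson-main} verified, the conclusion is immediate: $\xi_n$ converges weakly to a homogeneous Poisson point process of intensity $\bar\mu(E)$. There is no real obstacle; the corollary is essentially a bookkeeping combination of Theorems~\ref{thm:Poisson-main} and~\ref{thm:exponential-decay} with Lemmas~\ref{lem:Wegner} and~\ref{lem:local-law-iid}. The only point requiring attention is matching the constants: the $\theta$ produced by Theorem~\ref{thm:Poisson-main} must coincide with $\bar\mu(E)$, and this is guaranteed because Lemma~\ref{lem:local-law-iid} explicitly identifies the intensity in L$'$, and the Wegner-based argument preserves that value when passing to L.
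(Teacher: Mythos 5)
Your proposal is correct and matches the route the paper intends: the corollary is stated without explicit proof precisely because it is the bookkeeping combination of Theorem~\ref{thm:Poisson-main} with Condition~G supplied by Theorem~\ref{thm:exponential-decay} and its remark, Condition~L$'$ with $\theta=\bar\mu(E)$ supplied by Lemma~\ref{lem:local-law-iid}, and the L$'\!\Rightarrow\!$L promotion via Wegner's bound (Lemma~\ref{lem:Wegner}). The only nitpick is that Theorem~\ref{thm:exponential-decay} takes $0<s<1/2$, so the exponent $s/2$ appearing in Condition~G actually lies in $(0,1/4)$ rather than $(0,1/2)$; this is harmless since any exponent in $(0,1)$ suffices.
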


\subsection{Gaussian beta ensembles at high temperature}
\subsubsection{Global law}

We consider the asymptotic behavior of the (scaled) G$\beta$E as $n \to \infty$ with $n \beta =2 \alpha$. Recall that the scaled G$\beta$E here is associated with the following random Jacobi matrix
\[
	T_{n,\beta} = \begin{pmatrix}
		\Normal(0,1)		&\chitilde_{(n-1)\beta}	\\
		\chitilde_{(n-1)\beta}	&\Normal(0,1)		&\chitilde_{(n-2)\beta}		\\
					
												&\ddots		&\ddots		&\ddots \\
						
						&&					\chitilde_{\beta}	&\Normal(0,1)
	\end{pmatrix}.
\]
We still denote the entries of $T_{n,\beta}$ by $\{a_1, \dots, a_n\}$ and $\{b_1, \dots, b_{n - 1}\}$ though they vary as functions of $n$ and $\beta$. We use the same notations as in the previous subsection. Let $L_{n, \beta} $ be the empirical distribution of $T_{n,\beta}$,
\[
	L_{n, \beta} = \frac{1}{n} \sum_{j = 1}^n \delta_{\lambda_j},
\]
where $\lambda_1, \dots, \lambda_n$ are the eigenvalues of $T_{n,\beta}$, which are distributed as 
\[
	(\lambda_1, \dots, \lambda_n) \propto |\Delta(\lambda)|^\beta e^{- \frac{1}{2}(\lambda_1^2 + \cdots +\lambda_n^2 )} d\lambda.
\]

Another interesting object when studying global limiting behaviors of Gaussian beta ensembles is the spectral measure. The spectral measure of $T_{n,\beta}$ is defined as a probability measure $\nu_{n, \beta}$ satisfying 
\[
	\bra{\nu_{n, \beta}, x^k} = (T_{n,\beta}^k e_1, e_1) = T_{n,\beta}^k(1,1), k=0,1,\dots.
\]
Let $u_1, \dots, u_n$ be normalized eigenvectors corresponding to the eigenvalues $\lambda_1, \dots, \lambda_n$. Then $u_1, \dots, u_n$ form an orthonormal system in $\R^n$ because the eigenvalues are distinct. The spectral measure $\nu_n$ can be expressed as
\[
	\nu_{n, \beta} = \sum_{j = 1}^n q_j^2 \delta_{\lambda_j},\quad q_j = |u_j(1)|.
\]
The weights $(q_1^2, \dots, q_n^2)$ are known to have symmetric Dirichlet distribution with parameter $\beta/2$,
and are independent of the eigenvalues $(\lambda_1, \dots, \lambda_n)$.

Since $q_j$ plays equal role for $j = 1, \dots, n$, it follows that $\E[q_j^2] = 1/n$. Consequently, the mean of the empirical measure coincides with the mean of the spectral measure, namely, $\bar L_{n, \beta} = \bar \nu_{n, \beta}$. In the regime that $n \beta = 2 \alpha$, it was shown in \cite{Allez12,DS15} that the mean measures $\bar L_{n, \beta} = \bar \nu_{n, \beta}$ converge weakly to a non random probability measure. The limiting measure, denoted by  $\bar \mu_\alpha$, is the spectral measure of the following infinite Jacobi matrix
\[
	\begin{pmatrix}
		0	&\sqrt{\alpha + 1}\\
		\sqrt{\alpha + 1}	&0	&\sqrt{\alpha + 2}\\
		&\ddots	&\ddots	&\ddots
	\end{pmatrix}.
\]
Thus, we call it the probability measure of associated Hermite polynomials \cite{Askey-Wimp-1984}.
The density of $\bar \mu_\alpha$ is given by
\[
		\bar \mu_\alpha (E) = \frac{e^{-E^2/2}}{\sqrt{2\pi}}\frac{1}{|\hat f_\alpha(E)|^2} , \text{where }
	\hat f_\alpha(E) =  \sqrt{\frac{\alpha}{\Gamma(\alpha)}} \int_0^\infty t^{\alpha - 1} e^{-\frac {t^2}{2} + iEt} dt.
\]

Let us shortly explain the above fact. Let $J_\alpha$ be an infinite i.i.d.~Jacobi matrix whose entries are distributed as
\[
	J_\alpha = \begin{pmatrix}
		\Normal(0,1)		&\chitilde_{2 \alpha}	\\
		\chitilde_{2 \alpha}	&\Normal(0,1)		&\chitilde_{2 \alpha}		\\
					
												&\ddots		&\ddots		&\ddots 
	\end{pmatrix}.
\]
Then each entry of $T_{n,\beta}$ converges in distribution to the corresponding entry of $J_\alpha$ as $n \to \infty$. We can choose a realization such that the convergence holds almost surely. Consequently, on that realization, all moments of the spectral measure $\nu_{n,\beta}$ converge to those of $\mu_\alpha$, the spectral measure of $J_\alpha$, almost surely. Note that the spectral measure $\mu_\alpha$ is unique, or is determined by moments, almost surely. Therefore, on such realization, $\nu_{n, \beta}$ converges weakly to $\mu_\alpha$ almost surely by Corollary~\ref{cor:moments-convergence}. In general, we can only state that  the spectral measure $\nu_{n, \beta}$ converges weakly to $\mu_\alpha$ in distribution. It follows that the mean $\bar \nu_{n, \beta}$ converges weakly to $\bar \mu_\alpha$, the mean of $\mu_\alpha$.

\begin{theorem}[Global law {\cite[Proposition 2.1]{Peche15}}]
As $n \to \infty$ with $n\beta = 2 \alpha$,
the empirical distributions $\{L_{n, \beta} \}$ converge weakly to $\bar \mu_\alpha$ in probability.
\end{theorem}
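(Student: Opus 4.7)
\medskip\noindent\textbf{Proof plan.}
The plan is to obtain the global law as a direct corollary of Theorem~\ref{CLT} combined with the weak convergence of the means $\bar L_{n,\beta} \to \bar \mu_\alpha$ already established in the paragraph immediately preceding the statement.

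First, fix $f \in C_c^\infty(\R)$; in particular $f$ has continuous derivative of polynomial growth, so Theorem~\ref{CLT} applies and gives
\[
\sqrt{n}\bigl(\bra{L_{n,\beta}, f} - \E[\bra{L_{n,\beta}, f}]\bigr) \dto \Normal(0, \sigma_f^2),
\]
which yields $\bra{L_{n,\beta}, f} - \E[\bra{L_{n,\beta}, f}] \to 0$ in probability at rate $1/\sqrt{n}$. On the other hand, the identity $\E[\bra{L_{n,\beta}, f}] = \bra{\bar L_{n,\beta}, f} = \bra{\bar\nu_{n,\beta}, f}$, together with $\bar\nu_{n,\beta} \to \bar \mu_\alpha$ weakly, gives $\E[\bra{L_{n,\beta}, f}] \to \bra{\bar \mu_\alpha, f}$. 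Adding the two statements,
\[
\bra{L_{n,\beta}, f} \Pto \bra{\bar \mu_\alpha, f} \quad \text{for every } f \in C_c^\infty(\R).
\]
Since $C_c^\infty(\R)$ is convergence-determining for probability measures on $\R$, the criterion for convergence in probability of random probability measures recalled in Appendix~B upgrades this to the weak convergence $L_{n,\beta} \to \bar \mu_\alpha$ in probability.

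A self-contained moment-method alternative, closer in spirit to the Jacobi-matrix viewpoint of the paper, is also available. Write $\bra{L_{n,\beta}, x^k} = \frac{1}{n}\sum_{j=1}^n (T_{n,\beta})^k(j,j)$ and observe that the summand at position $j$ is a polynomial in only those entries of $T_{n,\beta}$ within distance $k$ of $j$; in particular summands separated by more than $2k$ are independent. Combined with uniform moment bounds on the entries (which hold because $n\beta = 2\alpha$ keeps the $\chitilde$ parameters bounded), this gives $\Var(\bra{L_{n,\beta}, x^k}) = O(1/n)$. Together with moment convergence $\E[T_{n,\beta}^k(1,1)] \to \E[J_\alpha^k(1,1)] = \bra{\bar\mu_\alpha, x^k}$ and the fact that $\bar\mu_\alpha$ is determined by its moments, this again yields $L_{n,\beta} \to \bar \mu_\alpha$ weakly in probability.

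There is no serious obstacle here: the substance of the theorem is entirely contained in the previously established CLT (or, in the alternative, in straightforward variance/moment bookkeeping using independence of widely separated entries), and the only technical step is the standard passage from convergence of individual linear statistics to convergence of random measures, handled by the Appendix~B criterion.
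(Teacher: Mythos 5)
Your proposal is correct, and it is worth noting that your two routes relate to the paper's proof in opposite ways. The paper's own argument is precisely your \emph{alternative} moment-method route: it cites \cite{Peche15} for $\Var[\lf L_{n,\beta},p\rt]=O(1/n)$ (which your finite-range-dependence bookkeeping reproves in-house), combines this with convergence of the moments $\E[\lf L_{n,\beta},x^k\rt]=\lf\bar\nu_{n,\beta},x^k\rt\to\lf\bar\mu_\alpha,x^k\rt$ established just before the statement, and then invokes Corollary~\ref{cor:moments-convergence} since $\bar\mu_\alpha$ is determined by moments. Your \emph{primary} route, deducing the law of large numbers from Theorem~\ref{CLT}, is genuinely different and inverts the paper's order of presentation (in the paper the global law is stated and proved before Theorem~\ref{thm:poly} and Theorem~\ref{thm:CLT-general-f}). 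This is not circular --- the proofs of those theorems rest on the martingale CLT, the Poincar\'e-type bound~\eqref{GE}, and the convergence of the mean measures $\bar L_{n,\beta}=\bar\nu_{n,\beta}\to\bar\mu_\alpha$, none of which uses the global law in probability --- but it is considerably heavier machinery than the simple $O(1/n)$ variance bound, so what the CLT route buys you is mainly conceptual economy (one theorem subsumes the other), not a shorter argument. One small point of hygiene: Appendix~B's criterion is stated for a \emph{countable} convergence-determining class $\cA\subset C_b(\R)$, whereas $C_c^\infty(\R)$ is uncountable; you should pass to a countable dense subfamily (or, more in the spirit of the paper, use the class of monomials together with Corollary~\ref{cor:moments-convergence}, which sidesteps the issue since $\bar\mu_\alpha$ is determined by moments).
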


It was shown in \cite{Peche15} that $\Var[\lf L_{n, \beta}, p \rt] = O(1/n)$ as $n \to \infty$, which implies that  
\[	
	\lf L_{n, \beta}, p \rt \to \lf \bar \mu_\alpha, p\rt \text{ in probability as } n \to \infty,
\]
for any polynomial $p$. Thus the empirical distributions $\{L_{n, \beta}\}$ converge weakly to $\bar \mu_\alpha$ in probability because the probability measure $\bar \mu_\alpha$ is determined by moments (Corollary~\ref{cor:moments-convergence}). Moreover, the convergence of moments also implies that 
\[
	\lf L_{n, \beta}, f \rt \to \lf \bar \mu_\alpha, f\rt \text{ in probability as } n \to \infty,
\]
for any continuous function $f$ of polynomial growth.

Next, we investigate the fluctuation around the limit, or a type of central limit theorem for $\lf L_{n, \beta}, f \rt$.  For fixed $\beta$, several approaches could solve such problem \cite{DE06,Johansson98}. However, those methods do not seem to directly work in this case. We propose here an approach which uses the martingale difference central limit theorem to derive the CLT for polynomial test functions as in the previous subsection. Then extending the CLT to continuous test functions having continuous derivative of polynomial growth can be done by the method which has recently been developed in \cite{Duy2016}.

Note that for the i.i.d.~Jacobi matrix $J_\alpha$, the following central limit theorem has been established in the previous subsection,  
\[
	\sqrt{n} (\langle L_n(\alpha), p \rangle - \E[\langle L_n(\alpha), p \rangle] ) \dto \Normal(0, \sigma^2_p(\alpha)) \as n \to \infty. 
\]
Here $L_n(\alpha)$ is the empirical distribution of $J_{\alpha}^{[1,n]}$, the restriction of $J_{\alpha}$ on $[1,n]$. 
\begin{theorem}\label{thm:poly}
	For any non constant polynomial $p$,  as $n \to \infty$ with $n \beta = 2\alpha$,
	\[
		\sqrt{n}(\langle L_{n, \beta}, p \rangle - \E[\langle L_{n, \beta}, p \rangle]) \dto \Normal(0, \sigma^2_p),
	\]
	where $\sigma^2_p = \int_0^1 \sigma_p^2(u \alpha) du$.
\end{theorem}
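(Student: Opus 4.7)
The plan is to adapt the martingale-difference argument used in the proof of Theorem~\ref{thm:CLT-iid}, the only new ingredient being that the subdiagonal entries $b_i\sim\chitilde_{(n-i)\beta}$ of $T_{n,\beta}$ have $i$-dependent distributions. Writing $n\lf L_{n,\beta},p\rt=\Tr p(T_{n,\beta})=\sum_{j=1}^n p(T_{n,\beta})(j,j)$ with $m=\deg p$, I would set $\F_{n,k}=\sigma(a_i,b_i:1\le i\le k)$, $X_{n,k}=\E[n\lf L_{n,\beta},p\rt\mid\F_{n,k}]$, $Y_{n,k}=X_{n,k}-X_{n,k-1}$, $\sigma_{n,k}^2=\E[Y_{n,k}^2\mid\F_{n,k-1}]$, and invoke Theorem~\ref{thm:MTG}. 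Since $p(T_{n,\beta})(j,j)$ depends only on entries with $|i-j|\le m/2$, each $Y_{n,k}$ is a polynomial of bounded degree in $\{a_i,b_i:|i-k|\le m\}$ whose coefficients are deterministic moments of the entries in $\{a_i,b_i:k<i\le k+m\}$.

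The Lindeberg-type condition $\frac{1}{n^2}\sum_k\E[Y_{n,k}^4]\to 0$ follows immediately from uniform moment bounds: because $(n-i)\beta\le 2\alpha$, one has $\E[b_i^{2r}]=\prod_{\ell=0}^{r-1}((n-i)\beta/2+\ell)\le\prod_{\ell=0}^{r-1}(\alpha+\ell)$, and $a_i\sim\Normal(0,1)$ has all moments, so $\E[Y_{n,k}^4]\le C$ with $C$ independent of $n$ and $k$, making the sum $O(1/n)$.

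The crux of the proof is to show $\frac{1}{n}\sum_k\sigma_{n,k}^2\to\sigma_p^2=\int_0^1\sigma_p^2(u\alpha)\,du$ in probability. Since $\sigma_{n,k}^2$ depends only on entries at sites $|i-k|\le m$, the sequence $\{\sigma_{n,k}^2\}$ is $(2m+1)$-dependent with uniformly bounded second moments, so a standard covariance count yields $\Var\bigl(\frac{1}{n}\sum_k\sigma_{n,k}^2\bigr)=O(1/n)$, reducing matters to the deterministic sum $\frac{1}{n}\sum_k\E[\sigma_{n,k}^2]$. For $k/n\to u\in(0,1)$, the parameter $(n-i)\beta$ converges uniformly to $2(1-u)\alpha$ over $|i-k|\le m$, so every moment of every entry in the local window converges to the corresponding moment of the entries in the i.i.d.~matrix $J_{(1-u)\alpha}$ of Theorem~\ref{thm:CLT-iid}. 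Because $\E[\sigma_{n,k}^2]$ is a fixed polynomial in precisely these moments, it converges to the i.i.d.~value $\sigma_p^2((1-u)\alpha)$. A Riemann-sum argument, followed by the change of variables $v=1-u$, then gives $\frac{1}{n}\sum_k\E[\sigma_{n,k}^2]\to\int_0^1\sigma_p^2((1-u)\alpha)\,du=\int_0^1\sigma_p^2(v\alpha)\,dv=\sigma_p^2$.

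The main obstacle I expect is this identification step: (i) verifying that the polynomial expressing $\E[\sigma_{n,k}^2]$ in terms of local moments is literally the same in the G$\beta$E and i.i.d.~settings (both arise from the same tridiagonal combinatorics of $p(J)(j,j)$), (ii) establishing continuity of $\alpha'\mapsto\sigma_p^2(\alpha')$ on $[0,\alpha]$ so that the pointwise convergence along $k/n\to u$ upgrades to the claimed Riemann-sum limit, and (iii) handling the boundary indices $k\le m$ and $k\ge n-m$, which contribute only $O(m)$ uniformly bounded summands and hence $O(1/n)$ to the average.
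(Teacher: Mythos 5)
Your proposal reproduces the paper's argument essentially step for step: the same martingale-difference setup, the same uniform fourth-moment bound for the Lindeberg condition, the same $(2m+1)$-dependence argument to kill the variance of $\frac{1}{n}\sum_k\sigma_{n,k}^2$, and the same comparison of $\E[\sigma_{n,k}^2]$ with the i.i.d.\ quantity $\sigma_p^2((1-k/n)\alpha)$ followed by a Riemann sum and the change of variable $v=1-u$. The three issues you flag at the end (the G$\beta$E and i.i.d.\ expressions being the same polynomial in local moments, continuity of $\alpha'\mapsto\sigma_p^2(\alpha')$ so the Riemann sum converges, and negligibility of the $O(m)$ boundary indices) are exactly the points the paper glosses over with the assertion $|\E[\sigma_{n,k}^2]-\E[\sigma_{n,k}^2((1-k/n)\alpha)]|<c_{n,m}\to 0$ for $m<k<n-m$; your presentation is slightly more explicit there but follows the same route.
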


\begin{proof}
The proof is similar to that of Theorem~\ref{thm:CLT-iid}. Let
\begin{align*}
	\F_{n,k} &= \sigma(a_i, b_i : 1 \le i \le k), (1 \le k \le n), \F_{n, 0} = \{\emptyset, \Omega\},\\
	X_{n,k} &= \E[n \bra{L_{n, \beta}, p} | \F_{n,k}], (0 \le k \le n),\\
	Y_{n, k} &= X_{n, k} - X_{n, k - 1},(1 \le k \le n),\\
	\sigma_{n, k}^2 &= \E[Y_{n, k}^2 | \F_{k - 1}], (1 \le k \le n).
\end{align*}
We need to check the following two conditions
\[
	\frac{1}{n} \sum_{k = 1}^n \sigma^2_{n,k}  \to \sigma_p^2 \text{ in probability as $n \to \infty$}, 
\]
and 
\[
	\frac{1}{n^2} \sum_{k = 1}^n \E[|Y_{n,k}|^4] \to 0 \text{ as }n \to \infty.
\]
The latter is trivial because $\E[|Y_{n,k}|^4]$ is uniformly bounded. 

Let us show the former. Recall that $\sigma_{n,k}^2$ depends on $\{a_{k \pm j}, b_{k \pm j} \}_{j = 0, \dots, m}$, where $m$ is the degree of $p$. Let $\sigma_{n,k}^2(\alpha)$ be the corresponding quantity of the i.i.d.~model $J_\alpha$. Since $b_{k \pm j} \sim \chitilde_{(n-k \mp j) \beta} = \chitilde_{(1 - \frac kn \mp \frac jn) 2\alpha}$, it is clear that 
\[
	\left|\E[\sigma_{n,k}^2] - \E[\sigma_{n, k}^2((1 - \frac kn)\alpha)]  \right| < c_{n,m} \to 0 \text{ as } n \to \infty, \text{for $m<k < n - m$}.
\]
Recall also that $\sigma_p^2((1 - \frac kn)\alpha) = \E[\sigma_{n, k}^2((1 - \frac kn)\alpha)]$, for $m<k < n - m$.
Consequently,
\[
	\frac{1}{n} \sum_{k = 1}^n \E[\sigma^2_{n,k}] \to \int_0^1 \sigma^2_p(u\alpha) du =: \sigma_p^2 \text{ as } n \to \infty.
\]
In addition, $\sigma_{n,i}^2$  and $\sigma_{n,j}^2$ are independent, if $|i - j| > 2m$. Thus 
\[
	\Var \left[\frac{1}{n} \sum_{k = 1}^n \sigma^2_{n,k} \right]= \frac{1}{n^2} \sum_{|i - j| \le 2m} \Cov(\sigma_{n,i}^2, \sigma_{n,j}^2)  = O(\frac 1n) \to 0 \text{ as } n \to \infty.
\]
Therefore 
\[
	\frac{1}{n} \sum_{k = 1}^n \sigma^2_{n,k}  \to \sigma_p^2 \text{ in probability as $n \to \infty$.} 
\]
Note that we have proved that 
\begin{equation}\label{convergence-of-variance}
n\Var[\bra{L_{n, \beta}, p}] = \frac{1}{n}\sum_{k = 1}^n \E[\sigma_{n,k}^2] \to \sigma_p^2 \text{ as } n \to \infty.
\end{equation}
We will need this property for the proof of Theorem~\ref{thm:CLT-general-f}.
The proof is complete. 
\end{proof}

To extend the CLT from polynomials to general test functions, we adopt the following approach which slightly improves an existence method quoted in \cite[Proposition~4.1]{Dumitriu-Paquette-2012}. First, base on the joint density of Gaussian beta ensembles, the following estimate is derived  (cf.~\cite[Eq.~(14)]{Duy2016}),
\begin{equation}\label{GE}
	n\Var[\langle L_{n, \beta}, f\rangle] \le \langle \bar L_{n, \beta}, (f')^2\rangle,
\end{equation}
for all continuous functions $f$ with continuous derivative. Next, in the regime that $n\beta = 2 \alpha$, as $n \to \infty$,
\[
	\lf \bar L_{n, \beta}, (f')^2\rt \to \lf \bar \mu_\alpha, (f')^2\rt < \infty,
\]
provided that $f \in C^1_p(\R)$, the set of differentiable function whose derivative $f'$ is a continuous function and $|f'(x)| \le P(x)$ for some polynomial $P$. Last, according to those estimates, the CLT for such function $f$ is derived by taking into account the following result.
\begin{lemma}[{\cite[Theorem 25.5]{Billingsley}}]\label{lem:triangle}
Let $\{Y_n\}_n$ and $\{X_{n,k}\}_{k,n}$ be real-valued random variables. Assume that
\begin{itemize}
	\item[\rm(i)] 
		$
			X_{n,k} \dto X_k \text{ as }n \to \infty;
		$
	\item[\rm(ii)]
		$
			X_k \dto X \text{ as } k \to \infty;
		$
	\item[\rm(iii)] for any $\varepsilon > 0$,
		$
			\lim_{k \to \infty} \limsup_{n \to \infty} \Prob(|X_{n,k} - Y_n| \ge \varepsilon) =0.
		$
\end{itemize}
Then $Y_n \dto X$ as $n \to \infty$.
\end{lemma}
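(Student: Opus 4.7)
My plan is to establish $Y_n \dto X$ by showing pointwise convergence of characteristic functions and then invoking L\'evy's continuity theorem. I will set $\phi_n(t) := \E[e^{itY_n}]$, $\phi_{n,k}(t) := \E[e^{itX_{n,k}}]$, $\phi_k(t) := \E[e^{itX_k}]$, and $\phi(t) := \E[e^{itX}]$. Hypotheses (i) and (ii), together with L\'evy's theorem, will then give respectively $\phi_{n,k}(t) \to \phi_k(t)$ as $n \to \infty$ for each fixed $k$, and $\phi_k(t) \to \phi(t)$ as $k \to \infty$, both pointwise in $t$. The whole task is to bridge these two facts using hypothesis (iii) to pass from the $X_{n,k}$'s to the $Y_n$'s.

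The key elementary step will be the bound $|e^{ia} - e^{ib}| \le \min(|a-b|, 2)$: splitting the expectation according to $\{|Y_n - X_{n,k}| < \varepsilon\}$ and its complement yields
\[
|\phi_n(t) - \phi_{n,k}(t)| \;\le\; |t|\varepsilon + 2\, \Prob(|Y_n - X_{n,k}| \ge \varepsilon).
\]
Combining with the triangle inequality produces, for every $\varepsilon > 0$ and every $k$,
\[
|\phi_n(t) - \phi(t)| \;\le\; |t|\varepsilon + 2\, \Prob(|Y_n - X_{n,k}| \ge \varepsilon) + |\phi_{n,k}(t) - \phi_k(t)| + |\phi_k(t) - \phi(t)|.
\]
I would then fix $t$ and $\varepsilon > 0$ and first take $\limsup_{n \to \infty}$, which kills the third term by (i); afterwards, letting $k \to \infty$ kills the second term by (iii) and the fourth by (ii), leaving $\limsup_{n \to \infty}|\phi_n(t) - \phi(t)| \le |t|\varepsilon$. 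Since $\varepsilon$ is arbitrary, this delivers $\phi_n(t) \to \phi(t)$ pointwise, and L\'evy's theorem then concludes.

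The argument is essentially bookkeeping, and the only genuine subtlety -- the one place where one can easily slip up -- will be the order in which the two limits are taken. Hypothesis (iii) controls only the iterated limit $\lim_{k}\limsup_{n}$, so the plan must be to send $n \to \infty$ first (for each fixed $k$, discarding the $|\phi_{n,k}(t) - \phi_k(t)|$ term via (i)) and only afterwards to let $k \to \infty$ (invoking (ii) and (iii) simultaneously). Beyond L\'evy's continuity theorem and the elementary inequality $|e^{ia} - e^{ib}| \le \min(|a-b|, 2)$, no additional machinery is needed.
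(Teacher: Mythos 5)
The paper does not supply a proof of this lemma; it is quoted verbatim from Billingsley [Theorem 25.5] as a black-box tool. Your characteristic-function proof is correct and complete: the inequality $|e^{ia}-e^{ib}|\le\min(|a-b|,2)$ gives $|\phi_n(t)-\phi_{n,k}(t)|\le|t|\varepsilon+2\,\Prob(|Y_n-X_{n,k}|\ge\varepsilon)$, the triangle inequality and the careful order of limits ($\limsup_n$ first, then $k\to\infty$) yield $\limsup_n|\phi_n(t)-\phi(t)|\le|t|\varepsilon$ for all $\varepsilon>0$, and L\'evy's continuity theorem closes the argument since $\phi$ is a bona fide characteristic function. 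Billingsley's own proof works at the level of distribution functions (Portmanteau-type estimates), an approach that also extends to random elements of general metric spaces, whereas your argument is tied to $\R$ (or $\R^d$); for the present paper, which only needs the real-valued case, either route is perfectly adequate and yours is arguably more transparent.
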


We restate Theorem~\ref{CLT} here.
\begin{theorem}\label{thm:CLT-general-f}
Let $f \in C^1_p(\R)$. Then as $n \to \infty$ with $n \beta = 2 \alpha$,
\[
	\sqrt{n}(\lf L_{n, \beta}, f\rt - \E[\lf L_{n, \beta}, f\rt]) \dto \Normal(0, \sigma_f^2),
\]
for some constant 
$\sigma_f^2 \ge 0$.
\end{theorem}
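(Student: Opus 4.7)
The plan is to apply Lemma~\ref{lem:triangle} with
\[
Y_n := \sqrt{n}\bigl(\langle L_{n,\beta}, f\rangle - \E[\langle L_{n,\beta}, f\rangle]\bigr), \qquad X_{n,k} := \sqrt{n}\bigl(\langle L_{n,\beta}, p_k\rangle - \E[\langle L_{n,\beta}, p_k\rangle]\bigr),
\]
where $\{p_k\}$ is a sequence of polynomials chosen so that $p_k' \to f'$ in $L^2(\bar\mu_\alpha)$. Such a sequence exists because $f' \in L^2(\bar\mu_\alpha)$ (since $|f'|$ is dominated by a polynomial and $\bar\mu_\alpha$ has finite moments of every order), and polynomials are dense in $L^2(\bar\mu_\alpha)$ because $\bar\mu_\alpha$ is moment-determined. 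Concretely, I would pick polynomials $q_k \to f'$ in $L^2(\bar\mu_\alpha)$ and set $p_k$ to be any polynomial antiderivative of $q_k$.

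Condition (i) of Lemma~\ref{lem:triangle} is immediate from Theorem~\ref{thm:poly}: for each fixed $k$, $X_{n,k} \dto \Normal(0, \sigma_{p_k}^2)$, with $\sigma_{p_k}^2 = \lim_n n\Var[\langle L_{n,\beta}, p_k\rangle]$ identified through \eqref{convergence-of-variance}. For condition (ii), I would show that $\{\sigma_{p_k}\}$ is Cauchy. Applying \eqref{convergence-of-variance} to $p_k - p_j$, combined with \eqref{GE} and the convergence $\langle \bar L_{n,\beta}, g\rangle \to \langle \bar\mu_\alpha, g\rangle$ for continuous $g$ of polynomial growth (which follows from the moment convergence underlying the global law), yields
\[
\sigma_{p_k - p_j}^2 = \lim_n n\Var[\langle L_{n,\beta}, p_k - p_j\rangle] \le \langle \bar\mu_\alpha, (p_k' - p_j')^2\rangle \to 0 \text{ as }k,j \to \infty.
\]
The triangle inequality for the $L^2$-norm of centered random variables gives $|\sigma_{p_k} - \sigma_{p_j}| \le \sigma_{p_k - p_j}$, so we may set $\sigma_f^2 := \lim_k \sigma_{p_k}^2$ and conclude $\Normal(0, \sigma_{p_k}^2) \dto \Normal(0, \sigma_f^2)$. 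For condition (iii), Chebyshev's inequality together with \eqref{GE} yields
\[
\Prob(|Y_n - X_{n,k}| \ge \varepsilon) \le \frac{n\Var[\langle L_{n,\beta}, f - p_k\rangle]}{\varepsilon^2} \le \frac{\langle \bar L_{n,\beta}, (f' - p_k')^2\rangle}{\varepsilon^2},
\]
and taking $\limsup_n$ then $k \to \infty$ drives this to $0$.

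The hard part is matching two different notions of approximation: the variance bound \eqref{GE} controls $\Var[\langle L_{n,\beta}, f - p_k\rangle]$ through $(f - p_k)'$, not through $f - p_k$ itself, so uniform or sup-norm approximation on $\R$ would be useless (and is anyway unavailable for general $f \in C^1_p(\R)$). The point is that \eqref{GE} is exactly a Sobolev-type bound whose natural partner is polynomial approximation in $L^2(\bar\mu_\alpha)$ at the level of the \emph{derivative}; moment-determinacy of $\bar\mu_\alpha$ is what makes this approximation available. Once these two ingredients are aligned, everything else is routine bookkeeping on top of Theorem~\ref{thm:poly} and Lemma~\ref{lem:triangle}.
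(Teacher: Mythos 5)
Your proposal is correct and follows essentially the same route as the paper: polynomial approximation of $f'$ in $L^2(\bar\mu_\alpha)$ (available by moment-determinacy), the Poincar\'e-type bound \eqref{GE}, the variance identification \eqref{convergence-of-variance}, and the triangular-array Lemma~\ref{lem:triangle}. The one cosmetic difference is in condition (ii): you establish that $\{\sigma_{p_k}\}$ is Cauchy and define $\sigma_f^2$ as its limit, whereas the paper additionally shows, by a $\limsup/\liminf$ sandwich, that this limit equals $\lim_n n\Var[\lf L_{n,\beta}, f\rt]$ --- both versions suffice for the theorem as stated.
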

\begin{proof}
Let $f \in C^1_p(\R)$.
Since the probability measure $\bar \mu_\alpha$ is determined by moments, the space of all polynomials is dense in $L^2(\bar \mu_\alpha)$ \cite[Corollary~2.50]{Deift-book-1999}. Take a sequence of polynomials $\{p_k\}_{k \ge 1}$ which converges to $f'$ in $L^2(\bar \mu_\alpha)$, and let $P_k$ be a primitive function of $p_k$, that is, $P_k' = p_k$. We claim that the limit 
\begin{equation}\label{limit-variance}
 	\sigma_f^2 = \lim_{n \to \infty} n \Var[\lf L_{n,\beta}, f\rt]
\end{equation}
exists and that 
\begin{equation}\label{variance-approximation}
	\sigma_{f}^2 = \lim_{k \to \infty} \sigma_{P_k}^2.
\end{equation}
Indeed, the estimate \eqref{GE} applying to $f - P_k$ yields,
\[	
	n\Var[\lf L_{n,\beta}, f - P_k \rt] \le  \lf \bar L_{n,\beta}, (f' - p_k)^2\rt .
\]
Letting $n \to \infty$, we have 
\[
	\limsup_{n \to \infty} n\Var[\lf L_{n,\beta}, f - P_k \rt] \le \lf \bar \mu_\alpha, (f' - p_k)^2\rt.
\]
Thus by the triangle inequality 
\begin{align*}
	&\limsup_{n \to \infty} (n \Var[\lf L_{n,\beta}, f\rt])^{1/2} \\
	&\le \lim_{n \to \infty} (n \Var[\lf L_{n,\beta}, P_k\rt])^{1/2} 
	 + \limsup_{n \to \infty} (n \Var[\lf L_{n,\beta}, f - P_k\rt])^{1/2}\\
	 &\le \sigma_{P_k} + \lf \bar \mu_\alpha, (f' - p_k)^2\rt^{1/2},
\end{align*}
and 
\begin{align*}
	&\liminf_{n \to \infty} (n \Var[\lf L_{n,\beta}, f\rt])^{1/2} \\
	 &\ge \lim_{n \to \infty} (n \Var[\lf L_{n,\beta}, P_k\rt])^{1/2} 
	 - \limsup_{n \to \infty} (n \Var[\lf L_{n,\beta}, f - P_k\rt])^{1/2}\\
	 &\ge \sigma_{P_k} - \lf \bar \mu_\alpha, (f' - p_k)^2\rt^{1/2}.
\end{align*}
Here we have used the equation~\eqref{convergence-of-variance} for the polynomial $P_k$.
It follows that the limit  
\[	
	\lim_{n \to \infty} (n \Var[\lf L_{n,\beta}, f\rt])^{1/2} =: \sigma_f
\]
exists, and that
\[
	|\sigma_f - \sigma_{P_k}| \le \lf\bar\mu_\alpha, (f' - p_k)^2\rt^{1/2} \to 0 \text{ as } k \to \infty,
\]
which proves the equations \eqref{limit-variance} and \eqref{variance-approximation}.

Let 
\begin{align*}
	Y_n &= {\sqrt{n}}(\langle L_{n,\beta}, f \rangle - \E[\langle L_{n,\beta}, f \rangle] ),\\
	X_{n,k} &= {\sqrt{n}} (\langle L_{n,\beta}, P_k \rangle - \E[\langle L_{n,\beta}, P_k \rangle] ).
\end{align*}
We are going to check three conditions in Lemma~\ref{lem:triangle}. Conditions (i) and (ii) are clear. Indeed, for any $k$, since $P_k$ is a polynomial, it follows from Theorem~\ref{thm:poly} that
\[
	X_{n, k} \dto \Normal(0, \sigma_{P_k}^2) =: X_k \text{ as } n \to \infty.
\]
In addition, $X_k$ converges in distribution to $\Normal (0, \sigma_f^2)$ as $k \to \infty$ by the equation~\eqref{variance-approximation}.

 For the 
condition (iii), recall that 
\[
	\Var[X_{n,k} - Y_n] \le \lf \bar L_{n,\beta}, (f' - p_k)^2\rt,
\]
and hence 
\[
	\lim_{n \to \infty} \Var[X_{n,k} - Y_n] \le \lf \bar \mu_\alpha, (f' - p_k)^2\rt \to 0 \as k \to \infty.
\]
Therefore, for any $\varepsilon > 0$,
		\begin{align*}
			\lim_{k \to \infty} \limsup_{n \to \infty} \Prob(|X_{n,k} - Y_n| \ge \varepsilon) \le \lim_{k \to \infty} \limsup_{n \to \infty} \frac{1}{\varepsilon^2} \Var[X_{n,k} - Y_n] = 0.
		\end{align*}
The theorem is proved. 
\end{proof}

\subsubsection{Local law}
The purpose of this subsection is to establish Condition L$'$ with $\theta = \bar \mu_\alpha (E)$ for Gaussian beta ensembles in the regime that $n \beta = 2 \alpha$. Note that Condition G holds as a consequence of the general result in Section~\ref{sec:G}. Therefore once Condition L$'$ is proved, Theorem~\ref{Poisson} follows, namely, we have
\begin{theorem}
As $n \to \infty$ with $n \beta = 2\alpha$, the point process 
\[
	\xi_n = \sum_{j = 1}^n \delta_{n(\lambda_j - E)}
\] 
converges weakly to a homogeneous Poisson point process with intensity $\bar \mu_\alpha(E)$.
\end{theorem}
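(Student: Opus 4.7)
The theorem reduces, via Theorem~\ref{thm:Poisson-main} applied with $J_n = T_{n,\beta}$, to checking Conditions A, B, G, and L. Conditions A and B are immediate: the diagonal is $\Normal(0,1)$, and the subdiagonal entries $\chitilde_{(n-i)\beta}$ satisfy $\E[b_i^T]\le M_B$ uniformly in $n,i$ for any $T>0$, since $(n-i)\beta\le 2\alpha$ and $\chitilde_k$ has moments of all orders. Condition G follows from Theorem~\ref{thm:exponential-decay} because the Gaussian density is fluctuation regular. The only nontrivial task is Condition L$'$ with $\theta=\bar\mu_\alpha(E)$.

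To prove L$'$, I would use a frozen-coefficient strategy. Partition $[1,n]$ into consecutive blocks $C_p$ of length $\ell_n$ with $\log n \ll \ell_n \ll n$, with midpoint $k_p$ so that the points $k_p/n$ sweep out $[0,1]$. Starting from $\E[\xi_n(f_\zeta)] = \frac{1}{n}\sum_x \E[\Image G_n(z;x,x)]$ at $z = E+\zeta/n$, the argument of Lemma~\ref{lem:approximate-xi-n} first replaces $G_n(z;x,x)$ by $G_{T_{n,\beta}^{C_p}}(z;x,x)$ for $x$ in the interior of its block. Within $C_p$ the subdiagonal degrees $(n-i)\beta$ differ from the frozen value $(n-k_p)\beta$ by $O(\ell_n/n)$; a second resolvent comparison then replaces $T_{n,\beta}^{C_p}$ by the restriction to $[1,\ell_n]$ of the i.i.d.\ Jacobi matrix $J^{(p)} := J_{(1-k_p/n)\alpha}$. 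Applying the i.i.d.\ local law (Lemma~\ref{lem:local-law-iid}) block-by-block gives $\frac{1}{\ell_n}\sum_{x\in C_p}\E[\Image G_{J^{(p)}}(z;x,x)]\to \pi\bar\mu_{(1-k_p/n)\alpha}(E)$, and the assembled Riemann sum yields $\E[\xi_n(f_\zeta)] \to \pi\int_0^1 \bar\mu_{u\alpha}(E)\,du$.

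It remains to identify $\int_0^1 \bar\mu_{u\alpha}(E)\,du$ with $\bar\mu_\alpha(E)$; this is the ``nontrivial identity'' flagged in the introduction, and I would derive it indirectly. Running the same frozen-coefficient analysis at the level of moments --- much simpler because $p(T_{n,\beta})(j,j)$ for a polynomial $p$ depends only on entries within a fixed window of $j$ and no resolvent bounds are needed --- produces $\bra{\bar L_{n,\beta},p}\to\int_0^1 \bra{\bar\mu_{u\alpha},p}\,du$. Comparing with the independently known global law $\bra{\bar L_{n,\beta},p}\to\bra{\bar\mu_\alpha,p}$ gives $\bra{\bar\mu_\alpha,p}=\int_0^1\bra{\bar\mu_{u\alpha},p}\,du$ for every polynomial $p$; moment determinacy of $\bar\mu_\alpha$ together with continuity of the densities upgrades this to the pointwise identity.

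The main obstacle is the frozen-coefficient resolvent comparison. Since $|\Image z|\sim 1/n$, the naive bound $\|G\|_\infty\le 1/|\Image z|\sim n$ is far too crude to absorb an $O(\ell_n/n)$ entry perturbation on $\ell_n$ sites in each of $n/\ell_n$ blocks. As in the proof of Lemma~\ref{lem:approximate-xi-n}, one must combine fractional-moment estimates $\E[|G|^s]\le C_s$ ($s<1$) with H\"older's inequality against Condition B to recover a genuine $o(1)$ error uniformly across blocks, then choose $\ell_n$ (for instance $\ell_n=n^{1/2}$) to balance this against the localization error coming from the exponential decay of Green's functions.
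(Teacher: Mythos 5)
Your reduction to Theorem~\ref{thm:Poisson-main} and your verification of Conditions A, B, G are correct and match the paper. The gap is in the proposed proof of Condition L$'$. You acknowledge the obstacle created by $|\Image z|\sim 1/n$, but your proposed fix (balancing fractional moments against the block length $\ell_n$) cannot work. Carry out the resolvent comparison honestly: the error in $\E[\Image G(z;x,x)]$ caused by perturbing the subdiagonal at site $j$ inside $x$'s block involves a product of two Green's functions, and the fractional-moment/H\"older machinery yields an estimate of the shape $|\Image z|^{-(2-\varepsilon)}\cdot(\text{perturbation size at }j)\cdot e^{-\gamma|x-j|}$ for some small $\varepsilon>0$. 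The exponential factor only tames the sum over $j$; it does nothing to the prefactor $|\Image z|^{-(2-\varepsilon)}\sim n^{2-\varepsilon}$. Since the per-entry perturbation is at best $O(\ell_n/n)$ and the $j$-sum is $O(1)$, the error per $x$ is $\gtrsim n^{2-\varepsilon}\cdot\ell_n/n=n^{1-\varepsilon}\ell_n\to\infty$ for every admissible $\ell_n$, including $\ell_n=O(1)$. This is exactly why Lemma~\ref{lem:delta} in the paper is restricted to $z=E+\zeta/n^\delta$ with $\delta\le 1/4$: there the offending factor is only $n^{\delta(2-\varepsilon)}$, which the $\sqrt{\alpha j/n}$ perturbation size and exponential decay can beat. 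No choice of block length rescues the frozen-coefficient comparison at scale $1/n$.

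The paper's route around the scale-$1/n$ wall uses two ideas absent from your outline. First, the Dirichlet structure of the G$\beta$E weights gives the identity $\E[\xi_n(f_\zeta)]=\E[\Image m_n(z)]$, so only the $(1,1)$ Green's function entry is needed; all perturbation sites then lie at distance $\ge j$ from index $1$ in a single global coupling between $T_{n,\beta}$ and $J_\alpha^{[1,n]}$, and the exponential decay is exploited over the whole interval rather than block by block. Second, having established the local law only at the coarse scale $n^\delta$, $\delta\le 1/4$, the paper proves (Lemmas~\ref{lem:key-estimate}--\ref{lem:local-continuity}) a quantitative near-equicontinuity of the density $\bar\nu_{n,\beta}$ on windows of width $n^{-\delta'}$ for \emph{any} $\delta'>0$, directly from the explicit joint eigenvalue density, and Lemma~\ref{lem:Stieltjes-transform} then transfers the coarse-scale statement to scale $1/n$. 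A secondary issue: even granting your frozen-coefficient limit, the identity $\int_0^1\bar\mu_{u\alpha}(E)\,du=\bar\mu_\alpha(E)$ as written is false --- the block-local law produces the \emph{doubly-infinite} density of states of $J_{u\alpha}$, not the half-line spectral measure $\bar\mu_{u\alpha}$, and the second moments already disagree ($\int_0^1(1+u\alpha)\,du=1+\alpha/2$ versus $\langle\bar\mu_\alpha,x^2\rangle=1+\alpha$); you would have to track this distinction carefully if the rest of the argument were repaired.
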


To show the local law, we need some preliminary notations and results. The $m$-function of a Jacobi matrix $J$ (finite or infinite) is defined as $m(z) = (J - z)^{-1}(1,1)$, where $J$ is required to be essentially self-adjoint in the infinite case. Note that the $m$-function is nothing but the Stieltjes transform of the spectral measure of $J$. Denote by $m_n(z), m_\alpha(z)$ and $m_{\alpha, n}(z)$ the $m$-functions of $T_{n,\beta}, J_\alpha$ and $J_{\alpha}^{[1,n]}$, respectively. Since the spectral measure and the empirical distribution of Gaussian beta ensembles have the same mean, we have
\begin{equation}
	\E[ \xi_n( f_\zeta)] = \E[\Image m_n(E + \frac{\zeta}{n})]. 
\end{equation}
Let $G_n(z), G_\alpha(z)$ and $G_{\alpha, n}(z)$ be the Green's functions of $T_{n,\beta}, J_\alpha$ and $J_{\alpha,n}$, respectively. We recall a result on the exponential decay of Green's functions from Theorem~\ref{thm:exponential-decay}.

\begin{lemma}\label{lem:bounded-estimate}
Let $G(z)$ stand for any of $G_n(z), G_\alpha(z)$ and $G_{\alpha, n}(z)$. Then for $0<s<1/4$, there are positive constants $M$ and $\gamma$ such that 
	\[
			\E[|G(\lambda + i \tau; 1, x)|^s] \le M e^{-\gamma (x - 1)},
	\]	
		for $\lambda \in [-\Lambda, \Lambda]$ and $\tau > 0$.
\end{lemma}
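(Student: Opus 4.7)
The plan is to reduce the lemma for each of the three Green's functions to Theorem~\ref{thm:exponential-decay}, which in turn requires verifying the hypotheses of Theorem~\ref{thm:Schenker-4} uniformly in $n$ (with $n\beta = 2\alpha$ fixed). For the semi-infinite matrix $J_\alpha$, I would then pass from the finite truncations $J_\alpha^{[1,n]}$ to the infinite operator through a strong-resolvent / Fatou argument.

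First I would check the uniform bounds. All three matrices have i.i.d.\ $\Normal(0,1)$ diagonal, whose density is bounded by $(2\pi)^{-1/2}$ (Condition~A) and is smooth and strictly positive on $\R$, hence fluctuation regular. For the subdiagonal I would use the explicit moment formula
\[
\E[\chitilde_k^T] = \frac{\Gamma(k/2 + T/2)}{\Gamma(k/2)},
\]
which is continuous in $k \in (0,\infty)$ and satisfies $\E[\chitilde_k^T] \sim \Gamma(T/2)\, k/2 \to 0$ as $k \to 0^+$. For $J_\alpha$ and $J_\alpha^{[1,n]}$ the subdiagonal is constantly $\chitilde_{2\alpha}$, while for $T_{n,\beta}$ the subdiagonal parameters $(n-i)\beta$ all lie in $(0, 2\alpha]$; in either case Condition~B holds for every $T>1$ with a constant $M_B = M_B(\alpha, T)$ independent of $n$ and $i$.

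With the hypotheses of Theorem~\ref{thm:Schenker-4} verified uniformly in $n$, Theorem~\ref{thm:exponential-decay} (applied with exponent $s' = 2s \in (0, 1/2)$) directly produces constants $M, \gamma > 0$, independent of $n$ but depending on $\Lambda$ and $s$, such that
\[
\E[|G_n(\lambda + i\tau; 1, x)|^s] \le M e^{-\gamma(x-1)}
\quad\text{and}\quad
\E[|G_{\alpha,n}(\lambda + i\tau; 1, x)|^s] \le M e^{-\gamma(x-1)}
\]
for all $\lambda \in [-\Lambda, \Lambda]$ and $\tau > 0$. This handles two of the three matrices.

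For the infinite operator $J_\alpha$, the i.i.d.\ positive sequence $\{b_i\}$ satisfies $\sum 1/b_i = \infty$ a.s., so $J_\alpha$ is essentially self-adjoint on the finitely supported vectors and $G_\alpha(z)$ is well defined on $\C_+$. Since $J_\alpha^{[1,n]} \psi = J_\alpha \psi$ for every $\psi$ supported in $[1, n-1]$, the truncations converge to $J_\alpha$ in the strong resolvent sense, and hence $G_{\alpha,n}(z; 1, x) \to G_\alpha(z; 1, x)$ almost surely for each fixed $z \in \C_+$ and each fixed $x$. Fatou's lemma applied to the nonnegative functions $|G_{\alpha,n}(z; 1, x)|^s$ then yields
\[
\E[|G_\alpha(z; 1, x)|^s] \le \liminf_{n \to \infty} \E[|G_{\alpha,n}(z; 1, x)|^s] \le M e^{-\gamma(x-1)},
\]
with the same $M, \gamma$ as above. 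The only mildly delicate point is the uniform control of $\E[\chitilde_k^T]$ over the whole range $k \in (0, 2\alpha]$: without the favorable $k \to 0^+$ behavior of $\Gamma(k/2 + T/2)/\Gamma(k/2)$, the subdiagonal moments of $T_{n,\beta}$ could blow up with $n$ and Theorem~\ref{thm:Schenker-4} would fail to apply uniformly. Everything else is bookkeeping on the constants in the cited theorems together with a textbook Fatou argument.
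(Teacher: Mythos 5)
Your proposal is correct and follows exactly the route the paper intends: the paper states Lemma~\ref{lem:bounded-estimate} without a written proof, treating it as an immediate corollary of Theorem~\ref{thm:exponential-decay}, and your three steps (verify Conditions~A and B uniformly in $n$ for the Gaussian diagonal and $\chitilde$ off-diagonals, apply Theorem~\ref{thm:exponential-decay} with exponent $2s\in(0,1/2)$ to $G_n$ and $G_{\alpha,n}$, then pass to the semi-infinite $G_\alpha$ by strong resolvent convergence of $J_\alpha^{[1,n]}\to J_\alpha$ together with Fatou) are precisely the details the authors leave implicit. The one point you flag as delicate — uniform control of $\E[\chitilde_k^T]=\Gamma(k/2+T/2)/\Gamma(k/2)$ over $k\in(0,2\alpha]$ — is indeed the crux of verifying Condition~B uniformly in $n$ for $T_{n,\beta}$, and your observation that the ratio tends to $0$ as $k\to 0^+$ handles it correctly.
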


\begin{lemma}\label{lem:convergence-of-malpha}
Let $\delta > 0$ and $\zeta = \sigma + i \tau \in \C_+$ be given. Then the following statements hold.
\begin{itemize}
	\item[\rm (i)] As $n \to \infty$, 
	\[
		\E[|m_{\alpha, n}(E + \frac{\zeta}{n^\delta}) - m_\alpha(E + \frac{\zeta}{n^\delta})|] \to 0. 
	\]
	\item[\rm (ii)] $\E[\Image m_\alpha(E + \frac{\zeta}{n^\delta})] = \Image \E[m_\alpha(E + \frac{\zeta}{n^\delta})] \to \pi \bar \mu_\alpha(E)$ as $n \to \infty$.
	\item[\rm(iii)] Consequently, $\E[\Image m_{\alpha, n}(E + \frac{\zeta}{n^\delta})] \to \pi \bar \mu_\alpha(E)$ as $n \to \infty$.
\end{itemize}

\end{lemma}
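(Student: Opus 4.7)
The plan is to treat the three parts in order: a resolvent identity plus the decay estimate of Lemma~\ref{lem:bounded-estimate} for (i), the classical Stieltjes inversion theorem for (ii), and the triangle inequality for (iii).

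For part (i), I would compare $J_\alpha$ to the block operator $B := J_\alpha^{[1,n]} \oplus J_\alpha^{[n+1, \infty)}$, which differs from $J_\alpha$ only by the rank-two coupling between sites $n$ and $n+1$ with weight $b_n$. Applying the resolvent identity $G_\alpha - G_B = -G_\alpha (J_\alpha - B) G_B$ at entry $(1,1)$ produces two terms, but the one containing $G_B(z; n+1, 1)$ vanishes because $1$ and $n+1$ lie in different blocks of $B$; what remains is
\[
m_\alpha(z) - m_{\alpha, n}(z) = -\, b_n\, G_\alpha(z; 1, n+1)\, G_{\alpha, n}(z; n, 1).
\]
Applying H\"older's inequality with exponents $(2, 4, 4)$, and using that $\E[b_n^2]$ is bounded independently of $n$, it then remains to control $\E[|G|^4]$ for each of the two Green's function factors at the scaled spectral point $z = E + \zeta/n^\delta$. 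Interpolating between the trivial bound $|G(z; \cdot, \cdot)| \le 1/\Image z = n^\delta/\tau$ and the fractional exponential decay of $\E[|G|^s]$ supplied by Lemma~\ref{lem:bounded-estimate} for $s < 1/4$, one gets
\[
\E[|G(z; 1, y)|^4] \le (n^\delta/\tau)^{4-s}\, \E[|G(z; 1, y)|^s] \le C\, (n^\delta)^{4-s}\, e^{-\gamma (y-1)}.
\]
Plugging in $y = n+1$ and $y = n$, the resulting estimate is polynomial in $n^\delta$ times $e^{-\gamma n/2}$, hence tends to zero.

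For part (ii), the equality $\E[\Image m_\alpha(z)] = \Image \E[m_\alpha(z)] = \Image S_{\bar\mu_\alpha}(z)$ is linearity of the expectation. The convergence then follows because $z_n := E + \zeta/n^\delta$ approaches $E$ non-tangentially, the ratio $\Image z_n / |z_n - E| = \tau / |\zeta|$ being a fixed positive constant, and because the density $\bar\mu_\alpha$ is smooth on $\R$ by the explicit formula $\bar\mu_\alpha(E) = e^{-E^2/2}/(\sqrt{2\pi}\, |\hat f_\alpha(E)|^2)$ recalled in the introduction; the Fatou-type theorem for Stieltjes transforms then gives $\Image S_{\bar\mu_\alpha}(z_n) \to \pi\, \bar\mu_\alpha(E)$. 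Part (iii) is immediate from the triangle inequality
\[
|\E[\Image m_{\alpha, n}(z)] - \pi \bar\mu_\alpha(E)| \le \E[|m_{\alpha, n}(z) - m_\alpha(z)|] + |\E[\Image m_\alpha(z)] - \pi \bar\mu_\alpha(E)|,
\]
whose right-hand side vanishes in the limit by (i) and (ii).

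The main obstacle is the $n^\delta$-inflation of $1/\Image z$ in part (i): because Lemma~\ref{lem:bounded-estimate} only controls a small fractional power $|G|^s$ with $s < 1/4$, one pays a polynomial factor $(n^\delta)^{O(1)}$ to promote it to an $L^4$-bound, and it is essential that the site separation grows genuinely like $n$, so that the exponential factor $e^{-\gamma n}$ dominates this polynomial loss for every fixed $\delta > 0$.
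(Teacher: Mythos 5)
Your proposal is correct and fills in exactly the details that the paper's terse proof ("easily follows from the resolvent equation and the exponential decay of Green's functions\dots") leaves implicit, so it matches the paper's approach. The decoupling of $J_\alpha$ into $J_\alpha^{[1,n]} \oplus J_\alpha^{[n+1,\infty)}$, the vanishing of the cross-block Green's function entry, the H\"older bound after interpolating the fourth power against the trivial bound $|G|\le (\Image z)^{-1}$ with the fractional-moment decay of Lemma~\ref{lem:bounded-estimate}, and the observation that the exponential decay in $n$ beats the $(n^\delta)^{O(1)}$ loss, are all consistent with how the paper handles the analogous estimate in Lemma~\ref{lem:delta} (where the same $1/\Image z$-inflation is paid and absorbed by exponential decay in the site separation). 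Parts (ii) and (iii) are the expected Stieltjes inversion argument for a measure with continuous density and the triangle inequality, respectively; both are fine.
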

\begin{proof}
(i) easily follows from the resolvent equation and the exponential decay of Green's functions. (ii) follows from the fact that $\E[m_\alpha(z)]$ is the Stieltjes transform of $\bar \mu_\alpha$, a probability measure with continuous density. (iii) is a direct consequence of (i) and (ii).
\end{proof}

\begin{lemma}\label{lem:delta}
Let $\zeta = \sigma + i \tau \in \C_+$ be given. Then for $0 <\delta \le 1/4$, 
\[
	\E[|m_n(E + \frac{\zeta}{n^\delta}) - m_{\alpha, n}(E + \frac{\zeta}{n^\delta})|] \to 0 \text{ as $n \to \infty$ with $n\beta = 2 \alpha$},
\]
and hence
\begin{equation}\label{local-law-at-delta}
		 \E[ \Image m_n(E + \frac{\zeta}{n^\delta})] \to \pi \bar\mu_\alpha (E).
	\end{equation}
\end{lemma}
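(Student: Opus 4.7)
The plan is to couple the Jacobi matrices $T_{n,\beta}$ and $J_\alpha^{[1,n]}$ on a common probability space and then estimate the difference $m_n(z) - m_{\alpha,n}(z)$ via the resolvent identity. Since both matrices have i.i.d.\ $\Normal(0,1)$ diagonals, I would take the diagonals literally equal. For the subdiagonals $b_k \sim \chitilde_{(n-k)\beta}$ (in $T_{n,\beta}$) and $\tilde b_k \sim \chitilde_{2\alpha}$ (in $J_\alpha^{[1,n]}$), the gamma-addition identity lets us set $\tilde b_k^2 := b_k^2 + Z_k$ with independent $Z_k \sim \Gam(\alpha k/n, 1)$, since $b_k^2 \sim \Gam(\alpha(1 - k/n),1)$. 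The elementary inequality $\sqrt{a+b} - \sqrt{a} \le \sqrt{b}$ then yields the key coupling estimate
\[
	\E|\tilde b_k - b_k|^2 \le \E Z_k = \alpha k/n,
\]
which is small when $k \ll n$ and only bounded when $k$ is of order $n$.

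With this coupling, apply the resolvent identity $(A^{-1} - B^{-1})_{1,1} = \bigl(A^{-1}(B - A)B^{-1}\bigr)_{1,1}$ with $A = T_{n,\beta} - z$ and $B = J_\alpha^{[1,n]} - z$. Since $B-A$ is tridiagonal with zero diagonal, this gives
\[
	m_n(z) - m_{\alpha,n}(z) = \sum_{k=1}^{n-1}(\tilde b_k - b_k)\bigl[G_n(z;1,k)\,G_{\alpha,n}(z;k+1,1) + G_n(z;1,k+1)\,G_{\alpha,n}(z;k,1)\bigr].
\]
Each Green's function appearing has $1$ as one of its indices, so Theorem~\ref{thm:exponential-decay} applies directly. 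Two applications of Cauchy--Schwarz give
\[
	\E\bigl|(\tilde b_k - b_k)\,G_n\,G_{\alpha,n}\bigr| \le \bigl(\E|\tilde b_k - b_k|^2\bigr)^{1/2}\bigl(\E|G_n|^4\bigr)^{1/4}\bigl(\E|G_{\alpha,n}|^4\bigr)^{1/4}.
\]
To control the fourth moments, I would interpolate between the trivial bound $|G(z;\cdot,\cdot)|\le 1/\Image z = n^{\delta}/\tau$ and the fractional moment decay of Theorem~\ref{thm:exponential-decay}: for any $s_0\in(0,1/4)$,
\[
	\E|G(z;1,k)|^{4} \le (n^{\delta}/\tau)^{4-s_0}\,M\,e^{-\gamma(k-1)}.
\]

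Putting the bounds together, the contribution of index $k$ to $\E|m_n - m_{\alpha,n}|$ is of order $n^{\delta(4-s_0)/2 - 1/2}\,\sqrt{k}\, e^{-\gamma(k-1)/2}$, and the resulting series in $k$ converges. The total is thus $O\bigl(n^{\delta(4-s_0)/2 - 1/2}\bigr)$, which tends to $0$ precisely when $\delta(4-s_0) < 1$. Choosing $s_0$ arbitrarily close to $1/4$ allows any $\delta < 4/15$, comfortably including the range $0 < \delta \le 1/4$ in the statement. The second assertion \eqref{local-law-at-delta} then follows by splitting
\[
	\E\Image m_n - \pi\bar\mu_\alpha(E) = \E\Image(m_n - m_{\alpha,n}) + \bigl(\E\Image m_{\alpha,n} - \pi\bar\mu_\alpha(E)\bigr)
\]
and invoking Lemma~\ref{lem:convergence-of-malpha}(iii) for the second term.

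The main obstacle is the balance between three competing estimates: the coupling error $\sqrt{\E|\tilde b_k - b_k|^2}\lesssim\sqrt{k/n}$ is small only for $k\ll n$; the exponential decay of Green's functions cuts off contributions from large $k$ at the scale $\log n / \gamma$; but the trivial bound $|G|\le n^{\delta}/\tau$ inflates each summand by a large power of $n^{\delta}$. These fit together only because the fractional moment exponent in Theorem~\ref{thm:exponential-decay} can be taken arbitrarily close to $1/4$, which is just enough to admit $\delta = 1/4$. Lifting the restriction to the physically correct scale $\delta = 1$ (as would be needed to establish Condition~L$'$ in its raw form at scale $1/n$) would require a genuinely new ingredient.
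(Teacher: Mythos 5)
Your proof is correct and follows essentially the same strategy as the paper: the gamma-addition coupling of $T_{n,\beta}$ with $J_\alpha^{[1,n]}$ via $d_k^2 = b_k^2 + c_k^2$ with $c_k^2 \sim \Gam(\alpha k/n,1)$, expansion by the resolvent identity, and a balance between the coupling error $\sqrt{k/n}$, the exponential Green's function decay, and the trivial bound $|G|\le n^\delta/\tau$. The only cosmetic difference is that the paper bounds $|G_n|\le 1/\Image z$ outright and uses one Cauchy--Schwarz after interpolating $|G_{\alpha,n}| \le (1/\Image z)^{1-s/2}|G_{\alpha,n}|^{s/2}$, whereas you apply Cauchy--Schwarz twice and interpolate both fourth moments; both give the identical threshold $\delta < 1/(4-s)$ with $s<1/4$, hence $\delta<4/15 \ge 1/4$.
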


\begin{proof}
We use the following coupling: 
\begin{align*}
\{a_j\}_{j = 1}^n &\sim \Normal(0,1); \\
\{b_j^2\}_{j = 1}^{n - 1} &\sim \Gam \left(\frac{(n - j) \beta}{2}, 1 \right) = \Gam \left(\alpha(1 - \frac{j}n), 1\right);\\
\{c_j^2\}_{j = 1}^{n - 1} &\sim \Gam \left(\frac{ \alpha j}{n}, 1\right);\\
d_j^2 &:= b_j^2 + c_j^2 \sim \Gam (\alpha, 1);
\end{align*}
	\[
	T_{n,\beta} =  \begin{pmatrix}
		a_1		&b_1	\\
		b_1		&a_2		&b_2		\\
					
												&\ddots		&\ddots		&\ddots \\
		&& b_{n - 1} &a_n
			\end{pmatrix};\quad 
			J_{\alpha, n} =  \begin{pmatrix}
		a_1		&d_1	\\
		d_1		&a_2		&d_2		\\
					
												&\ddots		&\ddots		&\ddots \\
		&& d_{n - 1} &	a_n
			\end{pmatrix}.
\]
Then by the resolvent equation, 
\begin{align*}
	&m_n(z) - m_{\alpha, n}(z) = G_n(z;1,1) - G_{\alpha, n}(z; 1,1) \\
	&=\sum_{x = 1}^{n - 1} (d_x - b_x)\Big\{ G_n(z; 1, x)G_{n, \alpha}(z; x+1, 1) + G_n(z; 1, x + 1)G_{n, \alpha}(z; x, 1) \Big\}.
\end{align*}
A general term in the above sum can be estimated as follows 
\begin{align*}
	&\E[|(d_x - b_x)G_n(z; 1, x)G_{n, \alpha}(z; x+1, 1)|] \\
	&\le \frac{1}{(\Image z)^{2 -  s/2}} \E[c_x | G_{n, \alpha}(z; x+1, 1)|^{s/2}] \\
	&\le \frac{1}{(\Image z)^{2 - s/2}} \E[c_x^2]^{1/2} \E[| G_{n, \alpha}(z; x+1, 1)|^{s}]^{1/2}  \\
	&\le \frac { n^{\delta (2 - s/2)}}{\tau^{2 - s/2}} \left(\frac{\alpha x}{n}\right)^{1/2} M^{1/2} e^{-\gamma x /2} \\
	&= \tilde M  n^{-\varepsilon} \sqrt x e^{-\gamma x /2}.
\end{align*}
Here $z = E + \zeta/n^{\delta}$, $\varepsilon = 1/2 - 2\delta + \delta s/2 > 0$, $M$ and $\gamma$ are the constants in Lemma~\ref{lem:bounded-estimate}. Therefore 
\[
	\E[|m_n(z) - m_{\alpha, n}(z)|] \le 2 \tilde M  n^{-\varepsilon} \sum_{x = 1}^{\infty} \sqrt x e^{-\gamma x /2} \to 0 \text{ as } n\to \infty.
\]
The proof is complete.
\end{proof}

Our next aim is to extend the equation~\eqref{local-law-at-delta} to hold for any $\delta > 0$. Note that the local law is nothing but the equation~\eqref{local-law-at-delta} with $\delta = 1$. Our argument is based on the following result.

\begin{lemma}\label{lem:Stieltjes-transform}
Assume that $\{\mu_n\} $ is a sequence of  probability measures whose densities $\mu_n(x)$ are uniformly bounded, that is, $\mu_n(x) \le C$, for all $x \in \R$ and all $n$. Let $E$ be fixed and $\{\tau_n\}$ be a sequence of positive numbers tending to infinity. Assume that for any $M > 0$, 
	\begin{equation}\label{uniform-continuity}
		\sup_{|t| \le \frac{M}{\tau_n}} |\mu_n(E + t) - \mu_n(E)| \to 0 \text{ as } n \to \infty.
	\end{equation}
Then for any $\zeta = \sigma + i \tau \in \C_+$,
\[
	\Image S_{\mu_n}(E + \frac{\zeta}{\tau_n}) - \pi \mu_n(E) \to 0   \text{ as } n \to \infty.
\]
Here recall that $S_{\mu_n}(z)$ denotes the Stieltjes transform of $\mu_n$.
\end{lemma}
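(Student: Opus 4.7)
The plan is to rewrite $\Image S_{\mu_n}(E+\zeta/\tau_n)$ as a Poisson-kernel smoothing of $\mu_n$ after rescaling the integration variable, and then show this smoothing is essentially $\pi\mu_n(E)$ by combining the local regularity hypothesis near $E$ with the uniform $L^\infty$ bound away from $E$. Concretely, writing out the imaginary part of the Stieltjes transform with density $\mu_n(x)$ and substituting $x=E+u/\tau_n$, one gets
\[
  \Image S_{\mu_n}\!\left(E+\frac{\zeta}{\tau_n}\right)
  = \int_{\R} \frac{\tau}{(u-\sigma)^2+\tau^2}\, \mu_n\!\left(E+\frac{u}{\tau_n}\right) du.
\]
Since $\int_\R \tau/((u-\sigma)^2+\tau^2)\,du = \pi$, the desired conclusion reduces to showing
\[
  I_n := \int_{\R} \frac{\tau}{(u-\sigma)^2+\tau^2}\Big[\mu_n\!\left(E+\tfrac{u}{\tau_n}\right)-\mu_n(E)\Big] du \longrightarrow 0.
\]

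Next I would split $I_n = I_n^{\le M} + I_n^{>M}$ at a cutoff $|u|=M$. On the inner piece $|u|\le M$, the integrand is bounded in absolute value by $\sup_{|t|\le M/\tau_n}|\mu_n(E+t)-\mu_n(E)|$ times the integral of the Poisson kernel, which is at most $\pi$; the hypothesis \eqref{uniform-continuity} sends this to $0$ as $n\to\infty$ for every fixed $M$. On the outer piece $|u|>M$, the uniform bound $\mu_n\le C$ together with $\mu_n(E)\le C$ yields
\[
  |I_n^{>M}| \le 2C \int_{|u|>M} \frac{\tau}{(u-\sigma)^2+\tau^2}\,du,
\]
which is independent of $n$ and tends to $0$ as $M\to\infty$. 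A standard two-parameter $\varepsilon/2$ argument (first choose $M$ large, then $n$ large) closes the proof.

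The execution is entirely routine; there is no real obstacle. The only point needing a bit of care is that the density $\mu_n$ itself varies with $n$, so one must not try to pass to a pointwise limit of $\mu_n(E)$ — the claim is merely that the \emph{difference} $\Image S_{\mu_n}(E+\zeta/\tau_n)-\pi\mu_n(E)$ vanishes, which is exactly what the rescaling combined with the local equicontinuity hypothesis \eqref{uniform-continuity} delivers. The uniform $L^\infty$ bound is what makes the tail estimate independent of $n$, and this is essential since without it the Poisson-kernel mass at infinity could conceivably concentrate.
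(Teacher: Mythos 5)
Your proof is correct and follows essentially the same route as the paper's: rescale the integration variable so that $\Image S_{\mu_n}$ becomes a Poisson-kernel average of $\mu_n$ near $E$, subtract $\pi\mu_n(E)$ using total mass $\pi$, split at a cutoff $M$, bound the inner piece by the local equicontinuity hypothesis and the tail by the uniform $L^\infty$ bound, then let $M\to\infty$ after $n\to\infty$. The paper's only cosmetic difference is that it changes variables so the kernel becomes the standard $1/(1+y^2)$, whereas you keep the shifted kernel $\tau/((u-\sigma)^2+\tau^2)$; the estimates are identical.
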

\begin{proof}
It follows from the definition of the Stieltjes transform that 
	\begin{align*}
		\Image S_{\mu_n}(E + \frac{\zeta}{\tau_n}) - \pi \mu_n(E) &=  \int_\R \frac{t_n \mu_n(x) dx}{(x - E - \sigma_n)^2 + t_n^2} - \pi \mu_n(E) \\
		&= \int_\R \frac{\mu_n(E + \sigma_n + t_n y)dy}{1 + y^2} - \int_\R \frac{\mu_n(E)dy}{1 + y^2}\\
		&= \int_\R \frac{(\mu_n(E + \sigma_n + t_n y) - \mu_n(E))dy}{1 + y^2},
	\end{align*}
where $\sigma_n = \frac{\sigma}{\tau_n}$ and $t_n = \frac{\tau}{\tau_n}$. Given $\varepsilon > 0$, we first choose an $M > 0$ such that 
\[
	\int_{|y| > M} \frac{1}{1 + y^2} < \varepsilon,
\]
and then choose an $n_\varepsilon$ such that for $n > n_\varepsilon$,
\[
	\sup_{|t| \le \frac{|\sigma| + \tau M}{\tau_n}} |\mu_n(E + t) - \mu_n(E)| < \varepsilon .
\]
Now for $n > n_\varepsilon$, it is clear that 
\begin{align*}
		|\Image S_{\mu_n}(E + \frac{\zeta}{\tau_n}) - \pi \mu_n(E)| &\le  \int_\R \frac{|\mu_n(E + \sigma_n + t_n y) - \mu_n(E)|dy}{1 + y^2}\\
		&= \int_{|y| \le M}(\cdots) + \int_{|y| > M}(\cdots) \\
		&\le \int_{|y| \le M} \frac{\varepsilon dy}{1 + y^2}  + \int_{|y|>M}\frac{2Cdy}{1 + y^2}\\
		&\le (\pi + 2C)\varepsilon.
	\end{align*}
Therefore $\Image S_{\mu_n}(E + \frac{\zeta}{\tau_n}) - \pi \mu_n(E) \to 0$, which completes the proof. 
\end{proof}

We now show that for any $\delta > 0$, the condition \eqref{uniform-continuity} holds for  $\{\tau_n = n^\delta\}$ with respect to the sequence of the mean measures $\{\bar \nu_{n, \beta}\}$ in the regime that $n\beta \to 2\alpha$. Recall that the mean measure $\bar \nu_{n, \beta}$ coincides with the mean of the empirical measure $L_{n,\beta}$. Thus, its density $\bar \nu_{n, \beta}(E)$ is given by 
\[
	\bar \nu_{n, \beta}(E) = \int_{\R^{n - 1}} p_{n, \beta} (\lambda_1,  \dots, \lambda_{n-1}, E) d\lambda_1 \cdots d\lambda_{n-1} ,
\]
where $p_{n, \beta}(\lambda)$ is the joint density of Gaussian beta ensembles
\[
	p_{n, \beta}(\lambda) = p_{n, \beta}(\lambda_1, \lambda_2, \dots, \lambda_n) = \frac{1}{Z_{n, \beta}} |\Delta(\lambda)|^\beta e^{-\frac12(\lambda_1^2 + \cdots + \lambda_n^2)},
\]
with 
\[
	Z_{n, \beta} = (2\pi)^{\frac{n}{2}} \prod_{j = 1}^n \frac{\Gamma(1 + j\beta/2)}{\Gamma(1 + \beta/2)}.
\]
We further express the density $\bar \nu_{n, \beta}(E)$ as follows
\begin{align*}
	\bar \nu_{n,\beta}(E) 
	&= \frac{Z_{n-1, \beta}}{Z_{n, \beta}} e^{-\frac12 E^2} \int_{\R^{n - 1}} \prod_{1\le j \le n-1} |E - \lambda_j|^\beta p_{n-1,\beta}(\lambda) d\lambda.
\end{align*}
Note that Wegner's estimate implies that $\bar \nu_{n,\beta}(E) \le \pi M_A =\sqrt{\pi/2}$. However, we will derive an upper bound for the density $\bar \nu_{n,\beta}(E)$ directly from the above expression.

Assume that $n \beta \le \kappa$, where $\kappa\in \{2,4,6,\dots\}$. By using the following inequality
\[
	x_1^{\alpha_1} \cdots x_n^{\alpha_n} \le \alpha_1 x_1 + \cdots + \alpha_n x_n,  \quad (x_i \ge 0, \alpha_i > 0, \alpha_i + \cdots + \alpha_n = 1)
\]
with $\alpha_i = \beta/\kappa < \frac{1}{n-1}, i = 1, \dots, n-1$ and $\alpha_n = (1 - (n-1)\beta/\kappa)$, we obtain that for $n \ge 2$,
\begin{align*}
	 \prod_{1\le j \le n-1} |E - \lambda_j|^\beta  \le \alpha_n + \sum_{j = 1}^{n - 1} \alpha_j |E-\lambda_j|^{\kappa} &\le 1 + \frac{1}{n - 1} \sum_{j = 1}^{n - 1}2^{\kappa - 1} (E^\kappa + \lambda_j^\kappa)\\
	  &\le 2^{\kappa - 1}  \left(1 + E^{\kappa} + \frac{1}{n - 1}\sum_{j = 1}^{n-1}\lambda_j^{\kappa} \right).
\end{align*}
Thus,
\begin{equation}\label{bound-for-density}
	\bar \nu_{n,\beta}(E)  \le \frac{Z_{n-1, \beta}}{Z_{n, \beta}} e^{-\frac12 E^2} 2^{\kappa - 1} (1 + E^{\kappa} + \Ex[\bra{L_{n-1,\beta}, x^{\kappa} }]).
\end{equation}
It is clear that under the condition $n\beta \le \kappa$, $\Ex[\bra{L_{n-1,\beta}, x^{\kappa} }]$ is uniformly bounded. In addition, as $n \to \infty$ with $n\beta \to 2\alpha$,
\[
	 \frac{Z_{n-1, \beta}}{Z_{n, \beta}} = \frac{1}{\sqrt{2\pi}}\frac{\Gamma(1 + \beta/2)}{\Gamma(1 + n\beta/2)} \to \frac{1}{\sqrt{2\pi} \Gamma(1 + \alpha)} .
\]
Thus the density $\bar\nu_{n,\beta}(E)$ is uniformly bounded, that is, $\bar \nu_{n, \beta}(E) \le C = C(\kappa)$.

 We now study the difference $\bar \nu_{n,\beta} (E + t) - \bar \nu_{n, \beta} (E)$ under the condition that $n\beta \le \kappa$ with $\beta < 1$, and $|t| \le 1/2$. To begin with, we estimate roughly as follows
\begin{align}
	&|\bar \nu_{n,\beta}(E + t) -\bar \nu_{n,\beta}(E)| \nonumber\\
	&\le \frac{Z_{n-1, \beta}}{Z_{n, \beta}} |e^{-\frac12 (E+t)^2} - e^{-\frac12 E^2}| \int_{\R^{n - 1}} \prod_{1\le j \le n-1} |E +t - \lambda_j|^\beta p_{n-1,\beta}(\lambda) d\lambda\nonumber\\
	&\quad + \frac{Z_{n-1, \beta}}{Z_{n, \beta}} e^{-\frac12 E^2} \int_{\R^{n - 1}} \bigg| \prod_{1\le j \le n-1} |E +t - \lambda_j|^\beta - \prod_{1\le j \le n-1} |E - \lambda_j|^\beta \bigg| p_{n-1,\beta}(\lambda) d\lambda \nonumber\\
	&\le C\bigg( |t| +   \int_{\R^{n - 1}} \bigg| \prod_{1\le j \le n-1} |E +t - \lambda_j|^\beta - \prod_{1\le j \le n-1} |E - \lambda_j|^\beta \bigg| p_{n-1,\beta}(\lambda) d\lambda \bigg). \label{difference-estimate}
\end{align}
Here $C=C(E, \kappa)$ is a constant.

\begin{lemma}\label{lem:key-estimate}
There is a constant $C = C(E, \kappa)$ such that for $n\beta \le \kappa$  with $\beta < 1$, and $|t|<1/2$, 
	\begin{align*}
		E_{n,i}(t) &:= \int_{\R^{n - 1}} \big| |E+t-\lambda_i|^\beta - |E -\lambda_i|^\beta \big| \prod_{j < i} |E +t - \lambda_j|^\beta \prod_{j > i} |E - \lambda_j|^\beta  p_{n-1,\beta}(\lambda) d\lambda \\
		&\le C(\beta |t| + |t| (1 - |\frac t 2|^\beta)).
	\end{align*}
\end{lemma}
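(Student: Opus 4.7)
The plan is to transfer the analysis from the increment $|E+t-\lambda_i|^\beta - |E-\lambda_i|^\beta$ to its derivative via the integral representation
\[
\bigl||E+t-\lambda_i|^\beta - |E-\lambda_i|^\beta\bigr| \;\le\; \beta \int_0^{|t|} |E+\mathrm{sgn}(t)\,s - \lambda_i|^{\beta-1}\, ds,
\]
which holds since $s\mapsto |E+s-\lambda_i|^\beta$ is absolutely continuous for $\beta>0$. After Fubini, bounding $E_{n,i}(t)$ reduces to estimating $\int_\R \Phi_t(\lambda_i)\,h(\lambda_i)\,d\lambda_i$, where $h(\lambda_i) := \int G(\lambda)\,p_{n-1,\beta}(\lambda)\,d\lambda_{(i)}$ is the weighted marginal with $G = \prod_{j<i}|E+t-\lambda_j|^\beta\prod_{j>i}|E-\lambda_j|^\beta$, and $\Phi_t$ is the closed-form expression for the $s$-integral.

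I then split the $\lambda_i$-axis by $r(\lambda_i)=\mathrm{dist}(\lambda_i, I_t)$, where $I_t$ denotes the segment joining $E$ and $E+t$. On the far region $\{r\ge 1\}$ the mean value theorem gives $\bigl||E+t-\lambda_i|^\beta - |E-\lambda_i|^\beta\bigr|\le \beta|t|$, so the contribution is bounded by $C\beta|t|$. On the intermediate region $\{r\in[|t|/2,1]\}$ the MVT improves to $\beta|t|\,r^{\beta-1}$, and the identity $\int_{|t|/2}^1 r^{\beta-1}\,dr = (1-(|t|/2)^\beta)/\beta$ delivers a contribution of exactly $C|t|\bigl(1-(|t|/2)^\beta\bigr)$, the second term of the claimed bound. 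On the segment $I_t$ itself, the difference $|(t-s)^\beta - s^\beta|$ (with $s=\lambda_i - E$) vanishes at the midpoint, and a direct calculation gives $\int_0^{|t|}|(|t|-s)^\beta - s^\beta|\,ds = 2|t|^{1+\beta}(1-2^{-\beta})/(\beta+1)$, which one verifies is bounded by a universal multiple of $|t|(1-(|t|/2)^\beta)$ in the range $0<\beta<1$, $|t|<1/2$.

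The uniform bounds $\int_\R h\,d\lambda_i=\E[G]\le C$ and $\|h\|_\infty \le C=C(E,\kappa)$ on any bounded range of $\lambda_i$, which are needed in each of the three regions above, rest on controlling mixed integrals of the form $\E\bigl[\prod_{j\in S_1}|E+t-\lambda_j|^\beta\prod_{j\in S_2}|E-\lambda_j|^\beta\bigr]$ for disjoint $S_1,S_2\subseteq\{1,\dots,n-1\}$. These are estimated by Cauchy-Schwarz together with the AM-GM argument behind \eqref{bound-for-density}: $\prod_{j\in S}|y-\lambda_j|^{2\beta}\le |S|^{-1}\sum_{j\in S}|y-\lambda_j|^{2|S|\beta}$ is dominated by $C(1+|\lambda_j|^{2\kappa})$ under $n\beta\le \kappa$, and uniform moment bounds of the Gaussian beta ensemble in the high-temperature regime do the rest.

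The hard part is the buffer subregion $\{0<r<|t|/2\}\setminus I_t$: the MVT bound $\beta|t|\,r^{\beta-1}$ integrated over $r\in(0,|t|/2)$ yields only $|t|(|t|/2)^\beta$, i.e.\ $2^{-\beta}|t|^{1+\beta}$, which is not absorbed by $\beta|t|+|t|(1-(|t|/2)^\beta)$ when $\beta$ is small. The expected remedy is to rewrite $\bigl(|E-\lambda_i|^\beta + |E+t-\lambda_i|^\beta\bigr)G(\lambda)$ on the buffer (after using the absorbing part of $\Phi_t$) as a sum of two full $(n-1)$-fold mixed products $\prod_{j\in S}|E+t-\lambda_j|^\beta\prod_{j\in S^c}|E-\lambda_j|^\beta$, condition on $\lambda_{(i)}$, and exploit the explicit conditional density $\propto e^{-\lambda_i^2/2}\prod_{j\ne i}|\lambda_i-\lambda_j|^\beta$ and its Vandermonde-type repulsion to extract the additional $\beta$-factor needed to reduce $|t|^{1+\beta}$ to a quantity absorbable by $|t|(1-(|t|/2)^\beta)$. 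This conditional analysis is the technical heart of the proof.
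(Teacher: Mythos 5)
Your framework — marginalize over $\lambda_j$, $j\ne i$, bound the weighted marginal $h(\lambda_i)$ by $C(E,\kappa)(1+|\lambda_i|^{2\kappa})e^{-\lambda_i^2/2}$ via the AM--GM device behind \eqref{bound-for-density}, and then control the one--dimensional $\lambda_i$-integral — is the same as the paper's. The paper's one-dimensional step is however a single closed-form computation: it bounds the contribution of $\{|E-\lambda_i|>1\}$ by $C\beta|t|$ via the mean value theorem, and on $\{|E-\lambda_i|\le 1\}$ it evaluates
\[
\int_{-1}^1\bigl||x+t|^\beta-|x|^\beta\bigr|\,dx=\frac{1}{\beta+1}\bigl((1+|t|)^{1+\beta}-(1-|t|)^{1+\beta}-2^{1-\beta}|t|^{\beta+1}\bigr)
\]
exactly and Taylor-expands, absorbing everything in one go.

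You instead cut $\{|E-\lambda_i|\le 1\}$ into three further pieces, and the piece you flag — the buffer $\{0<r<|t|/2\}\setminus I_t$ — is indeed where your argument has a gap, but your diagnosis of what closes it is off. The MVT bound $\beta|t|\,r^{\beta-1}$ is simply too lossy near $r=0$ (the true increment stays bounded by $|t|^\beta$ there, whereas the MVT majorant blows up), and the remedy is elementary: on the buffer, $\Phi_t$ (your integral representation) is an \emph{equality}, equal to $(u+|t|)^\beta-u^\beta$ with $u=r(\lambda_i)$, and
\[
\int_0^{|t|/2}\bigl((u+|t|)^\beta-u^\beta\bigr)\,du=\frac{|t|^{\beta+1}}{\beta+1}\Bigl(\bigl(\tfrac32\bigr)^{\beta+1}-1-\bigl(\tfrac12\bigr)^{\beta+1}\Bigr).
\]
The bracket vanishes at $\beta=0$ and is smooth on $[0,1]$, hence is $\le C\beta$, so the buffer contributes $\le C\beta|t|^{1+\beta}\le C\beta|t|$ and is absorbed by the $\beta|t|$ term — no hidden cancellation needed. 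By contrast, the remedy you propose — conditioning on $\lambda_{(i)}$ and invoking the conditional density $\propto e^{-\lambda_i^2/2}\prod_{j\ne i}|\lambda_i-\lambda_j|^\beta$ for ``Vandermonde-type repulsion'' — cannot deliver the missing $\beta$-factor: the Vandermonde factor repels $\lambda_i$ from the \emph{random} points $\lambda_j$, not from the \emph{deterministic} energy $E$, and it gives no uniform smallness of the conditional density on an $O(|t|)$-neighborhood of $E$. So as written the buffer step is both unproved and pointed in the wrong direction; either do the explicit integral above on that region, or (more simply, as in the paper) avoid the four-way split altogether and compute the single closed form on $|E-\lambda_i|\le 1$.
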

\begin{proof}
We use the same argument as in proving the estimate \eqref{bound-for-density} to deduce that
\[
\frac{Z_{n-2,\beta}}{Z_{n-1, \beta}}\int_{\R^{n - 2}} \prod_{j < i} |E +t - \lambda_j|^\beta \prod_{j > i} |E - \lambda_j|^\beta \prod_{j \ne i} |\lambda_i - \lambda_j|^\beta  p_{n-2,\beta}(\lambda) d\lambda \le  M_1(1 + \lambda_i^{2\kappa}),
\]
where $M_1 = M_1(E, \kappa)$ is a constant. Thus,
\begin{align*}
	E_{n,i}(t) 
	&\le M_1 \int_\R \big| |E+t-\lambda_i|^\beta - |E -\lambda_i|^\beta \big|  (1 + \lambda_i^{2\kappa}) e^{-\frac12 \lambda_i^2} d\lambda_i \\
	&=M_1 \left( \int_{|E - \lambda_i| \le 1}(\cdots) + \int_{|E - \lambda_i| > 1}(\cdots) \right) =: M_1 ((I) + (II)).
\end{align*}
In case $|E - \lambda_i| > 1$, it follows from the mean value theorem that  
\[
	\big| |E+t-\lambda_i|^\beta - |E -\lambda_i|^\beta \big| \le 2\beta |t|,
\]
and hence 
\[
	(II) \le \int_{|E - \lambda_i| > 1} 2\beta |t|  (1 + \lambda_i^{2\kappa}) e^{-\frac12 \lambda_i^2} d\lambda_i \le 2\beta |t| \int_\R (1 + \lambda_i^{2\kappa}) e^{-\frac12 \lambda_i^2} d\lambda_i =: M_2 \beta |t|.
\]

When $|E - \lambda_i| \le 1$, it is clear that 
\[
	(I) \le M_3 \int_{|E - \lambda_i| \le 1}  \big| |E+t-\lambda_i|^\beta - |E -\lambda_i|^\beta \big| d\lambda_i = M_3 \int_{-1}^1  \big| |x + t|^\beta - |x|^\beta \big| dx,
\]
where $M_3 = \sup_{\lambda_i} (1 + |\lambda_i|^{2\kappa}) e^{-\frac12 \lambda_i^2}$.
The last integral is easily calculated and its value is given by 
\[
	\int_{-1}^1  \big| |x + t|^\beta - |x|^\beta \big| dx = \frac{1}{\beta + 1} ((1 + |t|)^{1 + \beta} - (1 - |t|)^{1 + \beta} - 2^{1 -\beta}|t|^{\beta + 1}).
\]
For $0 < \beta < 1$ and $|t| \le 1/2$, it follows from Taylor's theorem that
\begin{align*}
	&(1 + |t|)^{1 + \beta} \le 1 + (1 + \beta)|t| + \frac{t^2}{2}\beta(1 + \beta),\\
	&(1 - |t|)^{1 + \beta} \ge 1 - (1+ \beta) |t|.
\end{align*} 
The  lemma follows by collecting all the above estimates. 
\end{proof}

Since the second term in the estimate \eqref{difference-estimate} is bounded by the sum of $\{E_{n,i}(t)\}_{i = 1}^{n-1}$, Lemma~\ref{lem:key-estimate} implies the following 
\begin{lemma}\label{lem:final-estimate}
There is a constant $C = C(E, \kappa)$ such that for $n\beta \le \kappa$ with $\beta < 1$ and  $|t| < 1/2$, 
	\begin{equation}\label{final-estimate}
	|\bar \nu_{n,\beta}(E + t) -\bar \nu_{n,\beta}(E)| \le C(|t| +  n|t| (1 - |\frac t 2|^\beta)).
	\end{equation}
\end{lemma}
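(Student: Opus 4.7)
The plan is to combine the preliminary decomposition already established in~\eqref{difference-estimate} with Lemma~\ref{lem:key-estimate} via a standard telescoping identity for products. Everything is a finite computation once the telescoping is in place; no new probabilistic input is required.

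First, I would reduce to the integral term in~\eqref{difference-estimate}. That estimate already absorbs the contribution of the Gaussian factor $e^{-(E+t)^2/2}-e^{-E^2/2}$ into a term of size $C|t|$ (for bounded $E$), so the remaining task is to bound
\[
I_n(t) := \int_{\R^{n-1}} \bigg| \prod_{j=1}^{n-1} |E+t-\lambda_j|^\beta - \prod_{j=1}^{n-1} |E-\lambda_j|^\beta \bigg|\, p_{n-1,\beta}(\lambda)\, d\lambda.
\]
Here I would apply the telescoping identity
\[
\prod_{j=1}^{n-1} x_j - \prod_{j=1}^{n-1} y_j = \sum_{i=1}^{n-1} \bigg(\prod_{j<i} x_j\bigg)(x_i - y_i)\bigg(\prod_{j>i} y_j\bigg),
\]
with $x_j=|E+t-\lambda_j|^\beta$ and $y_j=|E-\lambda_j|^\beta$. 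Taking absolute values inside the integral and recognizing each resulting integral as exactly the quantity $E_{n,i}(t)$ defined in Lemma~\ref{lem:key-estimate} gives
\[
I_n(t) \le \sum_{i=1}^{n-1} E_{n,i}(t) \le (n-1)\, C\bigl(\beta|t| + |t|\bigl(1 - |t/2|^\beta\bigr)\bigr).
\]

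Finally, I would plug this back into~\eqref{difference-estimate}. Using the hypothesis $n\beta\le\kappa$, the term $n\beta|t|$ is bounded by $\kappa|t|$ and is absorbed into $C|t|$; the remaining term gives exactly $Cn|t|(1-|t/2|^\beta)$. Combining with the $C|t|$ from the Gaussian prefactor yields
\[
|\bar\nu_{n,\beta}(E+t)-\bar\nu_{n,\beta}(E)| \le C\bigl(|t| + n|t|(1-|t/2|^\beta)\bigr),
\]
with a constant $C=C(E,\kappa)$, as claimed. I do not foresee a genuine obstacle: the only mildly delicate point is making sure that the constants in Lemma~\ref{lem:key-estimate} are uniform in $i$ and in $n$ (which they are, since the proof of that lemma already uses the same uniform-in-$i$ bound $M_1(1+\lambda_i^{2\kappa})$ obtained from the $n\beta\le\kappa$ hypothesis), so that the sum over $i$ indeed produces only the advertised factor of $n$.
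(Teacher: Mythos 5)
Your proof is correct and follows exactly the route the paper intends: telescope the difference of products so that the integral in~\eqref{difference-estimate} is bounded by $\sum_{i=1}^{n-1} E_{n,i}(t)$, then apply Lemma~\ref{lem:key-estimate} and use $n\beta\le\kappa$ to absorb the $(n-1)\beta|t|$ term into $C|t|$. The paper leaves the telescoping step implicit (it merely says the second term is bounded by the sum of the $E_{n,i}(t)$); you have simply spelled out what is already baked into the definition of $E_{n,i}(t)$.
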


\begin{lemma}\label{lem:local-continuity}
Let $\delta_n$ be a sequence of positive numbers such that as $n \to \infty$,
\[
 \delta_n \to 0;  \delta_n \log \delta_n \to 0.
\]
Then in the regime that $n\beta \to 2\alpha$, 
\[
	\sup_{|t| \le \delta_n} |\bar \nu_{n, \beta}(E + t) - \bar \nu_{n, \beta}(E)| \to 0.
\]
In particular, for any $\delta > 0$ and  any $M > 0$, as $n \to \infty$ with $n\beta \to 2\alpha$,
	\[
		\sup_{|t| \le \frac{M}{n^\delta}} |\bar \nu_{n, \beta}(E + t) - \bar \nu_{n, \beta}(E)| \to 0.
	\]
\end{lemma}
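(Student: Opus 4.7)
My plan is to invoke Lemma~\ref{lem:final-estimate} and reduce everything to an elementary analysis of the scalar function $s \mapsto s\log(2/s)$. In the regime $n\beta \to 2\alpha$, for all sufficiently large $n$ one has $\beta < 1$, $n\beta \le \kappa$ for some fixed even integer $\kappa$, and $\delta_n < 1/2$, so the hypotheses of Lemma~\ref{lem:final-estimate} are satisfied and yield
\[
	|\bar \nu_{n,\beta}(E + t) -\bar \nu_{n,\beta}(E)| \le C\bigl(|t| +  n|t|(1 - |t/2|^\beta)\bigr)
\]
uniformly in $|t| \le \delta_n$. The first term is bounded by $C\delta_n \to 0$, so the whole question reduces to controlling the second.

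The key idea is that $1 - |t/2|^\beta$ is itself of order $\beta$, with an explicit logarithmic factor, so the dangerous $n$ gets absorbed into $n\beta = O(1)$. Concretely, from the elementary inequality $1 - e^x \le -x$ (valid for $x \le 0$) applied to $x = \beta \log(|t|/2)$, which is nonpositive since $|t|/2 < 1$, I obtain
\[
	1 - |t/2|^\beta = 1 - e^{\beta \log(|t|/2)} \le -\beta \log(|t|/2) = \beta \log(2/|t|),
\]
and therefore
\[
	n|t|\bigl(1 - |t/2|^\beta\bigr) \le (n\beta)\,|t|\log(2/|t|).
\]
Since $n\beta \to 2\alpha$ is bounded, it only remains to show that $\sup_{|t| \le \delta_n} |t|\log(2/|t|) \to 0$.

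For this last step I would observe that $g(s) = s\log(2/s)$ satisfies $g(0^+) = 0$ and $g'(s) = \log(2/s) - 1 > 0$ on $(0, 2/e)$, so for $\delta_n$ small the supremum is attained at $s = \delta_n$ and equals $\delta_n \log 2 - \delta_n \log \delta_n$, which vanishes by the two hypotheses on $\delta_n$. The special case $\delta_n = M/n^\delta$ is then immediate: $\delta_n \log \delta_n = (M/n^\delta)(\log M - \delta \log n) \to 0$ because polynomial decay defeats logarithmic growth.

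I do not foresee a genuine obstacle here; the argument is a clean combination of Lemma~\ref{lem:final-estimate} with the scalar inequality $1 - x^\beta \le \beta \log(1/x)$ and a one-variable calculus check. The only mildly delicate point is bookkeeping: one must verify that the regime-dependent hypotheses of Lemma~\ref{lem:final-estimate} ($\beta < 1$, $n\beta \le \kappa$, $|t| < 1/2$) are in force, which they are automatically for $n$ large, and one must use $n\beta = O(1)$ at precisely the right moment to trade the prefactor $n$ for the gain $\beta$.
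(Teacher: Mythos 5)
Your proof is correct and follows essentially the same route as the paper's: both invoke Lemma~\ref{lem:final-estimate} and then use the elementary inequality $1 - e^{x} \le -x$ for $x \le 0$ (equivalently $1 - |t/2|^\beta \le \beta\log(2/|t|)$) to trade the factor $n$ for $n\beta = O(1)$, reducing matters to $\delta_n\log\delta_n \to 0$. The only cosmetic difference is that you apply the logarithmic bound before taking the supremum and verify monotonicity of $s\mapsto s\log(2/s)$, whereas the paper takes the supremum at $t = \delta_n$ first and then applies the bound.
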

\begin{proof}
	For $n$ large enough such that $\delta_n < 1/2$ and $\beta < 1$, it follows from Lemma~\ref{lem:final-estimate} that
	\[
		\sup_{|t| \le {\delta_n}} |\bar \nu_{n, \beta}(E + t) - \bar \nu_{n, \beta}(E)| \le \sup_{|t| \le {\delta_n}}   C(|t| +  n|t| (1 - |t/ 2|^\beta)) = C(\delta_n +  n \delta_n (1 - (\delta_n/2)^\beta ) ).
	\]
Then the desired result follows from the assumption with the help of the following inequality 
\[
	1 - (\delta_n/2)^\beta = 1 - \exp(\beta (\log \delta_n -\log 2)) \le -\beta ( \log \delta_n - \log 2).
\]
The proof is complete. 
\end{proof}

\begin{theorem}[Local law]\label{thm:local-law}
As $n \to \infty$ with $n\beta \to 2 \alpha$,
		\[
			\E[ \xi_n( f_\zeta)] = \E[\Image m_n(E + \frac{\zeta}{n})] \to \pi \bar \mu_\alpha(E).
		\]
\end{theorem}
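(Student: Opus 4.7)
The plan is to extend Lemma~\ref{lem:delta} from the mesoscopic regime $\delta \le 1/4$ all the way to the microscopic scale $\delta = 1$. The starting point is the identity
\[
\E[\Image m_n(z)] = \Image S_{\bar\nu_{n,\beta}}(z),
\]
valid because $m_n(z)$ is the Stieltjes transform of the random spectral measure $\nu_{n,\beta}$, whose mean coincides with $\bar\nu_{n,\beta} = \bar L_{n,\beta}$. This recasts the quantity of interest as the imaginary part of the Stieltjes transform of a deterministic measure, which is exactly the setup of Lemma~\ref{lem:Stieltjes-transform}.

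First I would verify that Lemma~\ref{lem:Stieltjes-transform} applies to the sequence $\{\bar\nu_{n,\beta}\}$ at every scale $\tau_n = n^\delta$, $\delta > 0$. The uniform density bound $\bar\nu_{n,\beta}(E) \le C$ was established in \eqref{bound-for-density}, and the local modulus-of-continuity hypothesis \eqref{uniform-continuity} at scale $1/n^\delta$ is precisely the content of Lemma~\ref{lem:local-continuity}. Consequently, for every $\delta > 0$,
\[
\Image S_{\bar\nu_{n,\beta}}\!\left(E + \tfrac{\zeta}{n^\delta}\right) - \pi\,\bar\nu_{n,\beta}(E) \to 0.
\]

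Next, applying this with $\delta = 1/4$ and comparing with Lemma~\ref{lem:delta}, which already provides $\E[\Image m_n(E + \zeta/n^{1/4})] \to \pi\,\bar\mu_\alpha(E)$, forces the pointwise convergence of densities
\[
\bar\nu_{n,\beta}(E) \longrightarrow \bar\mu_\alpha(E).
\]
This is the conceptual core of the argument: it is the indirect mechanism, alluded to in the introduction, through which one identifies the Poisson intensity with the density of states (rather than with the expression $\theta_E$ of \cite{Peche15}). With this pointwise limit in hand, applying the boxed display once more at $\delta = 1$ yields
\[
\E[\Image m_n(E + \tfrac{\zeta}{n})] = \Image S_{\bar\nu_{n,\beta}}(E + \tfrac{\zeta}{n}) \to \pi\,\bar\mu_\alpha(E),
\]
which is the theorem. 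No further obstacle remains; Lemma~\ref{lem:delta} is confined to $\delta \le 1/4$ by the interplay between the H\"older exponent and the growth of $\E[c_x^2]$, but this mesoscopic information already suffices to pin down the limiting density, after which Lemma~\ref{lem:Stieltjes-transform} transports the conclusion from the mesoscopic scale $n^{1/4}$ down to the microscopic scale $n$.
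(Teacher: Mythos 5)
Your proposal reproduces the paper's own argument essentially line by line: you invoke Lemma~\ref{lem:local-continuity} (via \eqref{bound-for-density}) to verify the hypotheses of Lemma~\ref{lem:Stieltjes-transform} at every scale $\tau_n = n^\delta$, compare with Lemma~\ref{lem:delta} at $\delta = 1/4$ to deduce $\bar\nu_{n,\beta}(E) \to \bar\mu_\alpha(E)$, and then feed this back at $\delta = 1$. This is exactly the paper's proof; the logic and the lemmas used are identical.
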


\begin{proof}
It follows from Lemma~\ref{lem:local-continuity} that for any $\delta > 0$, the sequence $\tau_n = n^\delta$ satisfies the assumption in Lemma~\ref{lem:Stieltjes-transform} with respect to the sequence of probability measures $\bar \nu_{n, \beta}$. Thus for any $E \in \R$ and any $\zeta \in \C_+$, as $n \to \infty$ with $n\beta \to 2\alpha$,
\[
	\E[\Image m_{n}(E + \frac{\zeta}{n^\delta})] - \pi \bar \nu_{n, \beta}(E) = \Image S_{\bar \nu_{n,\beta}}(E + \frac{\zeta}{n^\delta}) - \pi \bar \nu_{n, \beta}(E) \to 0.
\]
On the other hand, for $0 < \delta \le 1/4$, Lemma~\ref{lem:delta} claims that $\Ex[\Image m_n(E + \frac{\zeta}{n^\delta})] \to \pi \bar \mu_\alpha (E)$. Thus 
\[
	\bar \nu_{n, \beta}(E) \to \bar \mu_\alpha (E) \text{ as } n \to \infty.
\]
Consequently, for any $\delta > 0$,
\[
	\Image m_n(E + \frac{\zeta}{n^\delta}) \to \bar \mu_\alpha (E) \text{ as } n \to \infty.
\]
The local law is just the case $\delta = 1$. The proof is complete.
\end{proof}

The rest of this subsection is devoted to prove the identity 
\[
	\bar \mu_\alpha(E) = \frac{1}{\sqrt{2 \pi} \Gamma(\alpha + 1)} \exp\left(-\frac{E^2}{2} + 2 \alpha \int \log |E - y| \bar\mu_\alpha(dy)  \right),
\]
which is derived by showing the existence of the limit of $\{\bar \nu_{n, \beta}(E)\}$ in another way.
\begin{lemma}[{cf.~\cite{Peche15}}]
As $n \to \infty $ with $n\beta \to 2 \alpha$, 
\[
	\bar \nu_{n, \beta}(E) \to  \frac{1}{\sqrt{2 \pi} \Gamma(\alpha + 1)} \exp\left(-\frac{E^2}{2} + 2 \alpha \int \log |E - y| \bar\mu_\alpha(dy)  \right).
\]
\end{lemma}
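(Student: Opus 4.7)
The plan is to compute the limit directly from the explicit formula
\[
\bar\nu_{n,\beta}(E) = \frac{Z_{n-1,\beta}}{Z_{n,\beta}}\, e^{-E^2/2}\, \Ex[\exp X_n]
\]
derived earlier in this subsection, where $X_n := (n-1)\beta\,\bra{L_{n-1,\beta}, f_E}$ and $f_E(y):=\log|E-y|$. The partition function ratio has already been shown to tend to $1/(\sqrt{2\pi}\,\Gamma(1+\alpha))$, so the entire task reduces to showing that $\Ex[\exp X_n] \to \exp\bigl(2\alpha\bra{\bar\mu_\alpha, f_E}\bigr)$. Since $(n-1)\beta \to 2\alpha$, this is morally just the convergence of $\bra{L_{n-1,\beta}, f_E}$ to $\bra{\bar\mu_\alpha, f_E}$, combined with passage of an expectation through an exponential.

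First I would prove convergence in probability of $X_n$ to $2\alpha\bra{\bar\mu_\alpha, f_E}$. The obstacle is the logarithmic singularity of $f_E$ at $y=E$, which prevents direct use of the global law for continuous test functions of polynomial growth. The remedy is truncation: set $f_E^{\varepsilon}(y) := \log\max(|E-y|,\varepsilon)$, which is continuous and of polynomial growth, so by the global law $\bra{L_{n-1,\beta}, f_E^{\varepsilon}} \to \bra{\bar\mu_\alpha, f_E^{\varepsilon}}$ in probability. Since $f_E^\varepsilon - f_E \ge 0$ is supported on $\{|E-y|<\varepsilon\}$, Wegner's estimate (or the sharper bound~\eqref{bound-for-density}) supplies a uniform upper bound $\bar\nu_{n-1,\beta}(y) \le C$, yielding
\[
\Ex\bra{L_{n-1,\beta}, f_E^{\varepsilon}-f_E} \le C\int_{|u|<\varepsilon}(\log\varepsilon - \log|u|)\,du = O(\varepsilon)
\]
uniformly in $n$, and the same bound holds with $\bar\mu_\alpha$ in place of $\bar\nu_{n-1,\beta}$ since its density is also bounded. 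Letting $\varepsilon\to 0$ gives the desired convergence.

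To pass the expectation through the limit I would establish uniform integrability of $\{\exp X_n\}$ by controlling an $L^{1+\delta}$-norm. For $\delta>0$ small and any even integer $\kappa$ with $(1+\delta)(n-1)\beta\le \kappa$ (which holds for all large $n$ since $(n-1)\beta\to 2\alpha$), the same AM--GM inequality used to derive~\eqref{bound-for-density}, with exponent $(1+\delta)\beta$ in place of $\beta$, yields
\[
\prod_{j=1}^{n-1}|E-\lambda_j|^{(1+\delta)\beta} \le 1 + \frac{(1+\delta)(n-1)\beta}{\kappa}\, 2^{\kappa-1}\!\left(E^\kappa + \bra{L_{n-1,\beta}, x^\kappa}\right),
\]
whose expectation remains bounded because $\Ex\bra{L_{n-1,\beta}, x^\kappa}$ converges to $\bra{\bar\mu_\alpha, x^\kappa}<\infty$. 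Combined with the convergence in probability of $\exp X_n$ to a constant, this gives $\Ex[\exp X_n]\to\exp\bigl(2\alpha\bra{\bar\mu_\alpha, f_E}\bigr)$, and multiplication by the limits of the two other factors completes the argument.

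The main obstacle will be cleanly controlling the logarithmic singularity of $f_E$. This is precisely where the earlier density estimates of this subsection are indispensable: they supply the uniform-in-$n$ upper bound on $\bar\nu_{n-1,\beta}(y)$ near $y=E$ needed to make the truncation error go to zero uniformly in $n$, and the AM--GM estimate provides the moment bound needed to upgrade convergence in probability to convergence of expectations.
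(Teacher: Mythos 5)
Your proof is correct and follows essentially the same route as the paper: factor $\bar\nu_{n,\beta}(E)$ into the partition-function ratio, the Gaussian weight, and the expectation of $\exp\bigl((n-1)\beta\bra{L_{n-1,\beta},\log|E-\cdot|}\bigr)$; prove convergence in probability of the log-linear statistic by truncating the logarithmic singularity and using the uniform density bound $\bar\nu_{n-1,\beta}(\cdot)\le C$; and upgrade to convergence of expectations via uniform integrability from an AM--GM moment bound on $\prod_j|E-\lambda_j|^{p\beta}$. The only cosmetic difference is that you take a general $L^{1+\delta}$ bound where the paper takes the $L^2$ bound ($\delta=1$); both are derived identically from the estimate leading to~\eqref{bound-for-density} and both suffice.
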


\begin{proof}
Recall that 
\[
	\bar \nu_{n,\beta}(E) = \frac{Z_{n-1, \beta}}{Z_{n, \beta}} e^{-\frac12 E^2} \int_{\R^{n - 1}} \prod_{1\le j \le n-1} |E - \lambda_j|^\beta p_{n-1,\beta}(\lambda) d\lambda,
\]
which can be rewritten as 
\[
	\bar \nu_{n,\beta}(E) =  \frac{1}{\sqrt{2\pi}}\frac{\Gamma(1 + \beta/2)}{\Gamma(1 + n\beta/2)}  e^{-\frac12 E^2} \E\left[\exp\left(\beta (n-1)\bra{L_{n-1,\beta}, {\log|E - \cdot|}}  \right) \right].
\]
Although the function $\log|E - x|$ is not continuous, we will show in Lemma~\ref{lem:log} that 
\[
	\bra{L_{n-1,\beta}, {\log|E - \cdot|}} \to \bra{\bar \mu_\alpha, \log|E - \cdot|} = \int_{\R} \log |E - y| \bar \mu_\alpha(y) dy \text{ in probability}
\]
by a truncation argument. It then follows by the continuous mapping theorem that 
	\[
		\prod_{1\le j \le n-1} |E - \lambda_j|^\beta = \exp({\beta (n-1) \bra{L_{n-1,\beta}, {\log|E - \cdot|}} }) 
		\to \bra{\bar \mu_\alpha, \log|E - \cdot|}  \text{ in probability.}
	\]
In addition, by the same argument as in proving the equation \eqref{bound-for-density}, we can also show that  for $n\beta \le \kappa$,
\[
	\Ex_{n -1, \beta}\bigg[\bigg (\prod_{1\le j \le n-1} |E - \lambda_j|^\beta \bigg)^2\bigg] \le C = C(E, \kappa).
\]
This implies that the sequence $\{\prod_{1\le j \le n-1} |E - \lambda_j|^\beta\}$ is uniformly integrable, and hence, the convergence of the expectation follows. Therefore, as $n \to \infty$ with $n\beta \to 2\alpha$,
\begin{equation*}\label{convergence-of-density}
	\bar\nu_{n,\beta}(E) \to  \frac{1}{\sqrt{2 \pi} \Gamma(\alpha + 1)} \exp\left(-\frac{E^2}{2} + 2 \alpha \int \log |E - y| \bar\mu_\alpha(dy)  \right),
\end{equation*}
which completes the proof. 
\end{proof}

\begin{lemma}\label{lem:log}
As $n \to \infty$ with  $n\beta \to 2 \alpha$, 
	\[
		\bra{L_{n, \beta}, \log|E-\cdot|} =\frac{1}{n} \sum_{j = 1}^n  \log |E - \lambda_j| \to \int_{\R} \log |E - x| \bar \mu_\alpha(x) dx \text{ in probability}.
	\]
\end{lemma}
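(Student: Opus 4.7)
The approach is a truncation argument to handle the logarithmic singularity of $\log|E-\cdot|$ at $x=E$.  The global law already established in this subsection gives $\bra{L_{n,\beta},f}\Pto\bra{\bar\mu_\alpha,f}$ for continuous $f$ of polynomial growth, so I would first replace $\log|E-\cdot|$ by a continuous truncation and then show the residual is negligible, uniformly in $n$.

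For $M>0$ set
\[
\phi_M(x) := \max\bigl(\log|E-x|,\, -M\bigr),
\]
which is continuous, equals $-M$ on $\{|E-x|< e^{-M}\}$ and equals $\log|E-x|$ otherwise, and is dominated by $\log(1+|E|+|x|)$, hence of polynomial growth.  The global law therefore delivers
\[
\bra{L_{n,\beta}, \phi_M} \Pto \bra{\bar\mu_\alpha, \phi_M} \as n\to\infty.
\]

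Writing $r_M(x):=\log|E-x|-\phi_M(x)=(\log|E-x|+M)\one_{\{|E-x|<e^{-M}\}}\le 0$, it remains to show that both $\bra{L_{n,\beta}, r_M}$ and $\bra{\bar\mu_\alpha, r_M}$ can be made arbitrarily small by choosing $M$ large, the former uniformly in $n$ (in $L^1$, and hence in probability).  Since $\bar\mu_\alpha$ has an explicit bounded density, dominated convergence yields $\bra{\bar\mu_\alpha, r_M}\to 0$ as $M\to\infty$.  For the random side, take expectations and use $\Ex[L_{n,\beta}]=\bar\nu_{n,\beta}$ together with the uniform density estimate $\bar\nu_{n,\beta}(x)\le C$ obtained in \eqref{bound-for-density}:
\[
\Ex\bigl[|\bra{L_{n,\beta}, r_M}|\bigr]=\int_{|E-x|<e^{-M}}\bigl(-\log|E-x|-M\bigr)\,\bar\nu_{n,\beta}(dx)\le 2Ce^{-M}\to 0,
\]
uniformly in $n$; Markov's inequality converts this into the corresponding probability bound.

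These three pieces combine through a routine $\varepsilon/3$ argument: given $\varepsilon>0$, first fix $M$ large enough that $|\bra{\bar\mu_\alpha, r_M}|<\varepsilon/3$ and $\sup_n\Prob(|\bra{L_{n,\beta}, r_M}|>\varepsilon/3)<\varepsilon/2$, and then invoke the convergence $\bra{L_{n,\beta},\phi_M}\Pto\bra{\bar\mu_\alpha,\phi_M}$ to control the bulk term.  The only substantive input is the uniform upper bound on the density $\bar\nu_{n,\beta}(x)$, which was already derived from the joint density of G$\beta$E via the weighted AM--GM inequality in the proof of the preceding lemma; everything else is a standard manipulation of the truncation and Markov's inequality.
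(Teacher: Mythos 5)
Your proof is correct and follows essentially the same approach as the paper's: the truncation $\phi_M$ is exactly the paper's $f_M$, the key input is the same uniform density bound $\bar\nu_{n,\beta}(x)\le C$, and the estimate $\Ex[|\bra{L_{n,\beta},r_M}|]\le 2Ce^{-M}$ is the identical computation, combined in the same way via the global law and a triangle/Markov argument.
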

\begin{proof}
	For $M > 0$, let 
	\[
		f_M(x) = \begin{cases}
			\log |E - x|, &\text{if } |E - x| \ge e^{-M},\\
			-M, &\text{if } |E - x| \le e^{-M}.
		\end{cases}
	\]
Then $f_M$ is a continuous function on $\R$ of polynomial growth. Thus, in the regime that $n\beta \to 2 \alpha$,
\[
	\frac{1}{n} \sum_{j = 1}^n f_M(\lambda_j) \to \int_\R f_M(x) \bar \mu_\alpha(x) dx \text{ in probability}.
\]
Note that the convergence also holds in $L^1$.

On the other hand, recall that $\bar\nu_{n, \beta}(x) \le C$ for all $x \in \R$. Thus 
\begin{align*}
	0 \le \Ex[\bra{L_{n, \beta}, f_M} - \bra{L_{n, \beta}, \log|E - \cdot|}] &= \int_{\R} (f_M(x) - \log|E - x|) \bar\nu_{n, \beta}(x) dx \\
	&\le C \int_{|E - x| \le e^{-M}} (-\log |E - x|  -M)dx \\
	&= 2C e^{-M}.
\end{align*}
Since the density $\bar \mu_\alpha(x)$ is also bounded by $C$, by the same estimate, we have 
\[
	0 \le \int_{\R} (f_M(x) - \log|E - x|)\bar \mu_\alpha (x) dx \le 2C e^{-M}.
\]
Thus by a standard argument using the triangular inequality, we deduce that 
\[
	\frac{1}{n} \sum_{j = 1}^n  \log |E - \lambda_j| \to \int_{\R} \log |E - x| \bar \mu_\alpha(x) dx \text{ in $L^1$, and hence in probability.}
\]
The proof is complete.
\end{proof}

\appendix
\section{Martingale difference central limit theorem}\label{app:CLT}
Suppose that, for each $n$, $X_{n1}, X_{n2},\dots$ is a martingale with respect to $\F_{n1}, \F_{n2}, \dots$. Define $Y_{nk} = X_{nk} - X_{n, k - 1}$. Suppose the $Y_{nk}$ have second moments, and put $\sigma_{nk}^2 = \E[Y_{nk}^2 | \F_{n, k -1}]$ ($\F_{n0} = \{\emptyset, \Omega\}$). The probability space may vary with $n$. If the martingale is originally defined only for $1 \le k \le r_n$, take $Y_{nk} = 0$ and $\F_{nk} = \F_{n r_n}$ for $k > r_n$. Assume that $\sum_{k =1}^\infty Y_{nk}$ and $\sum_{k = 1}^\infty \sigma_{nk}^2$ converge with probability $1$.
\begin{theorem}[{\cite[Theorem~35.12]{Billingsley}}]
Assume that 
\[
	\sum_{k = 1}^\infty \sigma_{nk}^2 \to \sigma^2 \text{ in probability as }n \to \infty,
\]
where $\sigma^2$ is a positive constant, and that
\[
	\sum_{k = 1}^\infty \E[Y_{nk}^2 ;{|Y_{nk}| \ge \varepsilon}] \to 0
\]
for each $\varepsilon >0$. Then $\sum_{k = 1}^\infty Y_{nk} \overset{d}{\to} \mathcal N(0, \sigma^2)$.
 
\end{theorem}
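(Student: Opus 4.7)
The plan is to prove this by L\'evy's continuity theorem: it suffices to show
\[
\varphi_n(t) := \E\bigl[\exp\bigl(it \sum_{k} Y_{nk}\bigr)\bigr] \to \exp(-\sigma^2 t^2/2) \as n \to \infty
\]
for each real $t$. Write $W_n = \sum_k Y_{nk}$ and $V_n^2 = \sum_k \sigma_{nk}^2$, so by hypothesis $V_n^2 \to \sigma^2$ in probability.

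First I would truncate in order to exploit the Lindeberg hypothesis. Fix $\varepsilon > 0$ and set $\tilde Y_{nk} = Y_{nk}\one_{\{|Y_{nk}| < \varepsilon\}} - \E[Y_{nk}\one_{\{|Y_{nk}| < \varepsilon\}} \mid \F_{n,k-1}]$; then $\{\tilde Y_{nk}\}$ is again a martingale difference array with $|\tilde Y_{nk}| \le 2\varepsilon$. Because $\E[Y_{nk} \mid \F_{n,k-1}] = 0$, a direct computation combined with the Lindeberg hypothesis gives $\sum_k \E[(Y_{nk} - \tilde Y_{nk})^2] \to 0$, so the sum $\tilde W_n := \sum_k \tilde Y_{nk}$ satisfies $W_n - \tilde W_n \to 0$ in $L^2$. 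A parallel argument shows that the truncated conditional variances $\tilde\sigma_{nk}^2 = \E[\tilde Y_{nk}^2 \mid \F_{n,k-1}]$ still satisfy $\sum_k \tilde\sigma_{nk}^2 \to \sigma^2$ in probability.

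The central step is a characteristic-function telescoping for $\tilde W_n$. The estimate $|e^{iy} - 1 - iy + y^2/2| \le |y|^3/6$ yields
\[
\E[e^{it\tilde Y_{nk}} \mid \F_{n,k-1}] = 1 - \frac{t^2}{2}\tilde\sigma_{nk}^2 + R_{nk}(t),\qquad |R_{nk}(t)| \le \frac{|t|^3 \varepsilon}{3}\tilde\sigma_{nk}^2.
\]
Peeling off one factor at a time via the tower property and the $\F_{n,k-1}$-measurability of all earlier factors gives
\[
\E\bigl[e^{it\tilde W_n}\bigr] = \E\Bigl[\prod_{k}\bigl(1 - \tfrac{t^2}{2}\tilde\sigma_{nk}^2 + R_{nk}(t)\bigr)\Bigr].
\]
Using $\bigl|\prod_k a_k - \prod_k b_k\bigr| \le \sum_k |a_k - b_k|$ for unit-modulus factors, the remainders contribute at most $\frac{|t|^3 \varepsilon}{3}\E[\sum_k \tilde\sigma_{nk}^2]$, which is $O(\varepsilon)$ uniformly in $n$. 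The remaining product $\prod_k(1 - \frac{t^2}{2}\tilde\sigma_{nk}^2)$ is close to $\exp(-\frac{t^2}{2}\sum_k\tilde\sigma_{nk}^2)$ by $\log(1-x) = -x + O(x^2)$ and $\tilde\sigma_{nk}^2 \le 4\varepsilon^2$, and the latter converges in probability to $e^{-\sigma^2 t^2/2}$. Taking expectations (by dominated convergence, the integrand has modulus at most $1$) and finally letting $\varepsilon \to 0$ gives $\varphi_n(t) \to e^{-\sigma^2 t^2/2}$.

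The main obstacle is the telescoping, where one must peel off factors one at a time by conditioning on $\F_{n,k-1}$, because the $Y_{nk}$ are not independent and $\prod_k e^{it\tilde Y_{nk}}$ does not factor as a product of marginal characteristic functions. Controlling the cubic remainder $\sum_k R_{nk}(t)$ requires both a uniform bound on $|\tilde Y_{nk}|$ (from truncation) and summability of the conditional variances (from hypothesis); the Lindeberg condition is precisely what allows the truncation level $\varepsilon$ to be taken arbitrarily small while preserving $\sum_k \tilde\sigma_{nk}^2 \to \sigma^2$.
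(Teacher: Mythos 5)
The paper offers no proof of this statement---it is quoted directly from Billingsley as a black box---so the only question is whether your argument stands on its own. It does not: the central identity
\[
\E\bigl[e^{it\tilde W_n}\bigr] \;=\; \E\Bigl[\prod_{k}\bigl(1 - \tfrac{t^2}{2}\tilde\sigma_{nk}^2 + R_{nk}(t)\bigr)\Bigr] \;=\; \E\Bigl[\prod_{k}\E\bigl[e^{it\tilde Y_{nk}}\mid \F_{n,k-1}\bigr]\Bigr]
\]
is false for a general martingale difference array. The tower property only peels off the \emph{last} factor: one step gives $\E\bigl[e^{it\tilde S_{n,m-1}}\,h_m\bigr]$ with $h_m = \E[e^{it\tilde Y_{nm}}\mid\F_{n,m-1}]$ a \emph{random}, $\F_{n,m-1}$-measurable quantity that is correlated with $e^{it\tilde S_{n,m-1}}$. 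It cannot be pulled out of the expectation, so the iteration stops after one step; the product of conditional characteristic functions is simply not the characteristic function of the sum (a two-step example where the conditional law of $Y_2$ depends on the sign of $Y_1$ already breaks the identity). This is precisely the obstacle you flag at the end, but ``$\F_{n,k-1}$-measurability of all earlier factors'' does not resolve it---measurability of the earlier exponentials is irrelevant; what the factorization would require is independence.

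The missing ideas are the two devices that make the classical proof work. First, one telescopes a \emph{modified} quantity such as $e^{itS_{nk} + t^2V_{nk}^2/2}$ (Billingsley) or divides by $\prod_k h_k$ (McLeish); either way each telescoping increment takes the form $\E\bigl[e^{itS_{n,k-1}+t^2V_{nk}^2/2}\bigl(\E[e^{itY_{nk}}\mid\F_{n,k-1}] - e^{-t^2\sigma_{nk}^2/2}\bigr)\bigr]$, so that only conditional quantities need to be compared and no illegitimate factorization is used. Second, one needs a preliminary stopping-time reduction (stop the martingale at the first $k$ with $V_{nk}^2 > \sigma^2+1$, say) to get a deterministic uniform bound on $\sum_k\sigma_{nk}^2$: convergence in probability to $\sigma^2$ bounds neither $\sum_k\sigma_{nk}^2$ pathwise nor its expectation. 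Without this, your estimate that the cubic remainders contribute $\frac{|t|^3\varepsilon}{3}\E[\sum_k\tilde\sigma_{nk}^2] = O(\varepsilon)$ ``uniformly in $n$'' is unjustified, and the exponential factor one must control in the corrected telescoping would not be bounded either. The truncation step at the start of your argument is fine and is indeed part of the standard proof, but it addresses a different issue and does not substitute for either of these.
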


In this paper, we use the following version which is an easy consequence of the previous theorem. Let $\{S_n\}_{n = 1}^\infty$ be a sequence of random variables. For each $n$, we consider some filtrations $\{\emptyset, \Omega\} = \F_{n0} \subset \F_{n1} \subset \cdots \subset \F_{nn}$. Let $X_{nk} = \E[S_n | \F_{nk}], (0 \le k \le n)$. Define $Y_{nk} = X_{nk} - X_{n, k-1}$ and $\sigma_{nk}^2 = \E[Y_{nk}^2 | \F_{n, k-1}]$ for $1\le k \le n$. 
\begin{theorem}\label{thm:MTG}
	Assume that the following two conditions holds 
\begin{itemize}
\item[\rm (i)]
	$
		\frac{1}{n} \sum_{k = 1}^n \sigma_{nk}^2 \to \sigma^2 \text{ in probability as $n \to \infty$},
	$
	where $\sigma^2 \ge 0$ is a contant; 
\item[\rm (ii)]
	$
		\frac{1}{n^2} \sum_{k = 1}^n \E[|Y_{n,k}|^4] \to 0 \text{ as } n \to \infty.
	$
\end{itemize}
	Then 
	\[
		\frac{S_n - \E[S_n]}{\sqrt{n}} \dto \Normal(0, \sigma^2).
	\]
\end{theorem}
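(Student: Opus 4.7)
The plan is to reduce this to the theorem of Billingsley quoted just above (Theorem 35.12) by a simple rescaling and by deriving the Lindeberg-type condition from the $L^4$ hypothesis (ii).

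First I would set $\tilde Y_{nk} := Y_{nk}/\sqrt{n}$ for $1 \le k \le n$ and $\tilde Y_{nk} := 0$ for $k > n$, with the filtration $\tilde \F_{nk} := \F_{nk}$ for $k \le n$ and $\tilde \F_{nk} := \F_{nn}$ for $k > n$. Since $X_{nk} = \E[S_n \mid \F_{nk}]$ is a martingale with $X_{n0} = \E[S_n]$ and (assuming, as is implicit in the setup, that $S_n$ is $\F_{nn}$-measurable) $X_{nn} = S_n$, the telescoping sum gives
\[
\sum_{k=1}^\infty \tilde Y_{nk} = \frac{1}{\sqrt n}\sum_{k=1}^n Y_{nk} = \frac{S_n - \E[S_n]}{\sqrt n}.
\]
The corresponding conditional second moments are $\tilde\sigma_{nk}^2 = \E[\tilde Y_{nk}^2 \mid \tilde\F_{n,k-1}] = \sigma_{nk}^2/n$, so hypothesis (i) is exactly
\[
\sum_{k=1}^\infty \tilde\sigma_{nk}^2 \;=\; \frac{1}{n}\sum_{k=1}^n \sigma_{nk}^2 \;\longrightarrow\; \sigma^2 \quad \text{in probability.}
\]

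Next I would verify the Lindeberg condition required by Theorem 35.12 of Billingsley: for every $\varepsilon > 0$,
\[
\sum_{k=1}^\infty \E\bigl[\tilde Y_{nk}^2;\, |\tilde Y_{nk}| \ge \varepsilon\bigr] \longrightarrow 0.
\]
On the event $\{|\tilde Y_{nk}| \ge \varepsilon\}$ one has $\tilde Y_{nk}^2 \le \tilde Y_{nk}^4/\varepsilon^2$, so by Chebyshev/Markov
\[
\sum_{k=1}^n \E\bigl[\tilde Y_{nk}^2;\, |\tilde Y_{nk}|\ge \varepsilon\bigr] \;\le\; \frac{1}{\varepsilon^2}\sum_{k=1}^n \E[\tilde Y_{nk}^4] \;=\; \frac{1}{\varepsilon^2 n^2}\sum_{k=1}^n \E[Y_{nk}^4],
\]
which tends to zero by hypothesis (ii). This is where the fourth-moment assumption (ii) is used; it is a convenient, slightly stronger surrogate for the Lindeberg condition.

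With both conditions verified, Theorem 35.12 yields $\sum_{k=1}^\infty \tilde Y_{nk} \dto \Normal(0,\sigma^2)$, which is exactly the desired conclusion. There is essentially no hard step: the only subtlety is that the statement in the paper allows $\sigma^2 = 0$, in which case the limit is a degenerate Normal (a point mass at $0$); this is handled automatically since convergence in probability to $0$ of $\sum \tilde\sigma_{nk}^2$ together with the Lindeberg bound still forces $\sum \tilde Y_{nk} \to 0$ in probability (for instance via the standard truncation/characteristic-function argument underlying Theorem 35.12).
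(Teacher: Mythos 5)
Your reduction to Billingsley's Theorem~35.12 — rescaling by $\sqrt{n}$, reading off condition (i) for the conditional variances, and deriving the Lindeberg condition from the fourth-moment hypothesis via $\E[\tilde Y_{nk}^2;|\tilde Y_{nk}|\ge\varepsilon]\le \varepsilon^{-2}\E[\tilde Y_{nk}^4]$ — is correct and is exactly the argument the paper has in mind when it calls the statement ``an easy consequence of the previous theorem.'' Your aside on the degenerate case $\sigma^2=0$ is a worthwhile observation, since the quoted theorem assumes $\sigma^2>0$, and your resolution of it is sound.
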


\section{Convergence of random probability measures on the real line}\label{app:rpm}
Let $\cP(\R)$ be the space of all probability measures on $(\R, \cB(\R))$, where $\cB(\R)$ denotes the Borel $\sigma$-field of $\R$. A sequence of probability measures $\{\mu_n \}_{n = 1}^\infty$ is said to converge weakly to $\mu \in \cP(\R)$ if for all bounded continuous functions $f \colon \R \to \R$ (or $\C$), 
\[
	\lim_{n \to \infty} \int_\R f d\mu_n = \int_\R f d\mu.
\]
The topology of weak convergence on $\cP(\R)$ can be metrizable by the L\'evy--Prokhorov metric $\rho$, which makes $(\cP(\R), \rho)$  a separable complete metric space. We do not need the precise definition of the metric here. Let $\cB(\cP(S))$ be the Borel $\sigma$-field on $\cP(S)$.  

\begin{definition}
A random probability measure $\xi$ is a measurable map from some probability space $(\Omega, \F, \Prob)$ to $(\cP(S), \cB(\cP(S))$.
\end{definition}

Let $\xi$ be a random probability measure. Then for any Borel set $B \in \cB(\R)$, $\xi(B)$ is a usual random variable. So is $\lf \xi, f\rt$ for any non negative measurable function $f$, or bounded measurable function $f$, where $\bra{\xi, f} = \int f d\xi$.  As random variables on a metric space, concepts of almost sure convergence, convergence in probability and convergence in distribution of random probability measures are defined naturally. 
\begin{definition}
\begin{itemize}
\item[(i)]
	Let $\{\xi_n\}_{n = 1}^\infty$ and $\xi$ be random probability measures defined on the same probability space. The sequence $\{\xi_n\}$ is said to converge weakly to $\xi$ almost surely (resp.~in probability) if $\rho(\xi_n, \xi)$ converges almost surely (resp.~in probability) to $0$ as $n$ tends to infinity.
\item[(ii)] 
	Let $\{\xi_n\}_{n =1}^\infty$ and $\xi$ be random probability measures which may be defined on different probability spaces. The sequence $\{\xi_n\}$ is said to converge in distribution to $\xi$ if for any bounded continuous function $\Phi \colon \cP(\R) \to \R$ (or $\C$), 
	\[
		\E[\Phi(\xi_n)] \to \E[\Phi(\xi)] \text{ as } n \to \infty.
	\]
	
\end{itemize}
\end{definition}
The aim of this section is to show that three abstract concepts of convergence can be defined in term of usual concepts of convergence of $\lf \xi_n, f \rt$ for all bounded continuous function $f$. We need some preliminaries.

A class $\cA$ of measurable functions on $\R$ is called a convergence determining class if for any sequence of probability measures $\{\mu_n\}_{n = 1}^\infty$ and any probability measure $\mu$, the condition 
\[
	\lim_{n \to \infty} \int_\R f d\mu_n = \int_\R f d\mu, \text{ for all $f \in \cA$},
\]
implies that $\{\mu_n\}$ converges weakly to $\mu$. By definition, the space $C_b(\R)$ of bounded continuous function on $\R$ is an example of convergence determining class.

Let $\mu$ be a probability measure. A class $\cA_\mu$ of functions on $\R$ is called a convergence determining class for $\mu$ if for any sequence of probability measures $\{\mu_n\}_{n = 1}^\infty$,  the condition 
\[
	\lim_{n \to \infty} \int_\R f d\mu_n = \int_\R f d\mu, \text{ for all $f \in \cA_\mu$},
\]
implies that $\{\mu_n\}$ converges weakly to $\mu$.

Let $S_\mu (z)$ denote the Stieltjes transform of $\mu$, 
\[
	S_\mu (z) = \int_\R \frac{1}{x - z} \mu(dx), z \in \C_+.
\]
We have the following estimates:
	\begin{align*}
		|S_\mu(z)| &\le \frac{1}{\Image z};\\
		|S_\mu(z) - S_\mu(z')| &\le \frac{|z - z'|}{\Image z \Image z'}.
	\end{align*}
For $z \in \C_+$, denote by $f_z = 1/(x - z)$. Note that $\mu_n$ converges weakly to $\mu$, if and only if $S_{\mu_n}(z)$ converges to $S_\mu(z)$ for all $z \in \C_+$, which means that $\{f_z\}_{z \in \C_+}$ is a convergence determining class.
Let $D$ be a countable dense subset in $\C_+$. Then using the above estimates, we can show that the class 
\[
	\cA = \{f_{z}\}_{z \in D}
\]
is a countable convergence determining class.

The class $\cA = \{x^k\}_{k = 0}^\infty$ of monic polynomials, in general, is not a convergence determining class but it is a convergence determining class for $\mu$, provided that the probability measure $\mu$ is determined by moments.


%

\begin{theorem}\label{thm:dc}
Let $\cA \subset C_b(\R)$ be a countable convergence determining class. 
Let $\{\xi_n\}_{n = 1}^\infty$ and $\xi$ be random probability measures defined on the same probability space $(\Omega, \F, \Prob)$. Then the following statements are equivalent: 
\begin{itemize}
	\item[\rm (i)] $\xi_n$ converges weakly to $\xi$ almost surely;
	\item[\rm (ii)]	for all $f \in C_b(\R)$, $\lf \xi_n, f \rt$  converges almost surely to $\lf \xi, f\rt$;
	\item[\rm (iii)]	for all $f \in \cA$, $\lf \xi_n,  f \rt$  converges almost surely to $\lf \xi, f\rt$.
\end{itemize}
\end{theorem}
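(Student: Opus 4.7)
My plan is to establish the cyclic chain $\text{(i)} \Rightarrow \text{(ii)} \Rightarrow \text{(iii)} \Rightarrow \text{(i)}$, where the first two implications are essentially tautological and the whole content is concentrated in the step $\text{(iii)} \Rightarrow \text{(i)}$.

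For $\text{(i)} \Rightarrow \text{(ii)}$, I would proceed pathwise. By hypothesis there exists a null set $N \in \F$ such that for every $\omega \in \Omega \setminus N$, the deterministic sequence $\{\xi_n(\omega)\}$ converges weakly to $\xi(\omega)$ in $\cP(\R)$. Fix $f \in C_b(\R)$. Then by the very definition of weak convergence on $\cP(\R)$, $\lf \xi_n(\omega), f\rt \to \lf \xi(\omega), f\rt$ for every $\omega \notin N$, which is exactly the almost sure convergence claimed in (ii). The step $\text{(ii)} \Rightarrow \text{(iii)}$ is immediate from the inclusion $\cA \subset C_b(\R)$.

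The main point is $\text{(iii)} \Rightarrow \text{(i)}$, and here countability of $\cA$ is decisive: a priori (iii) gives, for each $f \in \cA$, a null set $N_f$ outside of which $\lf \xi_n, f\rt \to \lf \xi, f\rt$, and these null sets depend on $f$. Since $\cA$ is countable, I set $N := \bigcup_{f \in \cA} N_f$ which is again a null set. Then for every $\omega \in \Omega \setminus N$ and every $f \in \cA$ simultaneously, the numerical sequence $\lf \xi_n(\omega), f\rt$ converges to $\lf \xi(\omega), f\rt$. At this point $\cA$ being a convergence determining class in $C_b(\R)$ takes over: for each fixed $\omega \notin N$, the probability measures $\{\xi_n(\omega)\}$ and $\xi(\omega)$ satisfy the integral convergence against every member of $\cA$, so $\xi_n(\omega) \to \xi(\omega)$ weakly, i.e.~$\rho(\xi_n(\omega), \xi(\omega)) \to 0$. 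Since this holds off a single null set, $\xi_n$ converges weakly to $\xi$ almost surely, proving (i).

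The only potentially delicate point is the swap of quantifiers ``for every $f$, almost surely'' versus ``almost surely, for every $f$'', which is precisely what countability of $\cA$ makes legitimate; without countability the argument would fail, explaining why the hypothesis on $\cA$ is formulated that way (and also why examples such as $\cA = \{f_z\}_{z \in D}$ with $D \subset \C_+$ countable dense, constructed in the discussion preceding the theorem, are the practically useful ones).
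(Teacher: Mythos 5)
Your proof is correct and follows the same route as the paper: (i)$\Rightarrow$(ii) and (ii)$\Rightarrow$(iii) are immediate, and for (iii)$\Rightarrow$(i) you aggregate the per-function null sets using countability of $\cA$ (you take the union of the $N_f$, the paper the intersection of their complements) and then invoke the definition of a convergence determining class pathwise.
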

\begin{proof}
	It is clear that (i) implies (ii), and (ii) implies (iii). We only need to show (iii) implies (i). Assume that (iii) holds. Let 
	\[
		A_f = \{\omega : \lim_{n \to \infty}\bra{\xi_n(\omega), f} = \bra{\xi(\omega), f}\}.
	\] 
Then $\Prob(A_f) = 1$ by the assumption. Let 
	\[
		A = \bigcap_{f \in \cA} A_f.
	\] 
Then $\Prob(A) = 1$ because $\cA$ is countable. Now for $\omega \in A$, 
\[
	\lf \xi_n(\omega), f \rt\to \lf \xi(\omega), f \rt\text{ for all $f \in \cA$},
\]
which implies that $\xi_n(\omega)$ converges weakly to $\xi(\omega)$, or $\rho(\xi_n(\omega), \xi(\omega)) \to 0$ by the definition of convergence determining class. 
\end{proof}

To deal with convergence in probability, we need the following result which is analogous to the usual case (\cite[Theorem~20.5]{Billingsley}). 
\begin{lemma}
	The sequence $\{\xi_n\}$ converges weakly to $\xi$ in probability, if and only if for any subsequence $\{\xi_{n_k} \}$, there is a further subsequence $\{\xi_{n_k'} \}$ which converges weakly to $\xi$ almost surely.
\end{lemma}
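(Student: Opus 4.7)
The plan is to reduce the statement to the classical subsequence characterization of convergence in probability for real-valued random variables, applied to $\rho(\xi_n,\xi)$. By the very definition given in the appendix, $\xi_n$ converges weakly to $\xi$ in probability exactly when $\rho(\xi_n,\xi)\to 0$ in probability, and converges weakly to $\xi$ almost surely exactly when $\rho(\xi_n,\xi)\to 0$ almost surely. Hence the lemma is nothing but the well-known fact (Billingsley, Theorem 20.5) that, for real-valued random variables $Z_n$, $Z_n\to 0$ in probability if and only if every subsequence $\{Z_{n_k}\}$ has a further subsequence $\{Z_{n_k'}\}$ converging to $0$ almost surely.

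For the ``only if'' direction, assume $\xi_n\to\xi$ in probability. Given any subsequence $\{\xi_{n_k}\}$, the real-valued random variables $Z_k:=\rho(\xi_{n_k},\xi)$ still converge to $0$ in probability, so by the standard argument (choose $n_k'$ so that $\Prob(Z_{n_k'}>2^{-k})<2^{-k}$ and apply Borel--Cantelli) there is a further subsequence $\{\xi_{n_k'}\}$ along which $\rho(\xi_{n_k'},\xi)\to 0$ almost surely, i.e.\ $\xi_{n_k'}\to\xi$ weakly almost surely.

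For the ``if'' direction, suppose every subsequence admits a further almost surely weakly convergent subsequence. Then every subsequence of the real sequence $Z_n=\rho(\xi_n,\xi)$ has a further subsequence tending to $0$ almost surely. A standard contradiction argument then gives $Z_n\to 0$ in probability: if not, there would exist $\varepsilon>0$, $\delta>0$ and a subsequence with $\Prob(Z_{n_k}>\varepsilon)\ge\delta$ for all $k$, but any further a.s.-convergent subsubsequence would force $\Prob(Z_{n_k'}>\varepsilon)\to 0$, a contradiction. Therefore $\xi_n\to\xi$ weakly in probability.

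There is no real obstacle here, since once the separable metric structure of $(\cP(\R),\rho)$ has been set up (as done in the appendix), the statement is purely a transfer of the classical real-variable subsequence principle to the metric-space setting. The only care needed is to invoke the definitions of almost sure and in-probability weak convergence given earlier in this appendix, so that convergence of the $\xi_n$ can be rewritten as convergence of the real-valued random variables $\rho(\xi_n,\xi)$.
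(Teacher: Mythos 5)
Your proof is correct and takes exactly the approach the paper intends: the paper states the lemma with only a reference to the classical real-variable result (Billingsley, Theorem~20.5), relying on the reduction to the scalar random variables $\rho(\xi_n,\xi)$ that you spell out. You simply make explicit the Borel--Cantelli and contradiction steps that the paper leaves to the reader.
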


\begin{theorem}
Let $\cA \subset C_b(\R)$ be a countable convergence determining class. 
Let $\{\xi_n\}_{n = 1}^\infty$ and $\xi$ be random probability measures defined on the same probability space $(\Omega, \F, \Prob)$. Then the following statements are equivalent: 
\begin{itemize}
	\item[\rm (i)] $\xi_n$ converges weakly to $\xi$ in probability;
	\item[\rm (i)]	for all $f \in C_b(\R)$, $\lf \xi_n, f \rt$  converges in probability to $\lf \xi, f\rt$;
	\item[\rm (i)]	for all $f \in \cA$, $\lf \xi_n,  f \rt$  converges in probability to $\lf \xi, f\rt$.
\end{itemize}
\end{theorem}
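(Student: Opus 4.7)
The plan is to reduce everything to the almost-sure version already established in Theorem~\ref{thm:dc}, using the subsequence characterization of convergence in probability stated just before the theorem. The implications (i) $\Rightarrow$ (ii) and (ii) $\Rightarrow$ (iii) are immediate: for (i) $\Rightarrow$ (ii), given any subsequence $\{\xi_{n_k}\}$, the subsequence lemma provides a further subsequence $\{\xi_{n_k'}\}$ converging weakly to $\xi$ almost surely, and then Theorem~\ref{thm:dc} gives $\bra{\xi_{n_k'},f}\to \bra{\xi,f}$ almost surely for every $f\in C_b(\R)$; applying the subsequence lemma for real random variables in reverse yields $\bra{\xi_n,f}\to\bra{\xi,f}$ in probability. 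The implication (ii) $\Rightarrow$ (iii) is trivial since $\cA \subset C_b(\R)$.

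For the nontrivial direction (iii) $\Rightarrow$ (i), I would proceed as follows. Fix an arbitrary subsequence $\{\xi_{n_k}\}$; by the subsequence lemma, it suffices to extract a further subsequence that converges weakly to $\xi$ almost surely. Enumerate $\cA = \{f_1, f_2, \dots\}$. Applying condition (iii) and the subsequence characterization of convergence in probability for the real random variables $\bra{\xi_{n_k}, f_1}$, choose a subsequence along which $\bra{\xi_{n_k}, f_1} \to \bra{\xi, f_1}$ almost surely. Apply the same argument to this new subsequence and $f_2$, and iterate. A standard diagonal extraction then produces a single subsequence $\{\xi_{n_k'}\}$ along which $\bra{\xi_{n_k'}, f_j} \to \bra{\xi, f_j}$ almost surely, simultaneously for every $j \ge 1$.

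At this point the hypotheses of Theorem~\ref{thm:dc}(iii) are satisfied along $\{\xi_{n_k'}\}$ with respect to the countable class $\cA$, so Theorem~\ref{thm:dc} yields $\xi_{n_k'}\to \xi$ weakly almost surely. Invoking the subsequence lemma once more, we conclude $\xi_n\to\xi$ weakly in probability, which is (i).

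The only mildly delicate point is the diagonal extraction producing a single sub-subsequence valid for all countably many test functions $f_j$ simultaneously; this is routine but is exactly where the countability of $\cA$ is used essentially. Everything else is bookkeeping that translates between the two formulations (random-probability-measure convergence vs.\ convergence of the real-valued integrals $\bra{\xi_n,f}$) via Theorem~\ref{thm:dc} and the subsequence lemma.
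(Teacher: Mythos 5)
Your proof is correct and follows exactly the route the paper has in mind: reduce to the almost-sure version (Theorem~\ref{thm:dc}) via the subsequence characterization of convergence in probability, with a diagonal extraction over the countable class $\cA$ to get a single almost-surely convergent sub-subsequence. The paper states only that the result is "an easy consequence of the previous lemma and Theorem~\ref{thm:dc}," and your argument is the natural way to spell that out.
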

\begin{proof}
	It is an easy consequence of the previous lemma and Theorem~\ref{thm:dc}. 
\end{proof}

\begin{corollary}
	The sequence of random probability measures $\{\xi_n\}$ converges weakly to $\xi$ almost surely (resp. in probability), if and only if $S_{\xi_n}(z)$ converges almost surely (resp. in probability) to $S_{\xi}(z)$ for all $z \in \C_+$, or for all $z \in D$, a dense subset of $\C_+$.
\end{corollary}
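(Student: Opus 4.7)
The plan is to recognize this corollary as a direct specialization of Theorem~\ref{thm:dc} and its in-probability analogue, applied to the countable convergence determining class $\cA = \{f_z : z \in D\}$ that was already singled out in the discussion preceding Theorem~\ref{thm:dc}. Here $f_z(x) = 1/(x - z)$ and $D \subset \C_+$ is a countable dense subset. The bridge between the two formulations is the tautology $\bra{\xi, f_z} = S_\xi(z)$.

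First I would handle the ``for all $z \in D$'' version. Each $f_z$ is bounded continuous on $\R$ with $\|f_z\|_\infty \le 1/\Image z$; although it is $\C$-valued, the appendix explicitly permits such test functions, and in any case one may split $f_z = \Real f_z + i \Image f_z$ into two bounded continuous real functions. The text preceding Theorem~\ref{thm:dc} already established that $\cA$ is a countable convergence determining class, using standard Stieltjes-transform estimates. Applying Theorem~\ref{thm:dc} with this $\cA$ then yields: $\xi_n \to \xi$ weakly almost surely iff $S_{\xi_n}(z) \to S_\xi(z)$ almost surely for every $z \in D$. The in-probability equivalence follows identically from the theorem stated just after Theorem~\ref{thm:dc}.

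For the ``for all $z \in \C_+$'' version, one direction is trivial, since $D \subset \C_+$, so pointwise convergence on all of $\C_+$ a fortiori gives pointwise convergence on $D$, reducing to the previous case. For the converse, if $\xi_n \to \xi$ weakly almost surely (resp.\ in probability), then for each individual $z \in \C_+$ the function $f_z$ is bounded continuous, so $\bra{\xi_n, f_z} \to \bra{\xi, f_z}$ almost surely (resp.\ in probability) by the easy implication (i)$\Rightarrow$(ii) in Theorem~\ref{thm:dc}; this is precisely $S_{\xi_n}(z) \to S_\xi(z)$.

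There is essentially no serious obstacle: all the substantive work, namely the verification that $\{f_z\}_{z \in D}$ is a convergence determining class, was completed before Theorem~\ref{thm:dc}, so the argument reduces to bookkeeping. The only mild technical point to watch is the $\C$-valuedness of $f_z$, which is handled either by appealing to the explicit allowance for $\C$-valued test functions in the appendix or by splitting into real and imaginary parts.
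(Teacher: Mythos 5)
Your proof is correct and follows the approach the paper has in mind: the corollary is stated without a proof block precisely because it is meant to follow directly from Theorem~\ref{thm:dc} (and its in-probability analogue) applied to the countable convergence determining class $\{f_z\}_{z\in D}$ already constructed in the preceding paragraph, together with the tautology $\bra{\xi, f_z} = S_\xi(z)$. Your handling of the two minor technical points—the $\C$-valuedness of $f_z$ (split into real and imaginary parts, or invoke the appendix's explicit allowance of $\C$-valued test functions) and the reduction from all of $\C_+$ to the countable dense subset $D$—is exactly what is needed.
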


\begin{theorem}
Let $\xi = \mu$ be a non-random probability measure and let $\cA_\mu$ be a countable convergence determining class for $\mu$. Let $\{\xi_n\}_{n = 1}^\infty$ be a sequence of random probability measures.  Assume that for any $f \in \cA_\mu$, the sequence $\lf \xi_n, f\rt$ is well-defined and it converges to $\lf \mu, f\rt$ almost surely (resp.~in probability). Then the sequence $\{\xi_n\}$ converges weakly to $\mu$ almost surely (resp.~in probability).
\end{theorem}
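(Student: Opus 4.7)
The plan is to mimic, almost verbatim, the proof of Theorem~\ref{thm:dc} together with the subsequence principle recorded in the lemma preceding the statement. The only change is that a general convergence determining class is replaced by a convergence determining class for the particular limit measure $\mu$, which is sufficient precisely because the limit is non-random and fixed.

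First I would treat the almost sure case. Enumerate $\cA_\mu = \{f_k\}_{k \ge 1}$ and for each $k$ let
\[
A_k = \{\omega : \lf \xi_n(\omega), f_k\rt \to \lf \mu, f_k\rt \text{ as } n\to \infty\}.
\]
By hypothesis $\Prob(A_k) = 1$ for every $k$, and since $\cA_\mu$ is countable the event $A = \bigcap_{k\ge 1} A_k$ still has full probability. For any $\omega \in A$, the deterministic sequence of probability measures $\{\xi_n(\omega)\}$ satisfies $\lf \xi_n(\omega), f\rt \to \lf \mu, f\rt$ for every $f \in \cA_\mu$, and hence converges weakly to $\mu$ by the defining property of a convergence determining class for $\mu$. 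In terms of the L\'evy--Prokhorov metric this means $\rho(\xi_n(\omega), \mu) \to 0$, which gives almost sure weak convergence.

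For the convergence-in-probability version I would invoke the subsequence lemma stated just before Theorem~\ref{thm:dc}'s in-probability counterpart: it suffices to show that every subsequence $\{\xi_{n_k}\}$ admits a further subsequence that converges weakly to $\mu$ almost surely. Fix such a subsequence. The hypothesis gives $\lf \xi_{n_k}, f\rt \to \lf \mu, f\rt$ in probability for every $f \in \cA_\mu$. Enumerating $\cA_\mu = \{f_k\}$ and applying the standard diagonal extraction (for each $f_k$ pass to an almost surely convergent subsequence and diagonalize), one obtains a further subsequence $\{n_k'\}$ along which $\lf \xi_{n_k'}, f_j\rt \to \lf \mu, f_j\rt$ almost surely for every $j$ simultaneously. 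The almost sure case then applies to this subsequence and yields $\xi_{n_k'} \to \mu$ weakly almost surely. This is exactly what the subsequence lemma needs, so $\xi_n \to \mu$ weakly in probability.

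There is no real obstacle here; the only subtlety worth flagging is that the measurability of $\omega \mapsto \rho(\xi_n(\omega), \mu)$ is what allows us to speak of convergence in probability at all. This is automatic because $\xi_n$ is by definition a measurable map into $(\cP(\R), \cB(\cP(\R)))$ and $\rho(\cdot, \mu)$ is continuous on $\cP(\R)$. Countability of $\cA_\mu$ is used exactly once, to intersect null events; dropping it would not suffice even in the non-random-limit case.
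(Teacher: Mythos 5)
Your proof is correct and takes essentially the same approach as the paper: the paper simply says the proof is analogous to that of Theorem~\ref{thm:dc}, which is exactly the full-measure intersection argument for the almost sure case, combined with the subsequence/diagonal-extraction lemma for the in-probability case, as you have written out.
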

\begin{proof}
	The proof is analogous to that of Theorem~\ref{thm:dc}. 
\end{proof}

\begin{corollary}\label{cor:moments-convergence}
	Assume that the probability measure $\mu$ is determined by its moments. Then the condition 
\[
	\lf \xi_n, x^k\rt\to \lf \mu, x^k \rt\text{ almost surely (resp.~in probability), for } k = 0,1, \dots,
\]
implies that $\{\xi_n\}$ converges weakly to $\mu$ almost surely (resp.~in probability). More generally, assume that the random probability measure $\xi$ is determined by its moments almost surely. Then the condition 
\[
	\lf \xi_n, x^k\rt\to \lf \xi, x^k \rt\text{ almost surely (resp.~in probability), for } k = 0,1, \dots,
\]
implies that $\{\xi_n\}$ converges weakly to $\xi$ almost surely (resp.~in probability). 
\end{corollary}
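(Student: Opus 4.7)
The plan is to reduce the statement to the preceding results on random probability measures, exploiting the fact, observed in the discussion of convergence-determining classes earlier, that the countable family $\cA := \{x^k\}_{k\ge 0}$ is a convergence-determining class for any moment-determined probability measure. For the first (non-random) part of the corollary, the reduction is immediate: taking $\cA_\mu = \cA$, the preceding theorem on random probability measures converging to a deterministic limit applies directly and gives $\xi_n \to \mu$ weakly in the prescribed mode (almost sure or in probability).

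For the second (random) part, the determining class depends on $\omega$, so I would argue pointwise on sample paths. Let $\Omega_0 := \{\omega : \xi(\omega) \text{ is determined by its moments}\}$, which has probability one by hypothesis, and for each $k \in \N$ set $A_k := \{\omega : \bra{\xi_n(\omega), x^k} \to \bra{\xi(\omega), x^k}\}$. In the almost-sure case each $A_k$ is a full-probability event, so
\[
	\Omega^* := \Omega_0 \cap \bigcap_{k \ge 0} A_k
\]
is itself full-probability as a countable intersection. For every $\omega \in \Omega^*$ the deterministic measure $\xi(\omega)$ is moment-determined and $\bra{\xi_n(\omega), x^k} \to \bra{\xi(\omega), x^k}$ for every $k$, so the first part of the corollary applied pointwise with $\mu = \xi(\omega)$ gives $\rho(\xi_n(\omega), \xi(\omega)) \to 0$ on $\Omega^*$. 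For the in-probability case I would appeal to the subsequence principle recalled earlier: given any subsequence $\{\xi_{n_j}\}$, a standard diagonal extraction produces a further subsequence $\{\xi_{n_{j_\ell}}\}$ along which $\bra{\xi_{n_{j_\ell}}, x^k} \to \bra{\xi, x^k}$ almost surely \emph{simultaneously} for every $k$ (extract an almost-surely convergent subsubsequence for $k=0$, then within it for $k=1$, and so on, and pass to the diagonal). Applying the almost-sure version just established to $\{\xi_{n_{j_\ell}}\}$ and invoking the subsequence principle closes the proof.

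The only delicate point, rather than a genuine obstacle, is the simultaneous control of countably many moment functionals on a single full-probability event $\Omega^*$. This is exactly what the countability hypothesis on the convergence-determining class in the preceding theorems was designed to accommodate, so the moment case fits into their framework without further work. No additional ingredients are required.
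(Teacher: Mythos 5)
Your proposal is correct and follows exactly the route the paper has in mind: the paper derives the corollary from the preceding theorem by taking $\cA_\mu = \{x^k\}_{k\ge 0}$ as the countable convergence-determining class, and your pointwise argument on the full-measure set $\Omega^*$ together with the diagonal extraction for the in-probability case is precisely the ``analogous'' argument the paper alludes to for the random-limit generalization.
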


For convergence in distribution, the following result is analogous to the one for random measures (\cite[Theorem~4.2]{Kallenberg}).
\begin{theorem}
	The sequence $\{\xi_n\}$ converges to $\xi$ in distribution, if and only if $\lf \xi_n, f\rt$ converges in distribution to $\lf \xi, f \rt$ for all $f \in C_b(\R)$.
\end{theorem}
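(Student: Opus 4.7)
The forward implication is immediate: for each $f \in C_b(\R)$ the functional $\Phi_f(\mu) := \bra{\mu, f}$ is continuous on $\cP(\R)$ in the weak topology (by the very definition of that topology), so the continuous mapping theorem transfers $\xi_n \dto \xi$ to $\bra{\xi_n, f} \dto \bra{\xi, f}$. The real content is the converse, which I would prove via a Cram\'er--Wold plus tightness plus subsequence-identification scheme, parallel to Kallenberg's treatment of random measures \cite{Kallenberg}.

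\emph{Step 1 (Cram\'er--Wold).} For any $f_1, \dots, f_k \in C_b(\R)$ and $c_1, \dots, c_k \in \R$, linearity yields $\sum_i c_i \bra{\xi_n, f_i} = \bra{\xi_n, \sum_i c_i f_i}$, which by hypothesis converges in distribution to $\sum_i c_i \bra{\xi, f_i}$. The Cram\'er--Wold device then upgrades this to full joint convergence $(\bra{\xi_n, f_1}, \dots, \bra{\xi_n, f_k}) \dto (\bra{\xi, f_1}, \dots, \bra{\xi, f_k})$ in $\R^k$ for every finite collection.

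\emph{Step 2 (Tightness).} For $R > 0$ pick $\phi_R \in C_b(\R)$ with $0 \le \phi_R \le 1$, $\phi_R \equiv 1$ on $[-R, R]$ and $\phi_R \equiv 0$ off $[-2R, 2R]$; then $\xi_n([-2R, 2R]) \ge \bra{\xi_n, \phi_R}$, and by hypothesis $\bra{\xi_n, \phi_R} \dto \bra{\xi, \phi_R}$, while $\bra{\xi, \phi_R} \uparrow 1$ almost surely as $R \to \infty$ since $\xi$ is a probability measure. Given $\varepsilon > 0$ and $k \in \N$, I would choose $R_k$ so large that $\Prob(\bra{\xi, \phi_{R_k}} > 1 - 1/k) > 1 - \varepsilon 2^{-k-1}$, and then use distributional convergence (perturbing the threshold to a continuity point if necessary) to secure the same inequality for $\bra{\xi_n, \phi_{R_k}}$ uniformly in large $n$, handling the finitely many small $n$ by further enlarging $R_k$ since each individual $\xi_n$ is itself tight. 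The set $K_\varepsilon := \{\mu \in \cP(\R) : \mu([-2R_k, 2R_k]) \ge 1 - 1/k \text{ for all } k\}$ is weakly compact in $\cP(\R)$ and satisfies $\Prob(\xi_n \in K_\varepsilon) > 1 - \varepsilon$ uniformly in $n$, giving tightness of $\{\text{Law}(\xi_n)\}$ in $\cP(\cP(\R))$.

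\emph{Step 3 (Identification).} By Prokhorov's theorem, every subsequence of $\{\xi_n\}$ admits a further subsequence $\xi_{n'} \dto \eta$ for some random probability measure $\eta$. The forward direction yields $\bra{\xi_{n'}, f} \dto \bra{\eta, f}$ for each $f \in C_b(\R)$, so combined with the hypothesis and Step 1 the finite-dimensional laws of $(\bra{\eta, f_1}, \dots, \bra{\eta, f_k})$ and $(\bra{\xi, f_1}, \dots, \bra{\xi, f_k})$ coincide for every $k$ and every $f_1, \dots, f_k \in C_b(\R)$. Taking the $f_i$ from a countable convergence-determining class $\cA \subset C_b(\R)$ (for instance the Stieltjes kernels $\{f_z\}_{z \in D}$ already exhibited in this appendix) makes the cylinder sets $\{\mu : (\bra{\mu, f_1}, \dots, \bra{\mu, f_k}) \in B\}$, $f_i \in \cA$, generate the Borel $\sigma$-algebra on $\cP(\R)$, so $\eta$ and $\xi$ have the same law on $\cP(\cP(\R))$; every subsequential limit being $\xi$ in distribution forces $\xi_n \dto \xi$. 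The main obstacle is Step 2, converting pointwise-in-$f$ distributional convergence into uniform-in-$n$ tail control for $\xi_n$; Step 3 only requires the routine observation that a countable convergence-determining class in $C_b(\R)$ lifts to a measure-determining class of cylinder functionals on $\cP(\R)$.
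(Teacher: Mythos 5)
The paper does not actually prove this theorem; it simply cites \cite[Theorem~4.2]{Kallenberg} as the analogous statement for random measures. So there is no in-paper argument to compare against, and your proposal is a genuine self-contained proof of a fact the authors leave as a reference. Your scheme (Cram\'er--Wold for finite-dimensional convergence, tightness of the laws on $\cP(\cP(\R))$, Prokhorov plus identification of subsequential limits) is the standard one for this kind of statement and is essentially correct. A few remarks on where you glide over details. In Step~2, you correctly recognise that the hard content of the converse is uniform tail control, and the construction of $K_\varepsilon$ works: one should spell out that for each $k$ one first picks $R_k$ large enough so that $\Prob(\bra{\xi,\phi_{R_k}} \le 1 - 1/k)$ is small, then applies the Portmanteau inequality to the closed set $\{y \le 1-1/k\}$ (adjusting the threshold to a continuity point of $\bra{\xi,\phi_{R_k}}$ if needed) to get the same bound for $\limsup_n$, and finally enlarges $R_k$ to cover the finitely many initial $n$ using the fact that each $\xi_n$ is almost surely a probability measure so $\Prob(\xi_n([-R,R]) < 1 - 1/k) \to 0$ as $R\to\infty$ for fixed $n$. (Your phrase ``each individual $\xi_n$ is itself tight'' should be read as: the single law $\mathrm{Law}(\xi_n)$ on the Polish space $\cP(\R)$ is tight, or more concretely the preceding observation.) In Step~3, the assertion that cylinder sets built from a countable separating (or convergence-determining) class $\cA \subset C_b(\R)$ generate the Borel $\sigma$-algebra on $\cP(\R)$ is true but not entirely routine: it rests on the Lusin--Suslin theorem (a continuous injection between Polish spaces is a Borel isomorphism onto its image), applied to $\mu \mapsto (\bra{\mu,f})_{f\in\cA} \in \R^{\cA}$, where injectivity is exactly the separating property. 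This is a standard fact in the theory of random measures, but worth flagging since the ``routine observation'' you mention actually carries the measure-theoretic weight of the identification step. With these details filled in, the proof is correct.
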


Let $\xi$ be a random probability measure. Then the mean of $\xi$, denoted by $\bar \xi$, is defined as 
\[
	\bar \xi(B) = \E[\xi(B)], \text{ for all } B \in \cB(\R).
\]
In can be also defined as a probability measure $\mu$ such that 
\[
	\lf \mu, f\rt = \E[\lf \xi, f\rt], \text{ for all } f \in C_b(\R).
\]
The above equation still holds for all non negative functions $f$, or even for all measurable functions $f$ such that $\E[\lf \xi, f \rt] < \infty$. It is clear that the almost sure convergence implies the convergence in probability which further implies the convergence in distribution. Suppose that $\{\xi_n\}$ converges in distribution to $\xi$. Then the sequence of mean measures $\{\bar \xi_n\}$ converge weakly to $\bar \xi$, the mean of $\xi$. Indeed, let $f$ be a bounded continuous function, $|f(x)| \le M$ for all $x \in \R$. Since $\bra{\xi_n, f}$ converges to $\bra{\xi, f}$ in distribution and $|\bra{\xi_n, f}| \le M$ for all $n$, it follows that 
\[
	\E[\bra{\xi_n, f}] \to \E[\bra{\xi, f}] \text{ as } n \to \infty,
\]
by the bounded convergence theorem. This means that $\bra{\bar \xi_n, f}$ converges to $\bra{\bar \xi, f}$ for any bounded continuous function $f$, which implies that $\bar \xi_n$ converges weakly to $\bar \xi$ as $n \to \infty$.




\end{document}